\newenvironment{itemise}{\begin{itemize}}{\end{itemize}}
\newcommand\cal{\mathcal}
\newcommand\bb{\mathbb}
\theoremstyle{plain}
\theoremstyle{definition}
\theoremstyle{remark}
\newtheorem*{rem}{Remark}
\theoremstyle{plain}
\DeclareMathOperator{\Hom}{Hom}
\DeclareMathOperator{\length}{length}
\DeclareMathOperator{\rank}{rank}
\DeclareMathOperator{\Spec}{Spec}
\DeclareMathOperator{\Proj}{Proj}
\DeclareMathOperator{\sheafhom}{\mathscr{H}\text{\kern -3pt {\calligra\large om}}\,}
\DeclareMathOperator{\sheafder}{\mathscr{D}\text{\kern -3pt {\calligra\large er}}\,}
\DeclareMathOperator{\codim}{codim}
\DeclareMathOperator{\perf}{perf}
\DeclareMathOperator{\red}{red}
\DeclareMathOperator{\Der}{Der}
\def\bP{\mathbb{P}}
\def\Ker{\text{Ker}}
\def\Im{\text{Im}}
\let\phi\varphi
\author{Lena Ji}
\address{Department of Mathematics\\
Princeton University\\
	Fine Hall\\
	Washington Road \\
	Princeton, NJ 08544 \\
	USA}
\email{lji@math.princeton.edu}
\author{Joe Waldron}
\address{Department of Mathematics\\
Michigan State University\\
619 Red Cedar Road\\
East Lansing, MI 48824 \\
USA}
\email{waldro51@msu.edu}
\begin{document}

\title[Geometrically non-reduced varieties]{Structure of geometrically non-reduced varieties}
%\author{Lena Ji and Joe Waldron}

\begin{abstract}
	We prove a structural result for geometrically non-reduced varieties and give applications to Fano varieties. For example, we show that if $X$ is the generic fibre of a Mori fibre space of relative dimension $n$, and the characteristic is $p>2n+1$, then any geometric non-reducedness of $X$ comes from the base of some fibration.
\end{abstract}

\maketitle

\tableofcontents

\section{Introduction}

Given a fibration $f\colon \EuScript{X}\to S$ of smooth varieties over an algebraically closed field $k$, the generic fibre $X$ is a regular variety over the function field $K=K(S)$. In characteristic 0 this implies that $X$ is smooth over $K$.  On the other hand, a key difficulty in positive characteristic comes from the existence of regular varieties $X$ over $K$ for which $X\otimes_{K}\overline{K}$ is singular.  This corresponds to a fibration  $f$ with every fibre singular; the most familiar examples of such objects are the quasi-elliptic fibrations which occur in the Bombieri--Mumford classification of smooth surfaces in characteristics $2$ and $3$.  Furthermore, Example \ref{exmp:pm_fermat_example} gives regular Fano varieties in characteristic $p<\dim X+2$ for which $X\otimes_{K}\overline{K}$ is non-reduced.  Such a variety is \emph{geometrically non-reduced}.

These fibrations cause difficulties when applying standard methods of characteristic zero birational geometry, for example fibre space adjunction \cite{ejiri_positivity_2016} and semi-positivity  \cite{patakfalvi_semi-positivity_2014-1}.  Therefore a key part of extending birational geometry to higher dimensions in positive characteristic will be to understand and control the occurrence of these fibrations.

One way to do this is to compare a geometrically non-reduced variety $X$ over $K$ to the variety $Y$ over $\overline{K}$ which one obtains by taking the normalisation of the maximal reduced subscheme of $X\otimes_K \overline{K}$ with natural morphism $\phi\colon Y\to X$.   

The earliest result of this kind was the genus change formula of Tate \cite{tate}, which implies that if $X$ is a regular curve and $X\otimes_K\overline{K}$ is reduced, there is  an effective Weil divisor $C$ such that
\[K_Y+(p-1)C\sim \phi^*K_X.
\]

This formula was recently extended to arbitrary dimensional normal varieties, including the geometrically non-reduced case, by Patakfalvi and the second author \cite{pw}.   This enabled them to prove for example that regular del Pezzo surfaces over a field of characteristic $p\geq 11$ are smooth, and hence techniques such as those of \cite{patakfalvi_semi-positivity_2014-1} can be applied to del Pezzo fibrations of relative dimension $2$ in these characteristics.   Further applications of this formula to del Pezzo surfaces have appeared in \cite{tanaka2019}, \cite{bernasconi_tanaka}, and \cite{tanaka_boundedness}.  We discuss applications of the present article to Fano varieties (including higher dimensional ones)  in \autoref{sub:introduction_Fano}.

\subsection{Canonical bundle formula}

One of the main aims of the present article is to reveal the geometric significance of the Weil divisor $C$ which appeared in the formula above. That is, we determine a canonical linear system $\frak C\subseteq |C|$ which is intricately connected with the non-normal singularities of $X\otimes_K K^{1/p}$.

\begin{theorem}\label{main_theorem}
	Let $K$ be the function field of a variety over a perfect field of characteristic $p>0$. Let $X$ be a projective normal variety over $K$, and suppose that $H^0(X,\cal O_X)=K$.  Let $L$ be a field lying between $K$ and $K^{{1}/{p}}$, 
	and let $Y$ be the normalisation of $(X\otimes_K L)_{\red}$ with morphism $\phi\colon Y\to X$.
	
	Then there is a canonically determined linear system $\frak{C}$ of Weil divisors with fixed part $\frak{F}$ and movable part $\frak{M}$ such that
	\[K_Y+(p-1)\frak{C}\sim \phi^*K_X.
	\]

Furthermore, $\frak{F}$ and $\frak{M}$ satisfy the following properties:

\begin{enumerate}
	\item\label{F_normality} $\frak{F}=0$ if and only if $(X\otimes_K L)_{\red}$ is regular in codimension $1$.
	\item\label{F_support} $\frak{F}$ has support equal to that of the conductor divisor of $Y\to (X\otimes_KL)_{\red}$.
	\item\label{M_reducedness} $\frak M=0$ if and only if $X\otimes_K L$ is reduced.
\end{enumerate}
	Let $f\colon X\dashrightarrow V$ be the Stein factorisation of the rational map induced by $\frak{M}$ and let $W$ be the normalisation of $(V\otimes_KL)_{\red}$.  
	\begin{enumerate}
	\setcounter{enumi}{3}
	\item\label{fibre_reduced}If $X_{\xi}$ is the generic fibre of $f$ then $X_\xi\otimes_{K(V)} K(W)$ is reduced.
	
	\item\label{maximal_reduced}  If $g\colon X\dashrightarrow Z$ is a rational map with generic fibre $X_{\eta}$ such that $X_\eta\otimes_{K(Z)} K((Z\otimes_K L)_{\red})$ is reduced, then there is a factorisation $X\dashrightarrow Z\dashrightarrow V$.

	\item\label{pullback_from_w} If $\frak{M}_W$
	is the linear system on $W$ associated to the base change $V\otimes_K L$, then there is a natural map $g\colon Y\dashrightarrow W$ and $\frak{M}=g^*\frak{M}_W$.
	\end{enumerate}
	
\end{theorem}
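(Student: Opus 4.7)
The plan is to build $\mathfrak{C}$ in accordance with the two-stage factorisation $\phi\colon Y\xrightarrow{\nu}(X\otimes_K L)_{\mathrm{red}}\xrightarrow{\iota}X$, with the fixed part $\mathfrak{F}$ arising from the normalisation $\nu$ and the movable part $\mathfrak{M}$ arising from the purely inseparable base change $\iota$. Concretely, I would take $\mathfrak{F}$ to be a canonical multiple of the conductor divisor of $\nu$, scaled so that it contributes precisely to the $(p-1)C$ term in the Patakfalvi--Waldron formula. The movable part $\mathfrak{M}$ I would construct as a linear system whose members are indexed by sections of an explicit sheaf on $X$ that controls the nilpotent ideal of $\mathcal{O}_{X\otimes_K L}$; each such section determines an effective Weil divisor on $Y$ via pullback and the Frobenius descent corresponding to $L\subseteq K^{1/p}$. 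One then verifies that each member of the resulting linear system is genuinely divisible by $p-1$ and that, together with the representative of $\mathfrak{F}$, it satisfies $K_Y+(p-1)(\mathfrak{F}+\mathfrak{M})\sim\phi^*K_X$.

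Properties (1)--(3) should follow almost immediately from the construction. Parts (1) and (2) are built into the definition of $\mathfrak{F}$ as the conductor: it vanishes precisely when $\nu$ is an isomorphism in codimension one, equivalently when $(X\otimes_K L)_{\mathrm{red}}$ is regular in codimension one, and its support is the conductor locus by design. For (3), $\mathfrak{M}=0$ exactly when the indexing sheaf is trivial, which happens precisely when the nilradical of $X\otimes_K L$ vanishes, i.e., when $X\otimes_K L$ is reduced. For properties (4)--(6), let $f\colon X\dashrightarrow V$ be the Stein factorisation associated to $\mathfrak{M}$. Along the generic fibre $X_\xi$ of $f$, $\mathfrak{M}$ restricts to a trivial linear system by the universal property of the Stein factorisation, which should force the corresponding nilradical on $X_\xi\otimes_{K(V)}K(W)$ to vanish, giving (4). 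The universal property (5) follows by applying the same construction to the hypothetical $g$: its assumption forces the induced movable part to be trivial on fibres of $g$, so the contraction of $\mathfrak{M}$ factors through $g$, yielding the desired factorisation via Stein's universality. Property (6) is then a naturality statement: the construction of $\mathfrak{M}$ commutes with pulling back along the purely inseparable base change $V\otimes_K L$.

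The main difficulty is the canonical construction of $\mathfrak{C}$ itself. The formula of \cite{pw} only pins down the linear equivalence class of $C$, and extracting a whole canonical sub-linear-system (in particular, showing that every member is divisible by $p-1$ as a divisor and not merely in class) requires a new local analysis of the Frobenius structure at codimension one points of $Y$. The key technical step will likely be a careful study of how the nilpotent ideal of $\mathcal{O}_{X\otimes_K L}$ interacts with the normalisation $\nu$ at codimension one, organised into a globally coherent sheaf on $X$ whose sections give the desired linear system on $Y$. Once this is in place, the universal properties (4)--(6) concerning the Stein factorisation should follow in a comparatively formal manner, by applying the construction of $\mathfrak{M}$ in families over the base.
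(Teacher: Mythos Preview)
Your proposal has the right intuition about the two-stage factorisation, but the actual construction in the paper is quite different and your version would not go through as stated. The paper builds $\mathfrak{C}$ via the theory of foliations: since $\phi\colon Y\to X$ has height one, it corresponds to a foliation $\mathcal{F}\subset\mathcal{T}_Y$, and Ekedahl's formula gives $\omega_{Y/X}\cong(\det\mathcal{F})^{\otimes(p-1)}$, so the factor $(p-1)$ is automatic rather than something to be verified coefficient by coefficient. The key construction is then an injection $\mathcal{F}\hookrightarrow g^*\mathcal{T}_{L/K}$ (pullback of the tangent sheaf of the base extension), with saturation $\mathcal{F}'$; the divisor $\mathfrak{F}$ is the canonical section of $\det(\mathcal{F}'/\mathcal{F})$, and $\mathfrak{M}$ is the linear system on $\det\mathcal{Q}$ induced by the surjection $g^*\mathcal{T}_{L/K}\twoheadrightarrow\mathcal{Q}:=g^*\mathcal{T}_{L/K}/\mathcal{F}'$. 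In particular, $\mathfrak{M}$ is indexed not by sections of a sheaf on $X$ controlling the nilradical, but by $\bigwedge^r\Omega_{L/K}$, i.e.\ by differentials of the base field extension; this is what produces the rational map to a Grassmannian over $\operatorname{Spec}L$ and hence the variety $W$.

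Two concrete points where your plan breaks: first, $\mathfrak{F}$ is \emph{not} a multiple of the conductor divisor. The paper identifies $\mathcal{F}=\Omega_{Y/X}^\vee$ and $\mathcal{F}'=(\nu^*\Omega_{Z/X})^\vee$, whence $\mathfrak{F}=\sum_D\operatorname{length}((\Omega_{Y/Z})_{\xi_D})\,D$; this has the same \emph{support} as the conductor (Lemma~4.4) but not the same coefficients, so defining $\mathfrak{F}$ as a scalar multiple of the conductor will not satisfy the canonical bundle formula. Second, your argument for part~(3) (``$\mathfrak{M}=0$ iff the nilradical vanishes'') is far too quick in the non-reduced $\Rightarrow$ $\mathfrak{M}\neq 0$ direction. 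The paper needs a substantial intermediate result here, the \emph{essential part} of the extension (Proposition~5.5): one constructs a minimal subextension $K\subset L'\subset L$ with $H^0(Z',\Omega_{Z'/X}^\vee)=0$ such that all the non-reducedness already appears over $L'$, by showing that $h_*\Omega_{Z/X}^\vee$ is a $p$-closed foliation on $\operatorname{Spec}L$. Only after reducing to this case does the argument that $\mathfrak{M}\neq 0$ become tractable. Your sketch of (4)--(6) likewise underestimates the work: (4) requires showing $\mathfrak{M}_{W/V}=0$ by a separate Picard-rank and global-generation argument, not merely by restriction.
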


In summary,  the first two points tell us that the fixed part $\frak{F}$ measures the failure of normality of $(X\otimes_K L)_{\red}$, while the next three tell us that the movable part $\frak{M}$ measures the failure of reducedness of $X\otimes_K L$.  In particular, the last two points give a universal property satisfied by $f$:  it is the deepest of all morphisms whose generic fibres stay reduced under the base change $L/K$.

It should be noted that $\frak C$ may not be, and often is not, equal to the complete linear system $|C|$.  However, we can use it to read some consequences from knowledge of $|C|$.
In the following corollaries, let $X$ be a normal variety over an arbitrary field $K$ with $H^0(X,\cal O_X)=K$, and $f\colon Y\to X$ be the normalised base change by $L/K$ for an arbitrary field extension $L$.

\begin{corollary}
	If $C$ is fixed, then $X\otimes_K L$ is reduced.
	\end{corollary}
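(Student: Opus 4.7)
The plan is to deduce the corollary as a short consequence of Theorem \ref{main_theorem}, specifically property (\ref{M_reducedness}), exploiting the fact that the canonical linear system $\mathfrak{C}$ is by construction a sub-linear-system of $|C|$.

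First, I would reduce to the setting of Theorem \ref{main_theorem}.  In characteristic $0$, $X$ is geometrically normal and there is nothing to prove, so one may assume $\chara K = p > 0$.  Since reducedness is preserved under separable base change, we may also assume $L/K$ is purely inseparable.  A filtration argument through the tower $K \subseteq K^{1/p} \subseteq K^{1/p^2} \subseteq \cdots$ then reduces the problem to the case $L \subseteq K^{1/p}$ that is directly covered by the main theorem.

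Once inside this setup, the argument is immediate.  The hypothesis $\dim |C|=0$ (``$C$ is fixed'') together with the inclusion $\mathfrak{C}\subseteq |C|$ forces $\dim \mathfrak{C}=0$, so the movable part $\mathfrak{M}$ of $\mathfrak{C}$ must be the zero divisor.  By property (\ref{M_reducedness}), this is equivalent to $X\otimes_K L$ being reduced, which is exactly the desired conclusion.

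The principal obstacle I anticipate is the reduction of the hypothesis through the tower of $p$-th roots, since the divisor $C$ genuinely depends on the choice of $L$: one must check that ``$C$ fixed'' at the top of the tower propagates to each intermediate layer so that the main theorem can be applied one step at a time.  For the core case $L\subseteq K^{1/p}$ the proof collapses to a single line built from the inclusion $\mathfrak{C}\subseteq |C|$ and property (\ref{M_reducedness}); no further input is needed.
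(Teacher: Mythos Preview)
Your proposal is correct and matches the paper's intended argument. The paper does not spell out a proof of this corollary; it is presented as an immediate consequence of \autoref{main_theorem} via the observation $\mathfrak{C}\subseteq |C|$ together with part \autoref{M_reducedness}, which is exactly your core step, with the extension to arbitrary $K$ and $L$ handled by the reduction lemmas of \autoref{arbitrary_fields}. Your flagged obstacle---propagating ``$|C|$ fixed'' down a height-one tower---is indeed the only point needing care, and it follows from the additivity $C\sim C'+\psi^*C_1$ in towers (as in \cite{pw} and \autoref{lem:compare_Qs}): if $|C_1|$ moved on an intermediate $Y_1$, pulling back a pencil and adding $C'$ would move $|C|$ on $Y$, contradicting fixedness.
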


\begin{corollary}\label{C_empty}
	$|C|=\emptyset$ if and only if $X\otimes_K L$ is normal.
	\end{corollary}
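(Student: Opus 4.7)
My plan is to unfold the condition $|C|=\emptyset$ as the assertion that the canonical linear system $\frak{C}$ of \autoref{main_theorem} is trivial, i.e.\ both $\frak{F}=0$ and $\frak{M}=0$.  By property~\eqref{F_normality}, $\frak{F}=0$ is equivalent to $(X\otimes_K L)_{\red}$ being regular in codimension one, and by property~\eqref{M_reducedness}, $\frak{M}=0$ is equivalent to $X\otimes_K L$ being reduced.  Thus $|C|=\emptyset$ is equivalent to $X\otimes_K L$ being simultaneously reduced and regular in codimension $1$.

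To upgrade ``reduced plus $R_1$'' to normality, I would invoke Serre's criterion, for which I need the $S_2$ condition.  Since $X$ is normal it is $S_2$, and because $L/K$ is a flat (indeed free) extension, with the algebraic base-change fibres $\Spec(\kappa(x)\otimes_K L)$ being zero-dimensional and therefore trivially $S_2$, the $S_2$ property passes to $X\otimes_K L$.  Combined with $R_1$ and reducedness this yields normality, giving the forward implication; conversely, if $X\otimes_K L$ is normal then it is in particular reduced and $R_1$, forcing $\frak{F}$ and $\frak{M}$ to vanish by the same two properties and hence $|C|=\emptyset$.

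For an arbitrary extension $L/K$ not a priori lying in the range required by \autoref{main_theorem}, I would first split off the separable part of $L/K$, over which normality is preserved and no correction divisor arises, and then factor the purely inseparable remainder through the tower $K\subset K^{1/p}\subset K^{1/p^2}\subset\cdots$, applying \autoref{main_theorem} one step at a time and composing the resulting correction divisors.  The main obstacle I anticipate is the careful bookkeeping of the linear system $\frak{C}$ through this tower, and verifying that the vanishing condition $|C|=\emptyset$ is genuinely preserved under such a composition rather than being lost in some intermediate step.
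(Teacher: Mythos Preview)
Your proposal is correct and is precisely the argument the paper has in mind: the paper does not spell out a proof, presenting the corollary as an immediate consequence of \autoref{main_theorem} and noting that it had already appeared in \cite{tanaka2019} as a consequence of \cite{pw}. Your deduction from properties~\eqref{F_normality} and~\eqref{M_reducedness} together with Serre's criterion and preservation of $S_2$ under flat base change is exactly what is intended, and the reduction for general $L/K$ that you outline in your last paragraph is handled in the paper by the lemmas of \autoref{arbitrary_fields}.
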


Note that \autoref{C_empty} appeared recently in \cite{tanaka2019}, as a consequence of \cite{pw}, but we mention it here for completeness.

\subsection{Essential part of a base change}

On the way to \autoref{main_theorem} we construct a particular subextension $L/L'/K$ which we call the \emph{essential part} of the base change.  This is the universally smallest subextension which reveals all the non-reducedness of $X\otimes_K L$, in the following precise sense:

\begin{theorem}[\autoref{trick}]\label{thm:trick}
	
	Let $K$ be the function field of a variety over a perfect field of characteristic $p>0$. Let $X$ be a projective normal variety over $K$, and suppose that $H^0(X,\cal O_X)=K$. Let $L$ be a field extension such that $K^{1/p}\supseteq L\supseteq K$.

	There exists a unique subextension $L/L'/K$ such that the varieties
	\[	\xymatrix{
		Z=(X\otimes_K L)_{\red}\ar[r]\ar[d] & Z'=(X\otimes_K L')_{\red}\ar[r]\ar[d] & X\ar[d]\\
		\Spec(L)\ar[r] & \Spec(L')\ar[r] &\Spec(K)	
	}
	\]
	satisfy:
	\begin{enumerate}
		
		\item\label{itm:same_deg}
			All of the following properties, which are equivalent for any such diagram:
			\begin{enumerate}
				\item 
		$Z\to Z'$ and $\Spec(L)\to \Spec(L')$ are morphisms of the same degree.
		\item   $Z'\otimes_{L'}L$ is reduced.
		\item  $Z'\otimes_{L'} L\cong Z$.
		\end{enumerate}
		\item \label{itm:subext_factor} If $L/L''/K$ is a subextension satisfying the properties \autoref{itm:same_deg}, then $L'\subseteq L''$.
		
			\item\label{itm:omega_zero}
		$H^0(Z', \Omega_{Z'/X}^\vee )=0$.
		
	\end{enumerate}
Furthermore, $L'$ is characterised as the  unique subextension satisfying both \autoref{itm:same_deg} and \autoref{itm:subext_factor}, and also as the unique subextension satisfying both \autoref{itm:same_deg} and \autoref{itm:omega_zero}
\end{theorem}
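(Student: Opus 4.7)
The plan is to prove the equivalence of the sub-conditions in \autoref{itm:same_deg}, construct $L'$ as the minimum of the set $\mathcal{S}$ of subextensions satisfying \autoref{itm:same_deg}, and identify this minimum via \autoref{itm:omega_zero}. The uniqueness characterisations at the end of the theorem then follow formally: \autoref{itm:subext_factor} forces $L' = \min \mathcal{S}$, and \autoref{itm:omega_zero} will also single out this same minimum, since any strictly larger element of $\mathcal{S}$ produces nonzero sections by the argument below.

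For the equivalence in \autoref{itm:same_deg}, factor $Z \to Z'$ as $Z \hookrightarrow Z' \otimes_{L'} L \to Z'$, where the first map realises $Z$ as the reduction of $Z' \otimes_{L'} L$ inside $X \otimes_K L$ and the second has degree $[L:L']$. At the generic point of $Z'$, $K(Z') \otimes_{L'} L$ has $K(Z')$-dimension $[L:L']$, and its reduction has $K(Z')$-dimension $[K(Z):K(Z')]$, so degree equality is equivalent to reducedness of $Z' \otimes_{L'} L$ at its generic point. The latter upgrades to full reducedness because $Z'$ is $(S_1)$ and the base change is flat, giving (a) $\Leftrightarrow$ (b). When reduced, $Z' \otimes_{L'} L$ must coincide with $Z$, as both are reduced closed subschemes of $X \otimes_K L$ sharing the common underlying topological space of $|X|$, giving (b) $\Leftrightarrow$ (c).

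To construct $L'$, I show $\mathcal{S}$ is closed under intersection. Since $L \subseteq K^{1/p}$, any two subextensions $L_1, L_2 \subseteq L$ are linearly disjoint over $L_0 := L_1 \cap L_2$ inside $L$, giving an injection $L_1 \otimes_{L_0} L_2 \hookrightarrow L$. A generic-point calculation then descends the reducedness of $Z_i \otimes_{L_i} L$ for $i = 1, 2$ to reducedness of $Z_0 \otimes_{L_0} L$, placing $L_0 \in \mathcal{S}$. Taking the intersection of all elements of $\mathcal{S}$ produces the minimum $L' \in \mathcal{S}$, yielding both existence of $L'$ and property \autoref{itm:subext_factor}.

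Finally, since $X$ is projective normal with $H^0(X, \cal O_X) = K$ and $Z' \to X$ is a universal homeomorphism, one has $H^0(Z', \cal O_{Z'}) = L'$; a global section $\partial \in H^0(Z', \Omega_{Z'/X}^\vee)$ is an $\cal O_X$-linear derivation of $\cal O_{Z'}$ whose restriction to global sections lies in $\Der_K(L', L')$. If this restriction $\partial_0$ is nonzero, the Jacobson correspondence for the exponent-$1$ extension $L'/K$ produces the proper fixed subfield $L'' = \ker(\partial_0) \subsetneq L'$; a generic-point analysis then shows $L'' \in \mathcal{S}$, contradicting the minimality of $L'$, and a further local argument forces $\partial$ itself to vanish. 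Conversely, any $L'' \in \mathcal{S}$ strictly containing $L'$ yields, via the Jacobson correspondence, a nonzero $K$-derivation of $L''$ vanishing on $L'$, which globalises to a nonzero section of $\Omega^\vee_{Z''/X}$. The main obstacle is the claim that $\ker(\partial_0) \in \mathcal{S}$: one must translate the field-theoretic fixed field of $\partial_0$ into a genuine geometric descent $Z' \cong (X \otimes_K L'')_{\red} \otimes_{L''} L'$, compatibly with the reduction operator on base changes. At the generic point this is a Jacobson-type calculation, but globally it requires $Z'$ to inherit the correct descent data, which is the technical heart of the argument.
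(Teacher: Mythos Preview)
Your proof has a genuine gap in the closure-under-intersection step. The claim that any two subextensions $L_1, L_2$ of $K^{1/p}/K$ are linearly disjoint over $L_0 = L_1 \cap L_2$ is false. For a counterexample in characteristic $2$, take $K = \mathbb{F}_2(a,b,c)$ with $x=a^{1/2},\, y=b^{1/2},\, z=c^{1/2}$, and set $L_1 = K(y,z)$, $L_2 = K(x,\, y+xz)$. One checks $L_1\cap L_2 = K$ while $L_1L_2 = K^{1/2}$, so $[L_1L_2:K]=8 < 16 = [L_1:K][L_2:K]$. In foliation language, $L_i$ is the fixed field of a $p$-closed derivation, but the span of the two derivations is not $p$-closed; the Jacobson correspondence gives $L_0$ via the $p$-closure of the sum, which can be strictly larger. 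Even granting linear disjointness, the ``generic-point calculation'' descending reducedness of $Z_i\otimes_{L_i}L$ to $Z_0\otimes_{L_0}L$ is not spelled out, and it is not clear how the injection $L_1\otimes_{L_0}L_2\hookrightarrow L$ alone would yield it.

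The paper avoids this entirely by reversing the logic: rather than proving $\mathcal{S}$ is closed under intersection and taking the minimum, it constructs $L'$ directly as the quotient of $L$ by the foliation $\mathcal{G} = h_*(\Omega_{Z/X}^\vee)\subset \mathcal{T}_{L/K}$, after verifying this pushforward is saturated and $p$-closed. The same foliation, pulled back to $Z$, cuts out $Z'$ with matching degree, and then any $L''\in\mathcal{S}$ is shown to satisfy $\mathcal{T}_{L/L''}\subseteq\mathcal{G}$, whence $L'\subseteq L''$. Property \eqref{itm:omega_zero} then drops out: a nonzero section of $\Omega_{Z'/X}^\vee$ would let one iterate the construction and find a strictly smaller element of $\mathcal{S}$. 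Note also that the injectivity $H^0(Z',\Omega_{Z'/X}^\vee)\hookrightarrow \mathrm{Der}_K(L')$ (needed to pass from $\partial$ to $\partial_0$) is exactly this embedding $h'_*(\Omega_{Z'/X}^\vee)\hookrightarrow\mathcal{T}_{L'/K}$, which uses $h'_*\mathcal{O}_{Z'}=L'$; you gesture at this but should make it explicit. Your overall architecture (existence of a minimum, then \eqref{itm:omega_zero} by contradiction) is sound, but the intersection step must be replaced by the foliation construction or an equivalent direct argument.
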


Note that it is possible to define the essential part for a field extension of arbitrary height by iteratively applying the above construction.

\subsection{Fano varieties}\label{sub:introduction_Fano}

Our investigation of the structure of geometrically non-reduced varieties is ultimately motivated by an ongoing investigation of the structure of geometric singularities of Fano varieties over imperfect fields begun in \cite{pw}.  Over a fixed algebraically closed field of characteristic zero, it is known that regular Fano varieties of fixed dimension form a bounded family by \cite{kollar_rational_1992}.  Furthermore, it is known that if one allows certain controlled singularities  (so called $\varepsilon$-lc for fixed $\varepsilon$) then the same remains true by a theorem of Birkar \cite{birkar_singularities_2016} (once known as the Borisov--Alexeev--Borisov conjecture).   It is an interesting question whether these statements extend to positive characteristic, and to varieties over non-closed fields.  A consequence of this would be that regular Fano varieties of dimension $n$ should exhibit pathologies of positive characteristic such as geometric non-reducedness only in small characteristics depending on $n$.   Conversely, ruling out such pathologies directly can be a useful step towards proving boundedness results.   Of particular interest are those Fano varieties over non-algebraically closed fields which satisfy $\rho=1$, for these occur as generic fibres of Mori fibre spaces over algebraically closed fields.

The following theorem is a result in this direction.   
It implies that if $X$ is a generic fibre of a Mori fibre space, in a characteristic which is large compared to $\dim(X)$, $X$ can only fail to be geometrically reduced due to the base of some fibration.

\begin{theorem}\label{thm:MFS}
	Let $K$ be the function field of a variety over a perfect field of characteristic $p>0$. Let $X$ be a variety over $K$, and assume the following:
	\begin{itemise}
		\item $X$ is normal, $\mathbb{Q}$-factorial and $\rho(X)=1$,
		\item $X$ is geometrically irreducible over $K$,
		\item there is an effective divisor $B$ such that $K_X+B\equiv 0$, and
		\item if $n=\dim(X)$ then $p>2n+1$.
	\end{itemise}
	
	Then there is a birational morphism $\phi\colon\tilde{X}\to X$ and a contraction $\widetilde{X}\to V$ of relative dimension at least $1$, such that $(\widetilde{X}\otimes_K{K^{1/p}})_{\red}$ is birational to $\widetilde{X}\times_V(V\otimes_K{K}^{1/p})_{\red}$.
\end{theorem}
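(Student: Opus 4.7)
The plan is to apply \autoref{main_theorem} with $L=K^{1/p}$ and extract the desired fibration from the movable part $\mathfrak{M}$. If $X\otimes_K K^{1/p}$ is already reduced, then by property \autoref{M_reducedness} we have $\mathfrak{M}=0$; take $\tilde X=X$ and $V=\Spec K$, so that $\tilde X\times_V (V\otimes_K K^{1/p})_{\red}=X\otimes_K K^{1/p}=(X\otimes_K K^{1/p})_{\red}$, and the statement holds with relative dimension $n\geq 1$. So we may assume $X\otimes_K K^{1/p}$ is non-reduced and $\mathfrak{M}\neq 0$, and denote by $\phi:Y\to X$, $\mathfrak{F}$, $\mathfrak{M}$, $f:X\dashrightarrow V$, and $g:Y\dashrightarrow W$ the data supplied by \autoref{main_theorem}.

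Combining the canonical bundle formula $K_Y+(p-1)(\mathfrak{F}+\mathfrak{M})\sim \phi^*K_X$ with the hypothesis $K_X+B\equiv 0$ yields the numerical identity
$$-K_Y\equiv (p-1)(\mathfrak{F}+\mathfrak{M})+\phi^*B,$$
expressing $-K_Y$ as a numerically effective sum. The aim is to show that after a birational modification $\tilde\phi:\tilde X\to X$ on which $f$ resolves to a morphism $\tilde f:\tilde X\to V$, the map $\tilde f$ has relative dimension at least $1$, equivalently $\dim V<n$.

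The heart of the argument, and its main obstacle, is to exclude the case that $f$ is birational. Since $\rho(X)=1$, every divisor on $X$ is a rational multiple of a fixed ample generator $H$; if $f$ were birational, then $\mathfrak{M}$ would be big on $X$, so $\mathfrak{M}\equiv mH$ with $m>0$, while $-K_X\equiv aH$ for some $a\geq 0$ determined by $B$. Pulling the displayed equivalence back to $Y$ and intersecting against $(\phi^*H)^{n-1}$ converts it into an inequality in which the coefficient $(p-1)$ multiplies the strictly positive term coming from $\mathfrak{M}$. The contributions from $K_Y$, from the conductor-supported divisor $\mathfrak{F}$ (whose components are controlled via property \autoref{F_support}), and from $\phi^*B$, can each be bounded in terms of $n$ using the intersection geometry of $\phi$ and the structure of the singular locus of $(X\otimes_K K^{1/p})_{\red}$. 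The hypothesis $p>2n+1$ is arranged precisely so that these dimensional bounds overcome the $(p-1)$ factor and force a contradiction, ruling out the birational case.

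Once $\dim V<n$ is established, resolve $f$ to a morphism $\tilde f:\tilde X\to V$ via some birational $\tilde\phi:\tilde X\to X$. Property \autoref{fibre_reduced} says the generic fibre of $\tilde f$ remains reduced after the base change $K(W)/K(V)$, while \autoref{pullback_from_w} says the movable part $\mathfrak{M}$ on the normalisation $Y$ is pulled back from $W$. Together these imply that the natural morphism
$$(\tilde X\otimes_K K^{1/p})_{\red}\longrightarrow \tilde X\times_V (V\otimes_K K^{1/p})_{\red}$$
is an isomorphism over the generic point of $V$, hence birational, completing the proof.
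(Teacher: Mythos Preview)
Your overall strategy matches the paper's: apply \autoref{main_theorem} with $L=K^{1/p}$, use the canonical bundle formula to write $-K_Y\equiv (p-1)(\mathfrak{F}+\mathfrak{M})+\phi^*B$ on the normalised base change $Y$, and rule out the possibility that the map defined by $\mathfrak{M}$ is generically finite. The gap is entirely in how you rule this out.

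You propose intersecting the identity against $(\phi^*H)^{n-1}$ and assert that ``the contributions from $K_Y$ \dots\ can each be bounded in terms of $n$''. But no such bound exists: on a $\mathbb{Q}$-factorial variety with $\rho=1$ and ample generator $H$, the numbers $-K_Y\cdot H^{n-1}$ and $\mathfrak{M}\cdot H^{n-1}$ depend on the choice of $H$ and on the geometry of $X$, not on $n$ alone. Writing everything as rational multiples of $H$ just gives an identity among rational numbers with no absolute upper bound on the left and no integral lower bound (such as $1$) on the $\mathfrak{M}$-coefficient. Nothing in properties \autoref{F_support} or in ``the intersection geometry of $\phi$'' supplies either estimate, so the argument as written produces no contradiction.

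What the paper actually uses is Mori's bend and break. One first passes from $Y^{1/p}$ to $Y^{\perf}$ (picking up a further effective divisor $C$ from \cite[Theorem 1.1]{pw}) and then to $Y=Y^{\perf}\otimes_{K^{1/p^\infty}}\overline{K}$, because bend and break requires an algebraically closed ground field; along the way one checks that $\rho=1$ and $\mathbb{Q}$-factoriality persist, so all the effective pieces remain nef. On $Y$ one cuts by general hyperplanes to obtain a curve $D$ in the smooth locus, and picks a point $P\in D$ lying off the supports of $\mathfrak{F}_Y$, $C_Y$, $B_Y$ and over the locus where $g_{\mathfrak{M}}$ is a finite morphism. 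Bend and break then yields a curve $\Gamma$ through $P$ with $-K_Y\cdot\Gamma\leq 2n$. The assumption that $g_{\mathfrak{M}}$ is generically finite lets one choose a member of $\mathfrak{M}$ passing through $P$ but not containing $\Gamma$, forcing $\mathfrak{M}_Y\cdot\Gamma\geq 1$; combined with nefness of the other terms this gives $p-1\leq (p-1)\mathfrak{M}_Y\cdot\Gamma\leq -K_Y\cdot\Gamma\leq 2n$, contradicting $p>2n+1$. The bound $2n$ comes from bend and break and from nothing else; this is the key idea your proposal is missing.
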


Note that there are no assumptions on the coefficients of $B$.  In characteristic zero, if one assumes additionally that $X$ is of Fano type, one would be tempted to prove that $V$ is also of Fano type and proceed by induction on dimension using the LMMP.  However, note that this approach is more difficult in positive characteristic, see the example in \cite[Theorem 1.4]{tanaka_pathologies_2016}.

We can also apply our results to give restrictions on the existence of geometrically non-reduced del Pezzo surfaces in small characteristics, in the style of \cite[Theorem 4.1]{pw}.  In the following corollaries, $X$ will be a regular del Pezzo surface over a field $K=H^0(X,\cal O_X)$ of characteristic $p>0$, which is geometrically non-reduced over $K$.  $Y$ will be the normalisation of $(X\otimes_K \overline{K})_{\red}$.  In these results  we refer to the classification of the possibilities for $(Y,(p-1)C)$ obtained in \cite[Theorem 4.1]{pw}, and the notation used there.

\begin{corollary}\label{cor:dp_hirzebruch}
	$(Y,(p-1)C)$ cannot be of the type $(H_d,D)$.
\end{corollary}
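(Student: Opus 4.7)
The plan is to argue by contradiction: assume $(Y,(p-1)C)$ is of the type $(H_d,D)$ in the classification of \cite[Theorem 4.1]{pw}, and derive an inconsistency by examining the fixed and movable parts of $\frak{C}$ supplied by \autoref{main_theorem}.

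First, I read off $\frak{M}$ and $\frak{F}$ from the form of $D$ on $Y = H_d = \mathbb{F}_d$. Every non-trivial movable linear system on a Hirzebruch surface is (a subsystem of) $|mF|$ where $F$ is a fibre of the ruling $\pi\colon \mathbb{F}_d \to \bP^1$, so $\frak{M}$ must have this shape. Since $X$ is geometrically non-reduced, \autoref{M_reducedness} of \autoref{main_theorem} forces $\frak{M}\neq 0$, so there is a genuine fibre-pencil contribution. The remaining components of $D$ enter $\frak{F}$, and in the $(H_d,D)$ case $\frak{F}$ is known to include at least one component horizontal to $\pi$ --- in particular a section of $\pi$.

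Next, by \autoref{pullback_from_w} of \autoref{main_theorem}, $\frak{M}$ is pulled back via a natural rational map $g\colon Y \dashrightarrow W$ from a linear system $\frak{M}_W$ on $W$. Since $g$ is determined by $\frak{M}$, it coincides with the ruling $\pi$, and $W \cong \bP^1$. By the Stein factorisation statement, $\frak{M}$ also arises as the base change of a fibration $f\colon X \dashrightarrow V$ over $K$, where $V$ is a curve and $W$ is the normalisation of $(V\otimes_K K^{1/p})_{\red}$. By \autoref{fibre_reduced} of \autoref{main_theorem}, the generic fibre $X_\xi$ of $f$ is a regular curve over $K(V)$ whose base change to $K(W)$ is a reduced $\bP^1$, so $X_\xi$ is a geometrically non-reduced conic over $K(V)$.

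The contradiction then comes from \autoref{F_support}: $\frak{F}$ is supported on the conductor divisor of $\phi\colon Y \to (X\otimes_K K^{1/p})_{\red}$, i.e.\ on the preimage of its non-normal locus. The fibration $f$, together with the reducedness of the generic fibre of $X\otimes_K K^{1/p}\to V\otimes_K K^{1/p}$ (coming from \autoref{fibre_reduced}) and the regularity of $X$, forces every non-normal component of $(X\otimes_K K^{1/p})_{\red}$ to come from the base, i.e.\ to be the preimage of a component of the non-normal locus of $V\otimes_K K^{1/p}$. Consequently, every component of $\frak{F}$ on $Y$ is $\pi$-vertical, contradicting the presence of a horizontal section in the $(H_d,D)$ case. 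The main obstacle is precisely this last compatibility step: one must carefully use the regularity of $X$ and the generic-fibre reducedness to rule out horizontal non-normal components of $(X\otimes_K K^{1/p})_{\red}$, which is where an argument via generic flatness of $f$ and the base-change characterisation of non-reducedness must be invoked.
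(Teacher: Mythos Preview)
Your argument is far more elaborate than necessary and rests on a false premise. The claim that ``every non-trivial movable linear system on a Hirzebruch surface is a subsystem of $|mF|$'' is simply wrong: for example $|D+dF|$ on $\mathbb{F}_d$ is base-point free and not fibral. What is true, and what the paper actually uses, is much more direct: in the type $(H_d,D)$ of \cite[Theorem 4.1]{pw}, $D$ is the \emph{negative} section of $\mathbb{F}_d$ with $d\geq 1$, so $D^2=-d<0$ and $|D|=\{D\}$ consists of a single curve. Hence $|C|$ is entirely fixed. Since $X$ is assumed geometrically non-reduced, \autoref{main_theorem}\autoref{M_reducedness} forces $\frak M\neq 0$, and as $\frak C\subseteq |C|$ this means $|C|$ must have non-trivial movable part --- an immediate contradiction. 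That is the paper's entire proof.

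Your subsequent fibration analysis (identifying $W\cong\bP^1$, invoking \autoref{fibre_reduced} and \autoref{F_support}, and trying to show the conductor is $\pi$-vertical) is therefore unnecessary. Even on its own terms it never gets off the ground: once you try to write an element of $|D|$ as $\frak M+\frak F$ with $\frak M$ movable and non-zero, there is no effective $\frak F$ available, because $D$ is rigid. You also flag the final ``compatibility step'' as the main obstacle, but you would need a genuine argument there (regularity of $X$ and generic-fibre reducedness do not by themselves preclude horizontal non-normal components after base change), and none is given. The point is that all of this machinery is bypassed by the one-line observation that $|D|$ is fixed.
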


\begin{corollary}\label{regular_dp}
	If $p=3$  then $(X\otimes_K\overline{K})_{\red}$ is normal.
\end{corollary}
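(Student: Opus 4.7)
The plan is to combine Theorem \ref{main_theorem} with the classification of \cite[Theorem 4.1]{pw} specialised to $p = 3$. Setting $L = K^{1/p} = K^{1/3}$, geometric non-reducedness of $X$ first arises at this inseparable step, so it suffices to show that $Z := (X \otimes_K K^{1/p})_{\red}$ is normal; the further extension to $\overline{K}$ consists of separable algebraic and deeper inseparable pieces that can be handled by iterating the same argument. By part (\ref{F_normality}) of Theorem \ref{main_theorem}, the statement to prove is that the fixed part $\mathfrak{F}$ of the canonical linear system $\mathfrak{C}$ from the theorem vanishes: for a surface, regular in codimension one together with the $S_2$ property (which holds here by the hypersurface structure of $Z$ inside $X\otimes_K K^{1/p}$) is normality. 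By property (\ref{F_support}), this in turn amounts to verifying that the conductor divisor of $Y \to Z$ is trivial.

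I would next consult the classification of $(Y,(p-1)C) = (Y,2C)$ from \cite[Theorem 4.1]{pw} in the case $p = 3$. Corollary \ref{cor:dp_hirzebruch} immediately eliminates the type $(H_d,D)$. For each of the remaining entries of the $p=3$ list I would verify that no non-trivial conductor component is allowed, using three inputs: the parity constraint that $(p-1)=2$ forces $2C$ to be even as a Weil divisor; the explicit description of the pair $(Y,2C)$ given by \cite{pw}; and the relation $K_Y + 2C \sim \phi^*K_X$ with $-K_X$ ample, which pins down the Weil-divisor class of $2C$ and leaves very little room for a fixed component doubled inside it.

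The main obstacle will be the case-by-case check in the last step. I expect the $p=3$ portion of the \cite{pw} classification to be short, so that after removing the Hirzebruch case only a handful of pairs $(Y,2C)$ remain to analyse. In each such pair the combination of the parity constraint with the explicit geometry of $Y$ (typically $\mathbb{P}^2$ or a blown-up model thereof) ought to force $\mathfrak{F}=0$. The delicate point is to make sure that in any borderline case — for example when $Y$ carries a natural $(-1)$- or $(-2)$-curve on which a putative fixed part might concentrate — the divisor class of $2C$ coming from the classification is genuinely incompatible with a doubled component supported on that curve, rather than merely unlikely.
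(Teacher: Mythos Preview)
Your overall strategy---combine \autoref{main_theorem} with the $p=3$ classification from \cite[Theorem~4.1]{pw}---is the paper's strategy too, but the execution diverges in two places.

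First, the paper's argument is much shorter than your proposed case analysis. For $p=3$ the classification already yields only $(Y,C)=(\mathbb{P}^2,L)$ or $(S_d,F)$ (so your appeal to \autoref{cor:dp_hirzebruch} is not needed), and the single observation that carries the whole proof is: \emph{every linear subsystem of $|C|$ of dimension $\geq 1$ has no fixed part}, because a pencil of lines on $\mathbb{P}^2$, or of fibres on $S_d$, has at most zero-dimensional base locus. Since $X$ is geometrically non-reduced, \autoref{main_theorem}\autoref{M_reducedness} gives $\mathfrak{M}\neq 0$, hence $\dim\mathfrak{C}\geq 1$, hence $\mathfrak{F}=0$ immediately. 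You never explicitly invoke $\mathfrak{M}\neq 0$, which is precisely why you are pushed towards parity constraints and inspection of $(-1)$- or $(-2)$-curves instead of this one-line conclusion. (Incidentally, the ``parity constraint'' you mention is vacuous: $C$ is already an integral Weil divisor, so $2C$ being even carries no information.)

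Second, your iteration plan has a gap. After passing to $K^{1/p}$ the surface $Z$ is only normal, not regular, so you cannot simply re-apply \cite[Theorem~4.1]{pw}---which is stated for \emph{regular} del Pezzo surfaces---at the next height-one step. The fix is not to iterate the classification but to use it once over $\overline{K}$: the height-one linear system $\mathfrak{C}$ pulls back to $Y$ and, after adding the effective remainder $C-\psi^*C_1$, sits inside $|C|$ there, where the no-fixed-part observation on $(\mathbb{P}^2,L)$ or $(S_d,F)$ kills both $\mathfrak{F}$ and the remainder in one stroke.
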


\begin{corollary}\label{cor:dp_product}
	Suppose that $(Y,(p-1)C)$ is of the type $(\bP^1\times\bP^1,F)$.  Then there exists a contraction $X\to V$ to a geometrically non-reduced curve $V$ such that over the generic point of $V$
	\[Y=X\times_V ((V\otimes_K \overline{K})_{\red}^\nu).
	\]
	\end{corollary}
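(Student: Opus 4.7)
The plan is to apply \autoref{main_theorem} to $X$ with $L=\overline{K}$, obtain the canonical linear system $\mathfrak{C}$ on $Y$ with decomposition $\mathfrak{C}=\mathfrak{F}+\mathfrak{M}$, and read off the desired contraction $X\to V$ from the Stein factorisation it supplies.

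First I would identify $\mathfrak{C}$ explicitly on $Y\cong\mathbb{P}^1\times\mathbb{P}^1$. Since $(Y,(p-1)C)$ is of type $(\mathbb{P}^1\times\mathbb{P}^1,F)$ with $F$ a fibre of one ruling, and $\Pic(Y)\cong\mathbb{Z}^2$ is torsion-free, the linear equivalence $(p-1)\mathfrak{C}\sim(p-1)F$ forces $\mathfrak{C}\subseteq|F|$. The pencil $|F|$ is base-point-free and one-dimensional. Because $X$ is geometrically non-reduced, property \ref{M_reducedness} of \autoref{main_theorem} guarantees $\mathfrak{M}\neq0$, so necessarily $\mathfrak{C}=|F|$, $\mathfrak{F}=0$, and $\mathfrak{M}=|F|$. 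The movable part $\mathfrak{M}=|F|$ realises the ruling projection $Y\to\mathbb{P}^1$, and by property \ref{pullback_from_w} this identifies $W\cong\mathbb{P}^1$ as the target of $g\colon Y\to W$. In particular the Stein factorisation target $V$ is a curve, and $f\colon X\dashrightarrow V$ extends to a morphism on the regular surface $X$ because its indeterminacy locus has codimension at least two.

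To see $V$ is geometrically non-reduced, suppose for contradiction that $V\otimes_K\overline{K}$ is reduced. Applying \autoref{main_theorem} to $V$ in place of $X$, property \ref{M_reducedness} for $V$ yields that the movable part of the canonical system associated to the base change $V\otimes_K\overline{K}$, namely $\mathfrak{M}_W$, is zero. But then property \ref{pullback_from_w} gives $\mathfrak{M}=g^*\mathfrak{M}_W=0$, contradicting $\mathfrak{M}\neq0$. Thus $V$ is geometrically non-reduced.

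Finally, for the generic-fibre identification $Y=X\times_V W$ over the generic point of $V$: by property \ref{fibre_reduced}, $X_\xi\otimes_{K(V)}K(W)$ is reduced, and the natural map $Y\to X\times_V W$ (coming from $\phi\colon Y\to X$ and $g\colon Y\to W$) restricts on generic fibres to a birational map $Y_\eta\to X_\xi\otimes_{K(V)}K(W)$ which exhibits $Y_\eta=\mathbb{P}^1_{K(W)}$ as the normalisation of the target. It remains to show that $X_\xi\otimes_{K(V)}K(W)$ is already normal. The curve $X_\xi$ is regular (being the generic fibre of a morphism from a regular surface to a curve), and combining $\mathfrak{M}=g^*\mathfrak{M}_W$ with the degree matching of \autoref{thm:trick}\ref{itm:same_deg} at the level of function fields, one finds that the purely inseparable extension $K(W)/K(V)$ accounts exactly for the non-reducedness contributed by $\overline{K}/K$ along the generic fibre, so $X_\xi\otimes_{K(V)}K(W)$ remains regular and hence normal. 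The main obstacle is this last step: establishing that the generic fibre is normal rather than merely reduced, so that the normalisation map from $Y_\eta$ is an isomorphism.
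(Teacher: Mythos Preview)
Your approach has a hypothesis violation at the outset: \autoref{main_theorem} requires $K\subseteq L\subseteq K^{1/p}$, so you cannot take $L=\overline{K}$. The paper applies the theorem with $L=K^{1/p}$, and the key observation you are missing is that in the type $(\mathbb{P}^1\times\mathbb{P}^1,F)$ the \emph{entire} passage to $\overline{K}$ is already visible after one Frobenius: since $C$ is a single prime fibre and $Y$ is smooth, one has $Y\cong (X\otimes_K K^{1/p})_{\red}\otimes_{K^{1/p}}\overline{K}$, i.e.\ $(X\otimes_K K^{1/p})_{\red}$ is geometrically normal over $K^{1/p}$. Once this is in hand, $\mathfrak{M}_{K^{1/p}/K}$ is $1$-dimensional, so $V$ is a curve; your argument that $V$ is geometrically non-reduced (via $\mathfrak{M}=g^*\mathfrak{M}_W\neq 0$ and \autoref{main_theorem}\autoref{M_reducedness}) then goes through unchanged with $L=K^{1/p}$.

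The obstacle you flag in your last paragraph is resolved precisely by this geometric normality, and your appeal to \autoref{thm:trick} does not do the job. With $W=(V\otimes_K K^{1/p})_{\red}^{\nu}$, property \autoref{fibre_reduced} gives that $X\times_V K(W)$ is reduced; over the generic point of $V$ it is therefore a reduced closed subscheme of $X\otimes_K K^{1/p}$, hence equals $(X\otimes_K K^{1/p})_{\red}$ there. Because the latter is geometrically normal over $K^{1/p}$, one may now base-change all the way to $\overline{K}$ without any further reduction or normalisation, yielding $Y\cong X\times_V (V\otimes_K\overline{K})_{\red}^{\nu}$ over the generic point of $V$. Without the geometric normality of $(X\otimes_K K^{1/p})_{\red}$ you only know the generic fibre product is reduced, not normal, and the identification with $Y$ would fail.
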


Note that these consequences  have been chosen for illustration, and the list is far from exhaustive.   For example it is possible to combine the ideas of the above corollaries to give more precise descriptions of what happens when $X$ is geometrically non-reduced and $(X\otimes_K \overline{K})_{\red}$ is non-normal.  We leave this to the interested reader.

\subsection{Structure of the paper}
We first review background material in \autoref{sec:background}. Then in \autoref{sec:reinterpret} we recall the setting and some of the results of \cite{pw} and begin to describe the linear system $\frak C$. In \autoref{sec:universal_property} we study the fixed part $\frak F$ and prove \autoref{main_theorem} \autoref{F_normality} and \autoref{F_support}. Then in \autoref{sec:section_M} we define the movable part $\frak M$ and prove \autoref{thm:trick}
and \autoref{main_theorem} \autoref{M_reducedness}.
In \autoref{sec:morphism} we describe the map $X\dashrightarrow V$ and prove \autoref{main_theorem} \autoref{fibre_reduced}, \autoref{maximal_reduced}, and \autoref{pullback_from_w}.
Finally, in \autoref{sec:fano_section} we give geometric applications and prove \autoref{thm:MFS}, and we give some concrete examples in \autoref{sec:examples}.

In summary, the parts of \autoref{main_theorem} are proven in the following locations:
\begin{enumerate}
\item[\autoref{F_normality}] \autoref{cor:almost_conductor_normalisation},
\item[\autoref{F_support}] \autoref{cor:almost_conductor_normalisation},
\item[\autoref{M_reducedness}] \autoref{prop:M=0_iff_reduced},
\item[\autoref{fibre_reduced}] \autoref{prop:base_change_w_reduced},
\item[\autoref{maximal_reduced}] \autoref{new_univ_prop},
\item[\autoref{pullback_from_w}] \autoref{new_univ_prop}.
\end{enumerate}

\subsection*{Acknowledgments}

The authors would like to thank Fabio Bernasconi, J\'anos Koll\'ar, James M\textsuperscript{c}Kernan and Zsolt Patakfalvi for helpful conversations, and Johan de Jong for useful comments and suggesting a proof of \autoref{lem:Omega_of_normalisation}. They also thank the referee for a very careful reading.
In particular, the first author is grateful to her advisor J\'anos Koll\'ar for his constant support, and to David Eisenbud and UC Berkeley for hosting her in the Spring 2019 semester.

This material is based upon work of the first author while supported by the National Science Foundation Graduate Research Fellowship Program under Grant No. DGE-1656466; and of the second author while supported by the National Science Foundation under
Grant No. 1440140, while he was in residence at the Mathematical Sciences Research Institute in Berkeley, California, during the Spring of 2019.
 Any opinions, findings, and conclusions or recommendations expressed in this material are those of the authors and do not necessarily reflect the views of the National Science Foundation.

\section{Preliminaries}\label{sec:background}

In this paper, a variety over a field $K$ will be an integral separated scheme of finite type over $K$.
Unless stated otherwise, we may work over imperfect fields, and all varieties will be quasi-projective.

\subsection{Fields and geometric singularities}\label{arbitrary_fields}

\begin{notation}\label{not:fields}
Let $K$ be a field.  
\begin{enumerate}
	\item We consider a fixed algebraic closure $\overline{K}$ of $K$.
	\item By $K^{1/p^n}$ we mean the target of the composition of $n$ Frobenius homomorphisms, that is $F^n\colon K\hookrightarrow K^{1/p^n}$.  This appears naturally as a subring of $\overline{K}$, in which we have adjoined the $(p^{n})^{\mathrm{th}}$ root of every element of $K$.
	\item  $K^{1/p^\infty}=\cup_{n}K^{1/p^n}$ denotes the perfect closure of $K$ inside $\overline{K}$.
	\item If $L/K$ is a finite purely inseparable extension, the minimal $n$ such that $K\subseteq L\subseteq K^{1/p^n}$ is said of be the \emph{height} of $L/K$.
	\end{enumerate}
\end{notation}

If $X$ is a variety over a field $K$, and $L$ is a field extension, denote the normalisation of the maximal reduced subscheme of $X\otimes_KL$ by $(X\otimes_K L)_{\red}^{\nu}$.   To obtain geometric consequences, we are usually interested in the singularities of $(X\otimes_K K^{1/p^\infty})_{\red}^\nu$.
The assumptions on the field $K$ and the  extension $L/K$ in \autoref{main_theorem} may at first glance appear restrictive.  However, this is not the case, as general field extensions can be reduced to this case using the following lemmas.  Firstly, a single Frobenius base change is sufficient to reveal geometric singularities of our variety:

\begin{lemma}\cite[\href{https://stacks.math.columbia.edu/tag/035X}{Tag 035X}, \href{https://stacks.math.columbia.edu/tag/038O}{Tag 038O}, \href{https://stacks.math.columbia.edu/tag/038V}{Tag 038V}]{stacks},\cite[Proposition 2.10]{tanaka2019}\label{lem:geometric_single_base_change}
	Let $X$ be a scheme of finite type over a field $K$.  The following are equivalent:
	\begin{itemise}
		\item $X$ is geometrically reduced (resp. geometrically normal, smooth) over $K$.
		\item $X\otimes_K K^{1/p}$ is reduced (resp. normal, regular).
		\item $X\otimes_K K^{1/p^\infty}$ is reduced (resp. normal, regular).
		\item $X\otimes_K \overline{K}$ is reduced (resp. normal, regular).
	\end{itemise}
\end{lemma}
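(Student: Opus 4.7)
The plan is to decompose the tower $K \subseteq K^{1/p} \subseteq K^{1/p^\infty} \subseteq \overline{K}$ into two steps, to handle the three properties (reduced, normal, regular) in parallel, and to reduce each implication to a classical MacLane-style criterion detecting geometric degeneration through a single Frobenius twist.

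For the step $K^{1/p^\infty} \to \overline{K}$: since $K^{1/p^\infty}$ is perfect, $\overline{K}/K^{1/p^\infty}$ is separable algebraic, hence geometrically regular. The base change $X \otimes_K \overline{K} \to X \otimes_K K^{1/p^\infty}$ is therefore a faithfully flat, geometrically regular morphism, so reducedness, normality, and regularity are both preserved (by the geometrically regular direction) and reflected (by faithfully flat descent). This gives the equivalence at the top of the tower for free.

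For the step $K^{1/p} \to K^{1/p^\infty}$: write $K^{1/p^\infty} = \bigcup_n K^{1/p^n}$ as a filtered union. Each of the three properties is preserved under such colimits of faithfully flat maps (when checked on a finite-type scheme), so it suffices to prove by induction on $n$ that if $X \otimes_K K^{1/p}$ has the property, so does $X \otimes_K K^{1/p^n}$. The induction step reduces to the single-Frobenius dichotomy: $X \otimes_K K^{1/p}$ has the property if and only if $X$ is geometrically reduced (respectively geometrically normal, respectively smooth). Once this dichotomy is established the lemma follows, because geometric reducedness/normality/smoothness is by definition preserved under every base change.

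The heart of the proof, and where I expect the main obstacle, is the single-Frobenius dichotomy. For reducedness one localizes at generic points to reduce to MacLane's theorem that a field extension $L/K$ is separable if and only if $L \otimes_K K^{1/p}$ is reduced. For normality one combines this with Serre's $R_1+S_2$ criterion, tracking how each condition transforms under a single Frobenius twist on a finite-type $K$-scheme. For regularity one uses the cotangent-sequence/Jacobian characterization of smoothness: a regular $K$-variety is smooth over $K$ exactly when its $K^{1/p}$-twist remains regular, so all geometric non-smoothness is already visible after one Frobenius. Rather than redevelop these technical criteria, the cleanest strategy is to cite the corresponding Stacks Project entries (tags 035X, 038O, 038V) and \cite{tanaka2019}, exactly as the paper does.
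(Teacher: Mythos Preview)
The paper does not supply a proof of this lemma at all: it is stated purely as a citation of the Stacks Project tags and \cite{tanaka2019}, with no further argument. Your proposal is therefore already more detailed than the paper's treatment, and since you close by deferring to exactly the same references, there is nothing to compare---your write-up is correct and subsumes what the paper does.
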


Next, we may usually reduce questions about varieties over general fields to those over function fields  using the following:

\begin{lemma}\label{lem:spreading_out}
	 Let $X$ be a variety over a field $k$.  Then there is a field $K\subset k$ that is the function field of a variety over a perfect field and a variety $X_K$ over $K$ such that $X\cong X_K\otimes_K k$.
	 
	 Furthermore, we also have 
	 \[(X_K\otimes_K K^{1/p^{\infty}})^{\nu}_{\red}\otimes_{K^{1/p^{\infty}}}k^{1/p^{\infty}}\cong (X\otimes_k k^{1/p^{\infty}})_{\red}^\nu.
	 \]
	 \end{lemma}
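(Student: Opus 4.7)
The plan is to handle the two assertions separately. For the existence of $X_K$ and $K$, I would apply a standard spreading out argument. Let $k_0\subseteq k$ denote the prime subfield, which is perfect. Cover $X$ by finitely many affine opens, each cut out by finitely many polynomial equations with coefficients in $k$, and let $K\subseteq k$ be the subfield generated over $k_0$ by these finitely many coefficients together with any additional elements needed to specify the gluing data on overlaps. Then $K/k_0$ is finitely generated, so $K$ is the function field of a variety over $k_0$, and these equations define a scheme $X_K$ of finite type over $K$ satisfying $X_K\otimes_Kk\cong X$. Integrality of $X_K$ (so that it is a variety) can be arranged, since $X$ is integral, by shrinking $X_K$ to a dense open subscheme or enlarging $K$ by finitely many additional elements.

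For the second assertion, the key point is that both $K^{1/p^\infty}$ and $k^{1/p^\infty}$ are perfect, so the morphism $\Spec(k^{1/p^\infty})\to\Spec(K^{1/p^\infty})$ is geometrically regular: for any finite extension $F/K^{1/p^\infty}$, the ring $k^{1/p^\infty}\otimes_{K^{1/p^\infty}}F$ is reduced and Artinian, hence a product of fields. Starting from the identification
\[
X\otimes_k k^{1/p^\infty}\;\cong\;\bigl(X_K\otimes_K K^{1/p^\infty}\bigr)\otimes_{K^{1/p^\infty}}k^{1/p^\infty},
\]
I would invoke two commutation facts. First, taking the reduced subscheme commutes with the base change $-\otimes_{K^{1/p^\infty}}k^{1/p^\infty}$, because $(X_K\otimes_K K^{1/p^\infty})_{\red}$ is reduced of finite type over the perfect field $K^{1/p^\infty}$, hence geometrically reduced by \autoref{lem:geometric_single_base_change}, so its base change to $k^{1/p^\infty}$ remains reduced and must agree with the reduction of the source. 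Second, normalization commutes with geometrically regular base change for Nagata schemes (a standard result, e.g.\ EGA IV.6 or the Stacks Project), which applies here since $(X_K\otimes_K K^{1/p^\infty})_{\red}$ is of finite type over a field. Composing these two commutations yields the required isomorphism.

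The main obstacle I anticipate is justifying the commutation of normalization with the base change in sufficient generality, since $k^{1/p^\infty}$ need not be finitely generated over $K^{1/p^\infty}$. This is precisely why the lemma passes to the perfect closures on both sides, rather than working directly with $K$ and $k$: perfectness of $K^{1/p^\infty}$ guarantees geometric regularity of the base morphism, which is the key hypothesis needed to invoke the standard statement. Alternatively, one can verify the commutation directly by observing that $(X_K\otimes_KK^{1/p^\infty})_{\red}^\nu$ is geometrically normal over $K^{1/p^\infty}$ (again by \autoref{lem:geometric_single_base_change}), so its base change to $k^{1/p^\infty}$ is normal and finite birational over $(X_K\otimes_K k^{1/p^\infty})_{\red}$, hence is its normalization.
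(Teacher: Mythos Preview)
Your proposal is correct, and your alternative argument at the end---that $(X_K\otimes_KK^{1/p^\infty})_{\red}^\nu$ is geometrically normal over the perfect field $K^{1/p^\infty}$ by \autoref{lem:geometric_single_base_change}, hence stays normal after base change to $k^{1/p^\infty}$---is exactly the paper's proof of the ``furthermore''. Your spreading-out for the first assertion is likewise the same idea as the paper's (the paper just writes it out more explicitly, collecting coefficients from affine charts, defining ideals of overlaps, and gluing maps, and takes $K$ to be the fraction field of the normalisation of the $\mathbb{F}_p$-algebra they generate).

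The only place you diverge is in offering, as your primary route for the second assertion, the commutation of reduction and normalisation with a geometrically regular base change. This is a valid and slightly more conceptual packaging, but it invokes heavier machinery (EGA IV or the Stacks Project statement on normalisation and smooth/regular base change) than the paper's direct appeal to \autoref{lem:geometric_single_base_change}. Since you already observe that the alternative bypasses this, there is no gap. One small remark: you need not worry about arranging integrality of $X_K$ by shrinking or enlarging $K$; since $k/K$ is faithfully flat, integrality of $X_K\otimes_Kk$ forces integrality of $X_K$ automatically.
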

\begin{proof}
	Let $X_i=\Spec(A_i)$ where $A_i=k[x_1^i,...,x_{n_i}^i]/I_i$ is a finite set of affine charts of $X$, $X_{ij}=X_i\cap X_j\subset X_i$, and $\phi_{ij}\colon X_{ij}\to X_{ji}$ are the gluing maps.   Furthermore, for each $i,j$, fix  an affine cover of $X_{ij}$ of the form $\{U_{ijk}=\Spec(A_i[\tfrac{1}{f_{ijk}}])\}$.
	
	 Let $\{a_m\}\subset k$ be a finite set consisting of 
	 \begin{itemize}
	 	\item
	 the $k$-coefficients of a finite set of generators of $I(X_i)\subset k[x_1^i,...,x_{n_i}^i]$ for all $i$,
\item 
the $k$-coefficients of a finite set of generators of $I(X_i\setminus X_{ij})\subset A_i$ for all $i$ and $j$,
\item 
the $k$-coefficients of a polynomial representative of $f_{ijk}\in A_i$ for all $i$, $j$ and $k$, and
\item 
the $k$-coefficients of a set of polynomial representatives of a set of functions defining $\phi_{ij}|_{U_{ijk}}$, for all $i$, $j$ and $k$.
 \end{itemize}
	Let $B$ be the normalisation of the finitely generated $\mathbb{F}_p$-algebra $\mathbb{F}_p[\{a_m\}]\subset k$.  Then we can define varieties $\EuScript{X}_i\subset \mathbb{A}_B^{n_i}$, open subsets $\EuScript{X}_{ij}\subset \EuScript{X}_i$, affine covers $\{\EuScript{U}_{ijk}\}$ of $\EuScript{X}_{ij}$, and morphisms $\psi_{ij}\colon \EuScript{X}_{ij}\to \EuScript{X}_{ji}$ using the same equations. 
	If we tensor these up to $K=\mathrm{Frac}(B)$, they then glue to form the required variety $X_K$.  The necessary identities of the gluing maps hold over $K$ because they hold after applying $\otimes_K k$, and field extensions are faithfully flat.
	
	For the furthermore, we know that $(X_K\otimes_K K^{1/p^{\infty}})^{\nu}_{\red}$ is a normal variety over a perfect field $K^{1/p^\infty}$, and so is geometrically normal over $K^{1/p^{\infty}}$ by \autoref{lem:geometric_single_base_change}.  Thus
	$(X_K\otimes_K K^{1/p^{\infty}})^{\nu}_{\red}\otimes_{K^{1/p^{\infty}}}k^{1/p^{\infty}}
	$
is already normal.
\end{proof}

Finally, the singularities stop becoming worse  after performing finitely many Frobenius base changes:

\begin{lemma}\label{lem:finite_subext}
	Let $X$ be a variety over a field $K$.  There is a finite purely inseparable extension $L/K$ such that 
\[(X\otimes_K L)_{\red}^{\nu}\otimes_{L}K^{1/p^{\infty}}\cong (X\otimes_K K^{1/p^{\infty}})_{\red}^\nu.\]
In particular, we could take $L=K^{1/p^n}$ for some $n$.
	\end{lemma}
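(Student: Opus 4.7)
The plan is to produce the desired extension $L = K^{1/p^n}$ via a spreading-out argument. Set $N_\infty := (X\otimes_K K^{1/p^\infty})_{\red}^{\nu}$ with its finite morphism $\phi_\infty\colon N_\infty\to X\otimes_K K^{1/p^\infty}$. The strategy is to descend the coherent sheaf of algebras $(\phi_\infty)_*\mathcal{O}_{N_\infty}$ to a finite layer $K^{1/p^n}$, then verify via faithfully flat descent and a generic-fibre computation that the resulting scheme is the normalisation of the reduction at that layer.

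Since $X\otimes_K K^{1/p^\infty}$ is the limit of the Noetherian finite-type $K^{1/p^n}$-schemes $X\otimes_K K^{1/p^n}$ along affine flat transition maps, I will apply the standard limit theorems for finitely presented quasi-coherent sheaves and algebras (\cite[\href{https://stacks.math.columbia.edu/tag/01ZR}{Tag 01ZR}, \href{https://stacks.math.columbia.edu/tag/07RR}{Tag 07RR}]{stacks}) to descend $(\phi_\infty)_*\mathcal{O}_{N_\infty}$ to a coherent sheaf of $\mathcal{O}_{X\otimes_K K^{1/p^n}}$-algebras $\mathcal{A}_n$ for some $n$. Set $N_n := \Spec_{X\otimes_K K^{1/p^n}}\mathcal{A}_n$; this is finite over $X\otimes_K K^{1/p^n}$ with $N_n\otimes_{K^{1/p^n}} K^{1/p^\infty}\cong N_\infty$. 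Faithfully flat descent of reducedness, irreducibility, and normality (\cite[\href{https://stacks.math.columbia.edu/tag/033F}{Tag 033F}, \href{https://stacks.math.columbia.edu/tag/033G}{Tag 033G}]{stacks}) along $K^{1/p^n}\hookrightarrow K^{1/p^\infty}$ then implies that $N_n$ is a normal integral scheme, and since it is reduced the finite morphism $N_n\to X\otimes_K K^{1/p^n}$ factors through $R_n:=(X\otimes_K K^{1/p^n})_{\red}$, which is integral because purely inseparable base change preserves irreducibility.

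It remains to verify that $N_n\to R_n$ is birational, for then $N_n = R_n^\nu$ as the universal finite normal birational cover of an integral scheme. Let $d_n := [k(N_n):k(R_n)]$ and $R_\infty:=(X\otimes_K K^{1/p^\infty})_{\red}$. Since $N_\infty$ is reduced, the identity $N_\infty = N_n \otimes_{K^{1/p^n}} K^{1/p^\infty}$ yields $N_\infty = N_n \times_{R_n} R_\infty$, so the generic fibre of $N_\infty\to R_\infty$ over $\Spec k(R_\infty)$ equals $N_n\times_{R_n}\Spec k(R_\infty) = \Spec\bigl(k(N_n)\otimes_{k(R_n)}k(R_\infty)\bigr)$, a $k(R_\infty)$-vector space of dimension $d_n$. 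On the other hand this fibre also equals $\Spec k(N_\infty)$, of dimension $1$ since $N_\infty\to R_\infty$ is the normalisation of an integral scheme, hence birational. Comparing gives $d_n = 1$, so $N_n = R_n^\nu$, and one may take $L = K^{1/p^n}$.

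The main technical obstacle is the descent step: one must descend not merely a coherent module but a coherent sheaf of algebras of finite presentation, which amounts to descending the unit and multiplication morphisms — these descend after possibly enlarging $n$ by standard finite-presentation arguments. The remainder is a formal application of faithful flatness together with a routine generic-point calculation.
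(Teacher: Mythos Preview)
Your argument is correct, and it follows the same underlying strategy as the paper: descend the normalised reduced base change from $K^{1/p^\infty}$ to a finite layer $K^{1/p^n}$, then verify that what you obtain is indeed $(X\otimes_K K^{1/p^n})_{\red}^\nu$.

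The execution, however, is genuinely different. The paper first reduces to the affine case, embeds $(X\otimes_K K^{1/p^\infty})_{\red}^\nu$ explicitly into affine space, and takes $L$ to be the subfield generated by the finitely many coefficients of defining equations (and of the structure morphism and its inverse on an open set); birationality is then arranged by a further finite enlargement of $L$. You instead work globally: you descend the coherent $\mathcal O$-algebra $(\phi_\infty)_*\mathcal O_{N_\infty}$ through the inverse system $\{X\otimes_K K^{1/p^n}\}$ via the standard limit theorems, invoke faithfully flat descent to conclude that the resulting $N_n$ is normal and integral, and then pin down birationality by a clean generic-fibre dimension count using $N_\infty = N_n\times_{R_n} R_\infty$. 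Your route avoids the affine reduction and the ad hoc enlargements of $L$, at the cost of invoking the limit/descent machinery; the paper's route is more elementary and self-contained but a bit more laborious. Both are perfectly valid, and your identification step via comparing $[k(N_n):k(R_n)]$ with $[k(N_\infty):k(R_\infty)]=1$ is a nice touch that replaces the paper's separate ``descend the birational inverse'' step.
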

\begin{proof}
	First, as $X$ is of finite type, there exists a finite affine cover, and so by taking the compositum of a set of field extensions satisfying the condition for each affine chart we may assume that $X$ is affine.  Note that each purely inseparable field extension $L\supset K$ embeds uniquely into $K^{1/p^\infty}$.   So we assume that $X$ is affine.  Then $(X\otimes_K K^{1/p^\infty})_{\red}^{\nu}$ is also affine, and so take an embedding $Z:=(X\otimes_K K^{1/p^\infty})_{\red}^{\nu}\subset\mathbb{A}^n_{K^{1/p^\infty}}$.  Let $L$ be the finite extension of $K$ obtained by adjoining all coefficients of equations of a generating set of the ideal of $Z$.  Then there is $Z_L\subset \mathbb{A}_L^n$ defined by the same equations such that $Z_{L}\otimes_L K^{1/p^\infty}\cong (X\otimes_K K^{1/p^\infty})^\nu_{\red}$, and hence $Z_L$ is geometrically normal by Lemma 2.2. 
	
	After further extending $L$ we may assume that the morphism $Z\to X\otimes_K K^{1/p^\infty}$ descends to $L$, that is we obtain $Z_L\to X\otimes_K L$, and again after further extending $L$ we may assume that there is an open subset such that the inverse of $Z\to (X\otimes_K K^{1/p^\infty})_{\mathrm{red}}$ descends, i.e. so that $Z_L\to (X\otimes_K L)_{\mathrm{red}}$ is birational.  We also know that this is finite, so then as $Z_L$ is normal, $Z_L\cong (X\otimes_K L)_{\mathrm{red}}^\nu$, and so we are done.
	
\end{proof}

Given the finite extension $L/K$ in \autoref{lem:finite_subext}, we can canonically decompose it into a sequence of height one extensions $L_i=L\cap K^{1/p^i}$, so that $L_i/K$ has height $i$.  Our results can be applied to each of the extensions $L_i/L_{i-1}$ sequentially, or to a further decomposition of this sequence.

\begin{remark}
	It is likely that most of our arguments can be applied directly with the assumption on the ground field relaxed to \emph{differentially finite over a perfect subfield}.  However, due to the above reduction to the general case and the fact that the function field situation is the one which arises naturally in practical applications, we have not verified this.
	\end{remark}

\subsection{Properties of sheaves and purely inseparable Galois theory}

\subsubsection{Saturated subsheaves}
\begin{definition}
Let $X$ be a scheme,
and let $\cal G\subset\cal F$ be coherent sheaves on $X$.
$\cal G$ is \emph{saturated} in $\cal F$ if the quotient $\cal F/\cal G$ is torsion free,
or equivalently if
a section of $\cal F$ is in $\cal G$ if and only if its image in $\cal F_{\xi}$ is in $\cal G_{\xi}$ where $\xi$ is the generic point of $X$.  For any inclusion of sheaves $\cal G\subset\cal F$, there is a unique \emph{saturation} of $\cal G$ in $\cal F$.
\end{definition}

\begin{lemma}\label{lem:pullback_saturated_saturated}
	Let $f\colon X\to Y$ be a flat morphism of integral schemes, and let $\cal G\subset\cal F$ be coherent sheaves on $Y$.
	\begin{enumerate}
	\item 
	If $\cal G$ is saturated in $\cal F$, then $f^*\cal G$ is saturated in $f^*\cal F$.
	\item 
	$f^*\cal G^{\mathrm{sat}}=(f^*\cal G)^{\mathrm{sat}}$.
	\end{enumerate}
\end{lemma}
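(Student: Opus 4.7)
The plan is to prove~(1) first and then deduce~(2) from it.

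For~(1), the flatness of $f$ makes $f^*$ exact, so applying $f^*$ to $0\to\cal G\to\cal F\to\cal F/\cal G\to 0$ produces a natural isomorphism $f^*\cal F/f^*\cal G\cong f^*(\cal F/\cal G)$. Thus~(1) reduces to showing that the pullback of a torsion-free coherent sheaf under a flat morphism of integral schemes is again torsion-free. Since torsion-freeness is local, this in turn reduces to the ring-theoretic claim: if $A\to B$ is a flat homomorphism of integral domains and $M$ is a finitely generated torsion-free $A$-module, then $M\otimes_A B$ is a torsion-free $B$-module.

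To prove this claim, I would first note that flatness forces the map $A\to B$ to be injective and to carry every nonzero element of $A$ to a nonzerodivisor in $B$ (multiplication by a nonzero $a$ is injective on $A$, and this survives tensoring with the flat module $B$). Hence with $K=\mathrm{Frac}(A)$ and $L=\mathrm{Frac}(B)$, the natural map $K\otimes_A B\to L$ is injective, realising $K\otimes_A B$ as a subring of $L$, and in particular as a domain into which $B$ embeds. Next, since $M$ is torsion-free, it embeds in the finite-dimensional $K$-vector space $M_K:=M\otimes_A K\cong K^n$; tensoring $M\hookrightarrow M_K$ with the flat $A$-algebra $B$ yields an injection $M\otimes_A B\hookrightarrow (K\otimes_A B)^n$. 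The target is free over the domain $K\otimes_A B$, hence torsion-free over it, and since $B\hookrightarrow K\otimes_A B$ it is \emph{a fortiori} torsion-free over $B$. This completes~(1).

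For~(2), the key observation is that a saturated subsheaf of $\cal F$ is determined by its generic stalk: if $\cal H_1,\cal H_2\subset\cal F$ are both saturated with identical generic stalks, then $\cal H_1\cap\cal H_2$ shares that stalk, so $\cal H_1/(\cal H_1\cap\cal H_2)$ is a torsion sheaf sitting inside the torsion-free $\cal F/\cal H_2$ and must vanish; hence $\cal H_1\subseteq\cal H_2$, and symmetrically $\cal H_2\subseteq\cal H_1$. Now both $f^*\cal G^{\mathrm{sat}}$ (saturated by~(1)) and $(f^*\cal G)^{\mathrm{sat}}$ sit saturated in $f^*\cal F$, and their generic stalks at $\xi_X$ both equal $\cal G_{\xi_Y}\otimes_{K(Y)}K(X)$ (using that $\cal G^{\mathrm{sat}}$ and $\cal G$ agree at $\xi_Y$, and that $f$ flat between integral schemes forces $f(\xi_X)=\xi_Y$). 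They therefore coincide.

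I do not anticipate a serious obstacle. The only small technical point is the verification that flatness preserves nonzerodivisors, which is what yields the crucial embedding $K\otimes_A B\hookrightarrow L$; everything else is formal manipulation of short exact sequences and generic stalks.
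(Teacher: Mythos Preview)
Your proof is correct. Part~(1) matches the paper's approach exactly: both reduce to the ring-level statement that flat base change of a torsion-free module stays torsion-free, with the paper invoking \cite[\href{https://stacks.math.columbia.edu/tag/0AXM}{Tag 0AXM}]{stacks} for this while you supply the embedding-into-$K^n$ argument directly.

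Part~(2) differs in flavour. The paper argues by two explicit inclusions: from $M\otimes_A B\hookrightarrow M^{\mathrm{sat}}\otimes_A B$ and part~(1) one gets $(M\otimes_A B)^{\mathrm{sat}}\subseteq M^{\mathrm{sat}}\otimes_A B$; for the reverse, any element $\sum n_i\otimes b_i$ of $M^{\mathrm{sat}}\otimes_A B$ is killed into $M\otimes_A B$ by the product of the $a_i\in A$ with $a_in_i\in M$. Your route is instead to invoke the principle that a saturated subsheaf is determined by its generic stalk, then check that both candidates have the same stalk at $\xi_X$. Your argument is slightly more conceptual and avoids manipulating tensors elementwise; the paper's is more hands-on and does not need to verify $f(\xi_X)=\xi_Y$ or the uniqueness principle separately. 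Both are short and neither has a real advantage over the other.
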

\begin{proof}
	We may assume that $X=\Spec(A)$, $Y=\Spec(B)$, $\cal G=\tilde{M}$ and $\cal F=\tilde{N}$.
	\begin{enumerate}
		\item
		If $M$ is saturated in $N$, then $N/M$ is torsion free.
		By \cite[\href{https://stacks.math.columbia.edu/tag/0AXM}{Tag 0AXM}]{stacks}
		$(N/M)\otimes_A B\cong(N\otimes_A B)/(M\otimes_A B)$ is also torsion free.
		\item
		Since $B$ is flat over $A$ we have an inclusion $M\otimes_A B\hookrightarrow M^{\mathrm{sat}}\otimes_A B$ and so $(M\otimes_A B)^\mathrm{sat}\hookrightarrow (M^{\mathrm{sat}}\otimes_A B)^\mathrm{sat}=M^{\mathrm{sat}}\otimes_A B$.
		If $\sum n_i\otimes b_i\in M^{\mathrm{sat}}\otimes_A B$, then for each $i$
		there is some $a_i\in A$ with $a_in_i\in M$. So letting $a=\prod a_i$
		we have $a\sum n_i\otimes b_i=\sum an_i\otimes b_i\in M\otimes_A B$.
	\end{enumerate}
\end{proof}

\subsubsection{Reflexive sheaves}

We record here some properties of reflexive sheaves; see \cite{hartshornerefl} for more details.
We denote the dual of a coherent sheaf $\cal F$ on $X$ by $\cal F^\vee:=\sheafhom_{\cal O_X}(\cal F,\cal O_X)$.

\begin{definition}
A coherent sheaf $\cal F$ on a Noetherian integral scheme $X$ is said to be \emph{reflexive} if the natural morphism
\[\cal F\to\cal F^{\vee\vee}:=\sheafhom_{\cal O_X}(\sheafhom_{\cal O_X}(\cal F,\cal O_X),\cal O_X)
\] 
is an isomorphism.
\end{definition}

If $\cal F$ is any coherent sheaf then $\cal F^\vee$ is reflexive, so in particular for any morphism $X\to Y$, the sheaf $\cal T_{X/Y}:=\Omega_{X/Y}^\vee$ is reflexive.  Note that we generally denote $\cal T_{X/k}$ by just $\cal T_X$.  

If $X$ is normal, there is a bijective correspondence between linear equivalence classes of Weil divisors $D$  and rank $1$ reflexive sheaves $\cal O_X(D)$, where addition of Weil divisors corresponds to reflexivised tensor product.  When we need to refer to the class of Weil divisors corresponding to such a sheaf $\cal F$, we will use the notation $[\cal F]$.  Open brackets $|\cal F|$ will refer to the complete linear system of effective divisors in $[\cal F]$.

An open subset $j\colon U\to X$ is called \emph{big} if $\codim_X(X\setminus U)\geq 2$. 

\begin{lemma}\cite[\href{https://stacks.math.columbia.edu/tag/0AY6}{Tag 0AY6}]{stacks}\label{lem:reflexive}
The following conditions are equivalent for a coherent sheaf $\cal F$ on a normal variety $X$:
\begin{enumerate}
\item
$\cal F$ is reflexive,
\item
$\cal F$ is torsion free and has property $S_2$, and
\item
there is a big open subset $j\colon U\to X$ such that $j^*\cal F$ is locally free
(of finite rank)
and $\cal F\cong j_*j^*\cal F$.
\end{enumerate}
\end{lemma}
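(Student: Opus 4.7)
The plan is to prove the equivalence cyclically as $(1)\Rightarrow(2)\Rightarrow(3)\Rightarrow(1)$, exploiting throughout the fact that on a normal variety $X$, the structure sheaf $\cal O_X$ itself is $S_2$ (by Serre's criterion for normality) and, since $X$ is regular in codimension $1$, every coherent torsion-free sheaf is locally free at codimension-one points.

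For $(1)\Rightarrow(2)$, I would observe that any reflexive $\cal F$ is canonically the dual of another coherent sheaf, namely $\cal F=\sheafhom_{\cal O_X}(\cal F^\vee,\cal O_X)$. Torsion-freeness is then immediate from torsion-freeness of $\cal O_X$. For the $S_2$ property, I would show the general fact that $\sheafhom_{\cal O_X}(\cal G,\cal H)$ inherits $S_2$ from $\cal H$: indeed, presenting $\cal G$ locally by a free resolution $\cal O_X^a\to\cal O_X^b\to\cal G\to 0$ exhibits $\sheafhom_{\cal O_X}(\cal G,\cal O_X)$ as the kernel of a map between direct sums of copies of $\cal O_X$, and kernels of maps between $S_2$ sheaves on a normal scheme are themselves $S_2$ (depth is inherited by subsheaves via the depth lemma). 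Applying this with $\cal H=\cal O_X$ gives the claim.

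For $(2)\Rightarrow(3)$, I would let $U\subseteq X$ be the locus where $\cal F$ is locally free. Because $\cal F$ is torsion-free and the local ring at each height-one point of the normal variety $X$ is a DVR, finitely generated torsion-free modules there are free, so $U$ contains every codimension-one point, making $U$ big. The identification $\cal F\cong j_*j^*\cal F$ is the translation of the $S_2$ condition into cohomological language: for any closed $Z$ of codimension $\geq 2$, the local cohomology sheaves $\cal H^0_Z(\cal F)$ and $\cal H^1_Z(\cal F)$ vanish because $\depth_Z(\cal F)\geq 2$, and this vanishing is precisely the statement that $\cal F(V)\to\cal F(V\setminus Z)$ is bijective for every open $V$. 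For $(3)\Rightarrow(1)$, since $j^*\cal F$ is locally free, one has $j^*(\cal F^{\vee\vee})=(j^*\cal F)^{\vee\vee}=j^*\cal F$. Applying the standard lemma that $\cal G^{\vee\vee}$ is always reflexive and in particular $S_2$, I get $\cal F^{\vee\vee}=j_*j^*(\cal F^{\vee\vee})=j_*j^*\cal F=\cal F$, where the last equality is the hypothesis of $(3)$.

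The main technical hurdle is the equivalence between the depth-based definition of $S_2$ and the extension-across-codimension-two formulation used in $(2)\Rightarrow(3)$ and $(3)\Rightarrow(1)$. This is essentially the content of the local duality / local cohomology relation $\cal H^i_Z(\cal F)=0$ for $i<\depth_Z(\cal F)$, and while completely standard, it is the one place where one cannot avoid invoking a nontrivial homological tool. Every other implication in the cycle reduces to elementary manipulation of Hom-sheaves and to the fact that the singular locus of a normal variety has codimension at least two.
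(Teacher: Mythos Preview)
The paper does not supply its own proof of this lemma; it is stated with a citation to \cite[Tag 0AY6]{stacks} and used as a black box. So there is no in-paper argument to compare against, and your cyclic proof $(1)\Rightarrow(2)\Rightarrow(3)\Rightarrow(1)$ is an appropriate and essentially correct way to fill in the reference.

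One phrasing in your $(1)\Rightarrow(2)$ step is too loose and should be tightened: it is \emph{not} true in general that ``kernels of maps between $S_2$ sheaves are $S_2$'' or that ``depth is inherited by subsheaves''; e.g.\ the ideal sheaf of a point in a smooth surface is a subsheaf of $\cal O_X$ but has depth $1$ at that point. What makes your specific situation work is that the image of $\cal O_X^b\to\cal O_X^a$ is a subsheaf of a free sheaf, hence torsion free, hence has depth $\geq 1$ at every non-generic point; the depth lemma applied to $0\to K\to\cal O_X^b\to I\to 0$ then gives $\depth K\geq\min(\depth\cal O_X^b,\depth I+1)\geq 2$ at points of codimension $\geq 2$. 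With that correction the argument goes through.
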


\subsubsection{Determinants}

The following result will allow us to define the determinant of a torsion-free sheaf that is not necessarily reflexive.

\begin{lemma}[\cite{ishii} Proposition 5.1.7]\label{lem:big_locally_free}
Let $X$ be a normal variety over $k$ and $\cal F$ a torsion-free coherent sheaf on $X$. Then there is a big open subset $j\colon U\hookrightarrow X$ such that $\cal F|_U$ is locally free.
\end{lemma}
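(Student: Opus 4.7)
The plan is to take $U$ to be the locus on which $\cal F$ is locally free, then verify two things: (i) this locus is open, and (ii) its complement has codimension at least $2$, using normality of $X$.

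First I would recall that for a coherent sheaf on a Noetherian scheme, the locally free locus is automatically open; this is a standard consequence of the fact that coherent sheaves are finitely presented, so the minimal number of generators is upper semicontinuous. Thus $U$ is open, and it only remains to show $\codim_X(X\setminus U)\geq 2$, i.e.\ that every codimension $1$ point of $X$ lies in $U$.

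Next I would use normality. For any codimension $1$ point $\eta\in X$, the local ring $\cal O_{X,\eta}$ is a normal Noetherian local domain of dimension $1$, and hence a discrete valuation ring. By the structure theorem for finitely generated modules over a principal ideal domain, any finitely generated torsion-free module over a DVR is free. Since $\cal F$ is torsion-free by hypothesis, its stalk $\cal F_\eta$ at such a point is free of finite rank. By coherence (and openness of the locally free locus), this freeness spreads to an open neighborhood of $\eta$, so $\eta\in U$.

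Combining these two steps, $U$ is an open subscheme of $X$ containing every codimension $1$ point, so its complement $X\setminus U$ has codimension at least $2$, proving $U$ is big. I do not anticipate a genuine obstacle here; the only subtle point is the reduction ``torsion-free over a DVR implies free,'' and even that is classical. This justifies immediately using \autoref{lem:big_locally_free} to define $\det \cal F:=j_*\det(\cal F|_U)$ as a rank $1$ reflexive sheaf (equivalently, a Weil divisor class), since pushforward from a big open subset of a normal variety produces a reflexive sheaf by \autoref{lem:reflexive}.
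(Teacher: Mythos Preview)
Your argument is correct and is the standard one: the locally free locus of a coherent sheaf is open, and at any codimension~$1$ point of a normal variety the stalk is a finitely generated torsion-free module over a DVR, hence free. The paper itself does not give a proof of this lemma but simply cites \cite{ishii}, so there is nothing further to compare.
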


\begin{definition}\label{def:det}
	Given a coherent sheaf $\cal F$ on a normal variety $X$, we define a reflexive sheaf $\det\cal F$ called the  \emph{determinant} in increasing generality:
	\begin{enumerate}
		\item\label{itm:det_refl}

If $\cal F$ is reflexive then
	$\det\cal F=(\bigwedge^{\rank\cal F}\cal F)^{\vee\vee}$.
	
	\item\label{itm:det_torsion_free}
If $\cal F$ is torsion free 
then $\det\cal F=i_*\det(\cal F|_U)$,
where $U$ is a big open subset of $X$ on which $\cal F$ is locally free by \autoref{lem:big_locally_free}
\item \label{itm:det_torsion}

If $0\to\cal G\to\cal F'\to \cal F\to 0$ is an exact sequence of sheaves with $\cal G$ and $\cal F'$ torsion free, we define \[\det \cal F=(\det \cal F'\otimes\det\cal G^{-1})^{\vee\vee}.
\]

\end{enumerate}
\end{definition}

\begin{remark}\ 
	
	\begin{itemise}
		\item
	It follows from \autoref{lem:reflexive} that \autoref{itm:det_torsion_free} makes sense and agrees with \autoref{itm:det_refl} when $\cal F$ is reflexive.  \cite[\href{https://stacks.math.columbia.edu/tag/0B38}{Tag 0B38}]{stacks} together with \autoref{lem:reflexive} and \autoref{lem:big_locally_free} 
		implies that \autoref{itm:det_torsion} agrees with \autoref{itm:det_torsion_free} when $\cal F$ is torsion free. 
		\item
		These definitions agree with those which would be obtained by restricting to the big open subset on which all the torsion free sheaves which appear are locally free, then reflexivising.
\end{itemise}
	\end{remark}

\subsubsection{Purely inseparable Galois theory}\label{section:pi_galois}

In this subsection we describe a Galois theory for certain purely inseparable morphisms.  First we clear up some notation surrounding Frobenius.

\begin{definition}
	
	Let $X$ be a scheme over a field of characteristic $p>0$. 
	The \emph{absolute Frobenius} morphism on $X$ is the endomorphism $F_X\colon X\to X$
	induced by the $p\textsuperscript{th}$ power map on rings.  
	
	Now suppose $X$ comes with a morphism $f\colon X\to S$.
	Let $X'=X\times_S S$ the base change of $X$ by $F_S\colon S\to S$.
	The \emph{relative Frobenius} morphism of $X$ over $S$ is the induced $S$-morphism $F_{X/S}\colon X\to X'$.
	\[
	\xymatrix{
X\ar^{F_{X/S}}[r]\ar[dr]\ar@/^2.0pc/[rr]^{F_X} & X'\ar[r]^{\phi}\ar[d] & X\ar[d]\\
&S\ar^{F_S}[r] & S	
}\]

	If $k$ is a perfect field and $S=\Spec k$ then $F_k\colon \Spec k\to\Spec k$ is an isomorphism, so $\phi\colon X'\to X$ is also an isomorphism of schemes (but not of $k$-schemes).   Throughout this article, there will usually be a fixed perfect ground field $k$, where $K$ is the function field of a variety over $k$.  In this situation, we denote the relative Frobenius and its target by  $F\colon X\to X^p$.  This is a $k$-morphism and agrees with the global Frobenius $F_X$ up to the isomorphism described above. 
\end{definition}

\begin{definition}\label{defn:height_one_morphism}
	A finite purely inseparable morphism $f\colon X\to Y$
	is said to be of \emph{height one} if there is a finite purely inseparable morphism $g:Y\to X$ such that 
	$g\circ f=F_X$.
\end{definition}

For field extensions this agrees with height as defined in \autoref{not:fields}.
In the case where $k$ is perfect and $f\colon X\to Y$ is a $k$-morphism, there is a $k$-morphism $Y\to X^p$ such that the composition is relative Frobenius of $X$ over $k$.

Let $Y$ be a variety over a perfect field $k$
of characteristic $p>0$.  
A ($k$-)\emph{derivation} is (locally) a $k$-linear map $\Delta\colon \cal O_Y\to\cal O_Y$ satisfying the Leibniz rule.  It can be composed with itself $p$ times, giving a function $\Delta^p$ which we call the $p$\textsuperscript{th} power of $\Delta$.  In characteristic $p>0$, $\Delta^p$ also satisfies the Leibniz rule and so is also a derivation.  
The tangent sheaf $\cal T_{Y/k}$ identifies with the sheaf of $k$-derivations on $Y$.
\begin{definition}
A \emph{foliation} on a variety $Y$ over a perfect field of characteristic $p$ is a coherent saturated subsheaf $\cal F\subset\cal T_Y$ that is closed under Lie brackets and $p$\textsuperscript{th} powers.
\end{definition}
\begin{remark}
The condition of closedness under Lie brackets is in fact redundant by  \cite{gerstenhaber_galois_1964}.  Therefore it is enough to check saturation and $p$-closedness.
\end{remark}

A foliation $\cal F$ of rank $r$ on a variety $Y$ defines a height one purely inseparable $k$-morphism 
$\phi\colon Y\to Z$ of degree $p^{r}$ by taking $Z=\underline{\Spec}_{Y^p}\cal A$,
where $\cal A$ is the subsheaf of $\cal O_Y$ killed by all sections of $\cal F$, which satisfies $\cal O_Y^p\subset\cal A\subset\cal O_Y$.
$Z$ is called the quotient of $Y$ by $\cal F$.

On the other hand, given a height one purely inseparable morphism $Y\to Z$ of $k$-varieties, the sheaf $\cal T_{Y/Z}$ of derivations on $Y$ that vanish on $\cal O_Z\subset\cal O_Y$ is a subsheaf of $\cal T_{Y}$ that is closed under $p$\textsuperscript{th} powers. If $Y$ is normal, then $\cal T_{Y/Z}\subset\cal T_{Y}$ is saturated and hence a foliation on $Y$. Moreover, in the normal case, these procedures give the following correspondence:

\begin{lemma}[{\cite[Proposition 2.4]{ekedahl}}]\label{lem:foliation_height_one}
	Let $Y$ be a normal variety over a perfect field $k$ of characteristic $p>0$.
	There is a one-to-one correspondence between the following three sets:
	\begin{enumerate}
		\item Finite purely inseparable $k$-morphisms $\phi\colon Y\to Z$ of height one to a normal variety,
		\item
		intermediate fields $K(Y)\supset L\supset K(Y)^p$, and
		\item
		foliations $\cal F\subset\cal T_{Y}$.
	\end{enumerate}
\end{lemma}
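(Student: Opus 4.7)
The plan is to reduce to a Jacobson-style Galois theory at the function field level and then globalise using the normality of $Y$. At the generic point $\xi$ of $Y$, set $K = K(Y)$ and recall the classical Jacobson correspondence for purely inseparable height-one extensions: intermediate fields $K \supset L \supset K^p$ correspond bijectively with $K$-subspaces $V \subset \Der_k K$ closed under Lie bracket and $p$-th power, via $L \mapsto \Der_L K$ and $V \mapsto K^V$. The theorem of Gerstenhaber cited in the preceding remark makes Lie-closure automatic, so only $p$-closure matters. Since a foliation is by definition a saturated $\mathcal{O}_Y$-submodule of $\mathcal{T}_Y$, it is determined by its generic stalk, a $K$-subspace of $\Der_k K$, so this establishes (2)$\leftrightarrow$(3) generically.

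I would then globalise as follows. From a foliation $\mathcal{F}$ define $\mathcal{A} \subset \mathcal{O}_Y$ to be the subsheaf of local sections killed by every local section of $\mathcal{F}$; since any $k$-derivation annihilates $p$-th powers, $\mathcal{O}_Y^p \subset \mathcal{A}$, so $\mathcal{A}$ is a coherent $\mathcal{O}_{Y^p}$-subalgebra of $\mathcal{O}_Y$ and one forms $Z = \underline{\Spec}_{Y^p}(\mathcal{A})$. Conversely, given a height-one $k$-morphism $\phi\colon Y \to Z$, let $\mathcal{F} = \mathcal{T}_{Y/Z}$; this is an $\mathcal{O}_Y$-submodule of $\mathcal{T}_Y$ and is $p$-closed because the $p$-th power of a $\phi^{-1}\mathcal{O}_Z$-linear derivation remains $\phi^{-1}\mathcal{O}_Z$-linear. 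At the generic point these two procedures recover the Jacobson correspondence, so it remains to verify that each construction actually produces an object of (1) and (3) respectively, and that they globally invert each other.

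The main obstacle is the following equivalence underlying the well-definedness of both constructions: $\mathcal{F} \subset \mathcal{T}_Y$ is saturated if and only if the associated $Z$ is normal. The easy direction uses that if $Z$ is normal then $\mathcal{T}_{Y/Z} = \Omega_{Y/Z}^\vee$ is reflexive and hence, by \autoref{lem:reflexive}, saturated in $\mathcal{T}_Y$ on the big open subset where both are locally free, extending everywhere by normality of $Y$. For the converse I would restrict to the big open $U \subset Y$ on which $\mathcal{F}$ is a subbundle of $\mathcal{T}_Y$, form the smooth quotient $Z^\circ$ there directly from $\mathcal{A}|_U$, and then show that the saturation of $\mathcal{F}$ together with normality of $Y$ forces $\mathcal{A}$ to be reflexive as an $\mathcal{O}_{Y^p}$-module, so $Z$ satisfies $S_2$ and is regular in codimension one (the latter extending from $Z^\circ$ since the complement has codimension at least two under finiteness of $Y \to Z$). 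Mutual invertibility of the two constructions then follows from their agreement at the generic point together with reflexivity arguments extending the comparison over all of $Y$, which yields the three-way bijection.
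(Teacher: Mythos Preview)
The paper does not supply its own proof of this lemma: it is quoted verbatim from Ekedahl (Proposition~2.4) and used as a black box, so there is nothing in the paper to compare your argument against.

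That said, your outline is essentially the standard one and matches Ekedahl's strategy: Jacobson correspondence at the generic point, then globalise via the annihilator construction $\mathcal{A}=\ker(\mathcal{O}_Y\to\sheafhom(\mathcal{F},\mathcal{O}_Y))$ in one direction and $\mathcal{F}=\mathcal{T}_{Y/Z}$ in the other. Two small comments. First, the saturation of $\mathcal{T}_{Y/Z}$ in $\mathcal{T}_Y$ follows more directly than you indicate: dualising the cotangent sequence gives $0\to\mathcal{T}_{Y/Z}\to\mathcal{T}_Y\to(\phi^*\Omega_Z)^\vee$, and any dual sheaf is torsion-free, so the cokernel is torsion-free without invoking normality of $Z$ or passing to a big open. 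Second, the converse (saturation of $\mathcal{F}$ forces $Z$ normal) is the step that genuinely uses saturation and normality of $Y$, and your sketch here is thin. One clean way to fill it: saturation makes $\mathcal{F}$ reflexive, hence $\mathcal{F}=j_*j^*\mathcal{F}$ for the big open $j\colon U\hookrightarrow Y$ where $\mathcal{F}$ is a subbundle; a direct check then shows $\mathcal{A}=j_*(\mathcal{A}|_U)$ using normality of $Y$, so $\mathcal{A}$ is $S_2$ as an $\mathcal{O}_Y$-module and hence $Z$ is $S_2$; regularity in codimension one then comes from the local smoothness of the quotient on $U$ and finiteness of $Y\to Z$. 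With that in place your mutual-inverse argument via agreement at the generic point plus reflexivity goes through.
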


There is  a canonical bundle formula for quotients by foliations:
\begin{proposition}[{\cite[Corollary 3.4]{ekedahl}, \cite[Proposition 2.10]{pw}}]\label{canonicalbundleformula}
Let $Y\to X$ be a finite purely inseparable morphism of height one of normal varieties over a perfect field $k$ of characteristic $p>0$, and let $\cal F$ be the corresponding foliation. Then
\[ \omega_{Y/X} \cong\det\cal F^{\otimes[p-1]}.
\]
\end{proposition}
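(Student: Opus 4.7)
The plan is to produce a canonical isomorphism on a big open subset of $Y$ via the cotangent exact sequence together with the height-one factorisation of $\phi$, then extend by reflexivity.

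Both $\omega_{Y/X}$ and $(\det\cal F)^{\otimes(p-1)}$ are rank-one reflexive sheaves on the normal variety $Y$, so by \autoref{lem:reflexive} it suffices to construct a canonical isomorphism on a big open subset. Take $U \subseteq Y$ to be the intersection of the smooth locus of $Y$ with the preimage of the smooth locus of $X$; this is big since $\phi$ is finite and $X$, $Y$ are normal. On $U$, by \autoref{lem:foliation_height_one} one may choose local coordinates $x_1,\ldots,x_n$ on $Y$ with $\cal F = \langle\partial_{x_1},\ldots,\partial_{x_r}\rangle$, so that $X$ is locally $\Spec k[x_1^p,\ldots,x_r^p,x_{r+1},\ldots,x_n]$.

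On $U$, the four-term cotangent exact sequence
\[
0 \to \cal K \to \phi^*\Omega_{X/k} \to \Omega_{Y/k} \to \Omega_{Y/X} \to 0
\]
of locally free sheaves yields $\omega_{Y/X} \cong \det\Omega_{Y/X} \otimes (\det\cal K)^{-1}$. The evaluation pairing identifies $\Omega_{Y/X}$ canonically with $\cal F^\vee$, so $\det\Omega_{Y/X} \cong (\det\cal F)^{-1}$. To identify $\det\cal K$, use the height-one factorisation $F = \psi \circ \phi$ for $\psi \colon X \to Y^p$ (where $F$ is the relative Frobenius of $Y$ over $k$), and let $\cal G = \cal T_{X/Y^p}$ be the companion foliation on $X$, with quotient $\cal Q = \cal T_X/\cal G$. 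Since the Frobenius differential vanishes, $\cal T_Y \to \phi^*\cal T_X$ factors as $\cal T_Y \twoheadrightarrow \phi^*\cal G$, giving an exact sequence $0 \to \cal F \to \cal T_Y \to \phi^*\cal G \to 0$ on $U$, and dually shows that $\cal K \subseteq \phi^*\Omega_{X/k}$ coincides with $\phi^*\cal Q^\vee$.

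The crux is a local computation: in foliated charts, the transition function for $\det\cal Q$ on $X$ is the Jacobian determinant of the induced change of coordinates $y'_i = (x'_i)^p$ (where $y_i = x_i^p$ are coordinates on $X$), which by the freshman's dream in characteristic $p$ equals the $p$-th power of the Jacobian determinant of the change $x'_i = f_i(x)$ on $Y$. This yields $\phi^*\det\cal Q \cong (\det\cal F)^{\otimes p}$, whence $\det\cal K = \phi^*\det\cal Q^\vee \cong (\det\cal F)^{\otimes(-p)}$. Substituting gives
\[
\omega_{Y/X} \cong (\det\cal F)^{-1} \otimes (\det\cal F)^{\otimes p} \cong (\det\cal F)^{\otimes(p-1)}
\]
canonically on $U$, extending uniquely to $Y$ by reflexivity. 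The main obstacle is verifying that the identification $\phi^*\det\cal Q \cong (\det\cal F)^{\otimes p}$ is canonical and respects transition functions between foliated charts; once the freshman's-dream formula above is established locally, naturality of the determinant construction ensures it glues to a global isomorphism.
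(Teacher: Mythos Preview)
The paper does not give its own proof of this proposition; it is stated with citations to \cite[Corollary~3.4]{ekedahl} and \cite[Proposition~2.10]{pw} and used as a black box. Your argument is essentially Ekedahl's, and the overall strategy---reduce to a big open set by reflexivity, use the four-term cotangent sequence, and bring in the companion foliation $\cal G$ via the height-one factorisation $F=\psi\circ\phi$---is correct.

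Two points deserve tightening. First, your appeal to \autoref{lem:foliation_height_one} for the existence of foliated local coordinates $x_1,\dots,x_n$ with $\cal F=\langle\partial_{x_1},\dots,\partial_{x_r}\rangle$ is not quite right: that lemma only gives the correspondence between foliations and height-one quotients. The local splitting of a $p$-closed saturated subsheaf of $\cal T_Y$ on the smooth locus is a separate (and in characteristic $p$, nontrivial) fact---it is the content of Ekedahl's local structure result, and you should cite it directly rather than \autoref{lem:foliation_height_one}.

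Second, the ``freshman's dream'' justification of $\phi^*\det\cal Q\cong(\det\cal F)^{\otimes p}$ is correct in spirit but hand-wavy as written: the transition functions of $\det\cal Q$ are not literally Jacobians of the $y_i$ alone, since a change of foliated chart on $Y$ mixes the $y$- and $x_{>r}$-coordinates on $X$. The cleaner and canonical route is to note that the map $\cal T_X\to\psi^*\cal T_{Y^p}$ has kernel $\cal G$ and image $\psi^*\cal F^{(p)}$, whence $\cal Q\cong\psi^*\cal F^{(p)}$; then
\[
\phi^*\det\cal Q\;\cong\;\phi^*\psi^*\det\cal F^{(p)}\;=\;F_{Y/k}^*\det\cal F^{(p)}\;\cong\;F_Y^*\det\cal F\;\cong\;(\det\cal F)^{\otimes p},
\]
using only that absolute Frobenius pullback of a line bundle is its $p$-th tensor power. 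This replaces the transition-function check by a one-line intrinsic argument and makes canonicity automatic.
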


\subsection{Grassmannians and linear systems}\label{sec:grassmannians}

Let $S$ be a variety and $\cal E$ a vector bundle on $S$.  We define the Grassmannian $\mathrm{Gr}_S(r,\cal E)$ of $r$-planes on $\cal E$ to be the variety over $S$ which parametrises $r$-dimensional subbundles of $\cal E$.  That is, for a morphism $f\colon X\to S$, giving a diagram
\[
\xymatrix{
X\ar[r]\ar[dr] & \mathrm{Gr}_S(r,\cal E)\ar[d]\\
& S
}
\]
is the same as giving an exact sequence of locally free sheaves
\[\xymatrix{
0\ar[r] & \cal F\ar[r] & f^*\cal E\ar[r] & \cal Q\ar[r] & 0}
\]
with $\rank \cal F=r$.  Furthermore, to give this data, it is sufficient to give the surjection
\[\xymatrix{
f^*\cal E\ar@{->>}[r] & \cal Q}
\]
with $\rank\cal Q=\rank\cal E-r$, for then the kernel is automatically a subbundle.

Furthermore, if we are given a torsion free sheaf $\cal Q$ and a surjection $f^*\cal E\to \cal Q$ then we obtain a rational map $X\dashrightarrow \mathrm{Gr}_S(r,\cal E)$ which is a morphism away from the points where $\cal Q$ is not locally free.  This has codimension at most $2$ by \autoref{lem:big_locally_free}.

Let $X$ be a normal variety with morphism $f\colon X\to S=\Spec(A)$.  Suppose $H^0(X,\cal O_X)=A$, and let $\cal L$ be a  torsion free sheaf of rank one, together with a surjection
	\[\xymatrix{
		\cal O_X^{\oplus d}\ar[r]^-\alpha & \cal L\ar[r] & 0}.
	\]
Then we can define a linear system of Weil divisors associated to the map $\alpha$ by restricting to the complement of the codimension $2$ locus on $X$ where $\cal L$ fails to be locally free, and then taking the closure of the resulting Weil divisors.  This is equivalent to taking the double dual of the above surjection.  Doing this, we obtain a linear system $\frak{M}\subset |\cal O_X(\cal L^{\vee\vee})|$.  We will denote the corresponding subspace of $H^0(X,\cal O_X(\cal L^{\vee\vee}))$ by $V(\frak M)$.  We denote the dimension of $V(\frak M)$ by $\dim_A V(\frak M)$ and the dimension of $\frak M$ by $\dim \frak M$.  So by definition $\dim \frak M=\dim_A V(\frak M)-1$.

\subsection{Flatness and Stein factorisation}

This subsection includes some lemmas which we need later, which are presumably well known but for which we could find no satisfactory reference.

\begin{lemma}\label{lem:flatification}
	Let $g\colon Y\to T$ be a projective dominant morphism of normal varieties.  Then there is an open subset $W\subseteq T$ with $\codim(T\setminus W)\geq 2$ such that $Y_W$ is flat over $W$.
\end{lemma}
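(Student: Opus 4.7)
The plan is to identify the flat locus of $g$ as a big open subset of $T$, using the normality hypotheses together with properness of $g$.

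First, I would recall two standard facts that set up the argument. Since $g$ is locally of finite presentation between Noetherian schemes, the locus $F\subseteq Y$ where $g$ is flat is open. Since $g$ is projective and therefore proper, the image $g(Y\setminus F)$ is a closed subset of $T$. So taking $W=T\setminus g(Y\setminus F)$ produces an open subset of $T$ over which $g$ becomes flat (because $g^{-1}(W)\subseteq F$). The entire content of the lemma is then to show that $T\setminus W$ has codimension at least $2$, equivalently that every codimension $1$ point of $T$ lies in $W$.

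Next, I would verify flatness at every point $y\in Y$ whose image $\eta=g(y)$ has codimension $1$ in $T$. Because $T$ is normal, it is regular in codimension $1$, so the local ring $\mathcal{O}_{T,\eta}$ is a DVR. The dominance of $g$ together with the integrality of $Y$ implies that the induced local homomorphism $\mathcal{O}_{T,\eta}\hookrightarrow\mathcal{O}_{Y,y}$ is injective into a domain, so $\mathcal{O}_{Y,y}$ is torsion-free over the DVR $\mathcal{O}_{T,\eta}$. A torsion-free module over a DVR is flat, hence $g$ is flat at $y$, so $y\in F$ and $\eta\in W$.

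Combining these, $g(Y\setminus F)$ is a closed subset of $T$ containing no codimension $1$ points, so $\codim_T(T\setminus W)\geq 2$, and $Y_W\to W$ is flat as required. There is no genuine obstacle here; the only point that requires the hypotheses is the DVR argument at codimension $1$ points of $T$, which uses normality of $T$ (to get that codimension $1$ local rings are DVRs) and integrality of $Y$ together with dominance of $g$ (to get torsion-freeness). Normality of $Y$ is not actually needed beyond integrality, but it is consistent with the paper's convention that varieties are integral.
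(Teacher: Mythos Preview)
Your argument is correct and takes a genuinely different, more elementary route than the paper. The paper invokes the Raynaud--Gruson flatification theorem to produce a modification $\mu\colon T'\to T$ over which the proper transform of $g$ is flat, and then defines $W$ as the largest open over which $\mu$ has finite fibres; normality of $T$ and Zariski's main theorem force $\mu$ to be an isomorphism over $W$, and irreducibility of $T'$ forces $T\setminus W$ to have codimension $\geq 2$. Your approach bypasses Raynaud--Gruson entirely: you take $W$ to be the complement of the (closed) image of the non-flat locus, and verify directly that every codimension $1$ point of $T$ lies in $W$ via the elementary fact that a domain containing a DVR is torsion-free and hence flat over it. Your proof is shorter, uses only the normality of $T$ in codimension $1$ and the integrality of $Y$, and makes the role of the hypotheses more transparent; the paper's approach, by contrast, is an instance of a general technique that would also yield a flat model after modification rather than mere restriction, though that extra strength is not needed here.
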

\begin{proof}
	By the Raynaud--Gruson flatification (\cite{raynaudgruson} Theorem 5.2.2) there exists a modification $\mu\colon  T'\to T$ such that the proper transform of $g$ under $\mu$ is a flat morphism.  This is defined by the following diagram:
	
	\[\xymatrixcolsep{3pc}\xymatrix{
		Y\times_T T' \\
		\overline{Y\times_T U} \ar@{^{(}->}[u] \ar[r] \ar[d]^-{\text{flat}} & Y \ar[d]^-g \\
		T' \ar[r]^-\mu_-{\text{birational}} & T \\
		\mu^{-1}(U) \ar[r]^-{\mu|_{\mu^{-1}(U)}} \ar@{^{(}->}[u] & U \ar@{^{(}->}[u]}
	\]
where $U$ is an open dense subscheme of $T$ over which $\mu$ is an isomorphism.
	
	Let $W$ be the largest open subset of $T$ over which $\mu$ has finite fibres.  Then $\mu$ is finite over $W$ and moreover by normality of $W$, $\mu$ is an isomorphism over $W$.  Furthermore, as $T'$ is irreducible, $T\setminus W$ must have codimension at least $2$.  But as $\mu$ is an isomorphism over $W$, then $Y_W\times_W{T'_W}\to Y_W$ is an isomorphism.  Thus $Y_W\to W$ is flat.
\end{proof}

\begin{lemma}\label{lem:stein_characterisation}
	Let $f\colon X\to S$ be a proper morphism of irreducible  Noetherian schemes with $X$ normal.  The following conditions are equivalent:
	\begin{enumerate}
		\item\label{itm:push_forward} $f_*\cal O_{X}=\cal O_S$,
		\item\label{itm:integrally_closed} $S$ is integrally closed in $X$, and
		\item\label{itm:function_fields} $S$ is normal and $K(S)$ is integrally closed in $K(X)$.
	\end{enumerate}
\end{lemma}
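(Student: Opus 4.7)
The plan is to establish the cyclic implications $(1) \Rightarrow (2) \Rightarrow (3) \Rightarrow (1)$.  Throughout I tacitly assume $f$ is dominant, so that $K(S) \hookrightarrow K(X)$; if $f$ were not dominant then its image would be a proper closed subset of $S$ and $(1)$ would fail on any open meeting the complement of the image, while $(3)$ would not be meaningful.

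The implication $(1) \Rightarrow (2)$ is formal: if $f_*\cal O_X = \cal O_S$, then for each affine open $U \subseteq S$ the ring map $\cal O_S(U) \to \cal O_X(f^{-1}(U))$ is an isomorphism, so $\cal O_S(U)$ is trivially integrally closed in $\cal O_X(f^{-1}(U))$.

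For $(2) \Rightarrow (3)$, the key input is the normality of $X$: for any open $V \subseteq X$, the identity $\cal O_X(V) = \bigcap_{p \in V^{(1)}} \cal O_{X,p}$ holds, and each stalk at a codimension-one point is a DVR hence integrally closed in $K(X)$. Given $a \in K(X)$ integral over $K(S)$, shrink $S$ to an affine open $U = \Spec A$ on which all coefficients of a monic relation for $a$ are regular; then $a$ is integral over $A$, hence over each $\cal O_{X,p}$ for $p \in f^{-1}(U)^{(1)}$, hence lies in $\cal O_X(f^{-1}(U))$. Assumption $(2)$ then forces $a \in A \subseteq K(S)$, proving $K(S)$ is integrally closed in $K(X)$. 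Replaying the same argument with $a \in K(S)$ integral over $A$ (viewed inside $K(X)$) yields $a \in A$, i.e.\ normality of $S$.

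The substantive implication is $(3) \Rightarrow (1)$. Since $f$ is proper and $S$ is Noetherian, Grothendieck's finiteness theorem gives that $f_*\cal O_X$ is coherent, so for $U = \Spec A \subseteq S$ affine, $B := \cal O_X(f^{-1}(U))$ is finite over $A$. Each element $b \in B$ lies in $K(X)$ and is integral over $A$, hence over $K(S)$; by $(3)$, $b \in K(S)$, and being integral over the normal ring $A$ while lying in its fraction field, $b \in A$. Thus $B = A$, establishing $(1)$.  The main bookkeeping difficulty I anticipate is in $(2) \Rightarrow (3)$, where both normality of $S$ and integral closedness of $K(S)$ in $K(X)$ must be teased out of the affine-level statement about $\cal O_S(U) \subseteq \cal O_X(f^{-1}(U))$; in particular, one must use normality of $X$ to reduce statements about the function fields to statements about sections, so as to invoke~$(2)$.
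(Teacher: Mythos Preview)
Your proof is correct, and it takes a genuinely different route from the paper's. The paper argues the equivalence of \autoref{itm:push_forward} and \autoref{itm:integrally_closed} structurally, by invoking the Stein factorisation $X\to S'\to S$ and the characterisation of $S'$ as the integral closure of $S$ in $X$ (Stacks Project, Tag 03H0); for \autoref{itm:integrally_closed}$\Rightarrow$\autoref{itm:function_fields} it runs a geometric argument, producing a nontrivial finite $T\to S$ from a hypothetical element of $K(X)$ integral over $K(S)$ but not in $K(S)$, and then using a graph-closure and Zariski's main theorem to show the rational map $X\dashrightarrow T$ is actually a morphism, contradicting \autoref{itm:integrally_closed}. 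By contrast, your argument is purely ring-theoretic: you use Krull's description of a normal domain as the intersection of its localisations at height-one primes to pass from integrality over $K(S)$ to membership in $\cal O_X(f^{-1}(U))$, and you use coherence of $f_*\cal O_X$ directly for \autoref{itm:function_fields}$\Rightarrow$\autoref{itm:push_forward}.

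Your approach is more elementary and self-contained; it avoids citing external results and sidesteps the rational-map machinery. The paper's approach has the virtue of making the role of Stein factorisation explicit (which is the reason the lemma is included), but as a proof of the bare statement yours is cleaner. One minor remark: in your \autoref{itm:integrally_closed}$\Rightarrow$\autoref{itm:function_fields} step you might note that the identity $\cal O_X(V)=\bigcap_{p\in V^{(1)}}\cal O_{X,p}$ for non-affine $V$ follows from the affine case by taking an affine cover, since sections are determined locally; this is implicit in what you wrote but worth a sentence.
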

\begin{proof}
	Let $X\to S'$ be the Stein factorisation of $f$.  Assume \autoref{itm:push_forward}, then $S'=S$ and so \autoref{itm:integrally_closed} follows from \cite[\href{https://stacks.math.columbia.edu/tag/03H0}{Tag 03H0}]{stacks}.  Conversely, assuming \autoref{itm:integrally_closed}  then we also have $S'=S$ by \cite[\href{https://stacks.math.columbia.edu/tag/03H0}{Tag 03H0}]{stacks} and so \autoref{itm:push_forward} follows.  \autoref{itm:function_fields} implies \autoref{itm:integrally_closed} is clear, so we must prove the converse. 
	
	Suppose $S$ is integrally closed in $X$, but $K(S)$ is not integrally closed in $K(X)$.  Then there is some finite morphism $T\to S$ such that $K(S)\to K(T)\to K(X)$.  We then get a rational map $g\colon X\dashrightarrow T\to S$ factoring $X\to S$.  
	
	Let $Y\subset X\times T$ be the closure of the graph of $g$.  It is sufficient to show that $\phi\colon Y\to X$ is an isomorphism.  Suppose that $P$ is a fundamental point of $\phi^{-1}$, so that $\phi^{-1}$ is not defined at $P$.  By Zariski's main theorem, the total transform has dimension at least $1$, and contains a curve $C$.  But then consider the composition $Y\to Y_S\subset X\times S\to X$ will also contract the image of $C$, and so $X\dashrightarrow S$ is not a morphism.  We have a contradiction.
	
\end{proof}

\section{Canonical bundle formula}\label{sec:reinterpret}

The following  notational conventions will be used throughout the paper.

\subsection{Notational conventions}\label{sub:set-up}
	
	\begin{itemise}
		\item
	$f\colon \EuScript{X}\to S$ will be a projective morphism of normal varieties over a perfect field $k$ of characteristic $p>0$, satisfying $f_*\cal O_{\EuScript{X}}=\cal O_{S}$.  
	\item 
	The function field of $S$ will be $K$, and the generic fibre of $f$ will be $X$.  These are the varieties which appear in \autoref{main_theorem}.
	\item
	$T$ will be a normal variety with a purely inseparable height one $k$-morphism $\tau\colon T\to S$ (\autoref{defn:height_one_morphism}).  
	\item 
	We will denote $(\EuScript{X}\times_ST)_{\red}$ by $\cal Z$, and $\cal Y$ will be the normalisation of $\cal Z$.   Often we will refer to $\cal Y$ as the normalisation of $\EuScript{X}\times_{S} T$, with the understanding that this does not imply that $\EuScript{X}\times_S T$ is reduced.
	\item
	Note that the scheme $\EuScript{X}\times_ST$ is reduced if and only if it satisfies the conditions $S_1$ and $R_0$.  However, the  $S_1$ condition is preserved under flat base change by \cite[Theorems 15.1 and 23.3]{matsumura_commutative_1989}, so if $T\to S$ is flat, reducedness of $\EuScript{X}\times_S T$ is equivalent to regularity at its generic point.
	
	\item 
	We will always remove a subset of $S$ of codimension at most $2$ to ensure that $\EuScript{X}\to S$ and $\cal Y\to T$ are flat, using \autoref{lem:flatification}.  Furthermore, we can similarly remove a codimension $2$ set to ensure that $\cal T_{T/S}$ is locally free. The codimension condition ensures this makes no difference in determining Weil divisors.
	\item
	Denote the ideal sheaf of $\cal Z$ in $\EuScript{X}\times_S T$ by  $I_\cal Z$, or just $I$ if no confusion can arise.
	\item
	Denote the morphisms between these schemes as follows:

\[\xymatrix{
\cal Y \ar[r]_-\nu \ar@/^1pc/[rrr]^-\phi \ar[rrd]_-g & \cal Z\ar[r]^-i\ar[rd]^-h& \EuScript{X}\times_S T \ar[r] \ar[d]^-{f_T} & \EuScript{X} \ar[d]^-f \\
& & T \ar[r]^-\tau & S}
\]
	\item
	Let $\cal F_{\cal Y/\EuScript{X}}\subset\cal T_{\cal Y}$ be the foliation corresponding to $\phi$, and let $\cal T_{T/S}\subset\cal T_{T/k}=\cal T_{T/T^p}$ be that corresponding to $\tau$ (see \autoref{lem:foliation_height_one}).  We drop the subscripts whenever no confusion can arise.   
\item The dimension of $\cal X$ will be denoted by $n$, and the rank of $\cal F$ will be denoted by $r$.
	
\item
In addition to denoting the function field of $S$ by $K$, denote that of $T$ by $L$, and drop the calligraphic fonts to denote generic fibres, giving a diagram of schemes over function fields as follows:

\[\xymatrix{
	Y \ar[r]_-\nu \ar@/^1pc/[rrr]^-\phi \ar[rrd]_-g &  Z\ar[r]^-i\ar[rd]^-h& X\otimes_K L \ar[r] \ar[d]^-{f_L} &  X \ar[d]^-f \\
	& & L\ar[r]^-\tau & K}
\]

In particular, we will abuse notation by dropping $\Spec$ in e.g. $\Spec K$, and use $X\otimes_K L$ in place of $X\times_{\Spec K}\Spec L$.

\item  Recall that the Frobenius morphisms considered will all be the relative Frobenius over $\Spec k$, which matches absolute Frobenius up to a canonical isomorphism of the target.

\item The objects defined in the remainder of this section will also be used without further comment throughout the article.
\end{itemise}

\subsection{Construction}\label{sub:pw}

Our construction begins where the proof of \cite[Theorem 3.1]{pw} ends, so we first summarise the constructions made in that proof.  We start with the cotangent sequence
\[\xymatrix{
	g^*\Omega_{T/k} \ar[r] & \Omega_{\cal Y/k} \ar[r] & \ar[r] \Omega_{\cal Y/T} \ar[r] & 0}.
\]

Recall that we assume throughout that we have already replaced  $T$ and $S$ by big open subsets so that $g$ is flat, so we have that $(g^*\Omega_{T/k})^\vee\cong g^*\cal T_{T/k}$.  Thus dualising the left-hand map gives a map $\cal T_{\cal Y/k}\to g^*\cal T_{T/k}$.  It is shown in the proof of point (b) of \cite[Theorem 3.1]{pw} that, if $\cal F:=\cal F_{\cal Y/\EuScript{X}}$ is the foliation corresponding to $\cal Y\to\EuScript{X}$, the composition
\[\cal F\hookrightarrow \cal T_{\cal Y/k}\to g^*\cal T_{T/k}
\]
is injective.
Furthermore, the proof of point (c) of \cite[Theorem 3.1]{pw} also shows that the image of $\cal F$ lands inside $g^*\cal T_{T/S}$, which after shrinking $S$ and $T$ to ensure flatness, is a subsheaf of $g^*\cal T_{T/k}$.
We have already shrunk the base to a big open subset in \autoref{sub:set-up} to ensure that  $\cal T_{T/S}$  is locally free.  
This gives the key injection
\[\cal F\hookrightarrow g^*\cal T_{T/S}.
\]

\begin{definition}
Define $\cal F'_{T/S}$ to be the saturation of $\cal F$ inside $g^*\cal T_{T/S}$, where again we drop the subscript if no confusion can arise.  
\end{definition}

We have an inclusion $\cal F\hookrightarrow \cal F'$ between sheaves of equal rank.  This gives a global element of $\Hom_\cal Y(\det\cal F,\det\cal F')$, which after restricting to a suitable big open set of $\cal Y$ and then reflexivising, gives a canonical global section in $|\det \cal F'-\det\cal F|=|\det(\cal F'/\cal F)|$.  
\begin{definition}
	Define the fixed part of the linear system to be this canonical section, denoted
	\[\frak F_{T/S}\in |\det(\cal F'/\cal F)|.
	\]
	\end{definition}

\begin{remark}
If $\EuScript{X}\times_S T$ is reduced, then $\frak F_{T/S}$ agrees with the canonical choice of $C$ given in (c) of \cite[Theorem 3.1]{pw}, and in that case $\cal F'=g^*\cal T_{T/S}$.
\end{remark}

\begin{definition}
Now define a torsion free sheaf $\cal Q_{T/S}$ by
\[ \xymatrix{
	0 \ar[r] & \cal F'_{T/S} \ar[r] & g^*\cal T_{T/S} \ar^\alpha[r] & \cal Q_{T/S} \ar[r] & 0}.
\]
\end{definition}

\begin{remark}
Let $U$ be an open affine subset of $T$ on which $\cal T_{T/S}$ is free.  Then after restricting $\cal Y$ to $g^{-1}(U)$, $\det \cal Q_{T/S}=(\bigwedge^{n-r}\cal Q_{T/S})^{\vee\vee}$ is globally generated by $\wedge^r\alpha$ away from some codimension $2$ points by \autoref{lem:reflexive}.   
\end{remark}

Finally, let $h\colon \cal Y\dashrightarrow\mathrm{Gr}(r,\cal T_{T/S})$
be the  
rational map induced by $g^*\cal T_{T/S}\to \cal Q_{T/S}\to 0$ (see \autoref{sec:grassmannians}), and let $\cal W_{\Im}$  be the closure of the image.  As $\mathcal{Y}\to\EuScript{X}$ is of height one (\autoref{defn:height_one_morphism}), there is a rational map 
$\EuScript{X}\to\cal Y^p\dashrightarrow \cal W_{\Im}^{p}$. Let $\cal V$ be the normalisation of $\cal W_{\Im}^{p}$ inside $K(\EuScript{X})$.  By definition of normalisation $K(\cal V)$ is integrally closed in $K(\EuScript{X})$, so it must contain $K(S)$ (for it certainly contains $K(S)^{p}$).  Therefore we get a factorisation $\EuScript{X}\dashrightarrow \cal V\to S$.  

\begin{definition}\label{defn:w_and_v}
Here we define the intermediate varieties in the main result \autoref{main_theorem}.  The variety $\cal V$ was defined in the previous paragraph.  Let $\cal W$ be the normalisation of the maximal reduced subscheme of $\cal V\times_S T$.  By universal properties there is a map $\cal Y\dashrightarrow \cal W$.
\end{definition}

\begin{example}
	In the above setting, $\cal W$ and $\cal W_{\Im}$ may be different; see Example~\ref{exmp:pm_fermat_example} with $m>1$.
\end{example}

The following canonical bundle formula is an extension of \cite[Theorem 3.1]{pw}, which we think should be of independent interest.  The first two terms in the relative canonical divisor correspond to the decomposition of the linear system $\mathfrak{C}$ described in \autoref{main_theorem}.  Note that if we allow shrinking $S$, $\det \cal Q_{T/S}$ becomes effective and so we recover the formula of \cite[Theorem 3.1]{pw}.  

	\begin{theorem}\label{thm:C}
In the situation of \autoref{sub:set-up}, recalling that this involved shrinking $S$ and $T$ to big open subsets, we have a linear equivalence of Weil divisors: 
	\[K_{\cal Y}+(p-1)(\frak F_{T/S}+\det Q_{T/S}-g^*\det\cal T_{T/S})\sim\phi^*K_{\EuScript{X}}.
	\]
	
\end{theorem}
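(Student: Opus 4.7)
The strategy is to combine three ingredients: the canonical bundle formula for foliations applied to the height one morphism $\phi\colon\cal Y\to\EuScript{X}$; the determinant identity coming from the short exact sequence defining $\cal Q_{T/S}$; and the interpretation of $\frak F_{T/S}$ as the Weil divisor associated to the inclusion $\cal F\hookrightarrow\cal F'$.

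First, since $\phi\colon\cal Y\to\EuScript{X}$ is a finite purely inseparable height one morphism of normal varieties with corresponding foliation $\cal F\subset\cal T_{\cal Y}$, \autoref{canonicalbundleformula} gives
\[
\omega_{\cal Y/\EuScript{X}}\cong\det\cal F^{\otimes[p-1]},
\]
which translates at the level of Weil divisor classes into
\[
K_{\cal Y}-\phi^*K_{\EuScript{X}}\sim (p-1)\det\cal F.
\]
So it suffices to show the Weil divisor class identity
\[
\det\cal F\sim g^*\det\cal T_{T/S}-\det\cal Q_{T/S}-\frak F_{T/S}.
\]

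For the second ingredient, recall from \autoref{sub:set-up} that after passing to a big open subset of $S$ (which does not affect Weil divisor classes), the sheaf $\cal T_{T/S}$ is locally free, and so is $g^*\cal T_{T/S}$. Since $\cal F'_{T/S}$ is by construction saturated in $g^*\cal T_{T/S}$, the quotient $\cal Q_{T/S}$ is torsion free, and $\cal F'_{T/S}$ is reflexive (being a saturated subsheaf of a locally free sheaf on a normal variety, one checks reflexivity by \autoref{lem:reflexive}). Applying \autoref{def:det} \autoref{itm:det_torsion} to
\[
0\to\cal F'_{T/S}\to g^*\cal T_{T/S}\to\cal Q_{T/S}\to 0
\]
then gives the class identity $g^*\det\cal T_{T/S}\sim\det\cal F'_{T/S}+\det\cal Q_{T/S}$.

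Finally, $\cal F\hookrightarrow\cal F'_{T/S}$ is an injection of torsion free sheaves of the same rank, yielding a canonical injection $\det\cal F\hookrightarrow\det\cal F'_{T/S}$ whose cokernel's vanishing locus is, by definition of $\frak F_{T/S}$, precisely the associated effective Weil divisor, so $\det\cal F'_{T/S}\sim\det\cal F+\frak F_{T/S}$. Substituting this into the previous identity gives $\det\cal F\sim g^*\det\cal T_{T/S}-\det\cal Q_{T/S}-\frak F_{T/S}$, and multiplying through by $(p-1)$ and combining with the foliation formula produces the desired expression.

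The routine technical point to verify carefully is that all the determinant identities used here genuinely hold as Weil divisor classes on the normal variety $\cal Y$, not merely on the big open locus where every sheaf in sight is locally free; this is guaranteed by taking double duals and by the fact that a linear equivalence of Weil divisors is determined in codimension one. No step looks like a serious obstacle: the main work has already been done in \cite{pw} (the construction of $\cal F\hookrightarrow g^*\cal T_{T/S}$ with image in $g^*\cal T_{T/S}\subseteq g^*\cal T_{T/k}$), and the new content here is simply the bookkeeping of what happens when one does not shrink $S$ so as to make $\det\cal Q_{T/S}$ effective, resulting in the term $\det\cal Q_{T/S}-g^*\det\cal T_{T/S}$ replacing the effective divisor that appears in the original statement in \cite[Theorem 3.1]{pw}.
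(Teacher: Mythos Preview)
Your proposal is correct and follows essentially the same route as the paper: both arguments combine the foliation canonical bundle formula $\omega_{\cal Y/\EuScript{X}}\cong\det\cal F^{\otimes[p-1]}$ with the determinant identities coming from the two short exact sequences $0\to\cal F\to\cal F'\to\cal F'/\cal F\to 0$ and $0\to\cal F'\to g^*\cal T_{T/S}\to\cal Q_{T/S}\to 0$, noting that $\frak F_{T/S}$ is by definition the effective divisor in $|\det(\cal F'/\cal F)|$. The paper's write-up is slightly more compressed but the content is the same.
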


\begin{proof}

Firstly note that it makes sense to discuss the pullback of Weil divisors by $\phi$ because it is a finite morphism.  To be precise, we define such a pullback by restricting to the smooth loci of base and target and then taking the closure in $\cal Y$ of the resulting Cartier divisor pullback.  Note that we are also able to remove arbitrary closed subvarieties of codimension at most $2$ from $\EuScript{X}$ prior to calculating this pullback.  Finally, by equidimensionality, the preimage of a  big open subset of $S$ in $\EuScript{X}$ is also a big open subset.  Therefore we are able to remove such a subset without affecting the computation of Weil divisor pullbacks.
Recall that the assumptions of \autoref{sub:set-up} restricted to big open subsets of $S$ and $T$ to assume that $g$ is flat and $\cal T_{T/S}$ is locally free.

To obtain the canonical bundle formula, we use the exact sequences:
\[ \xymatrix{
0 \ar[r] & \cal F \ar[r] & \cal F' \ar[r] & \cal F'/\cal F \ar[r] & 0},
\]
\[ \xymatrix{
0 \ar[r] & \cal F' \ar[r] & g^*\cal T_{T/S} \ar[r] & \cal Q_{T/S} \ar[r] & 0}.
\]

So we have 
\[[-\det\cal F]\sim[\det(\cal F'/\cal F)]-[\det\cal F']\sim [\det(\cal F'/\cal F)]+[\det\cal Q_{T/S}]-[\det g^*\cal T_{T/S}]
\]
which together with \autoref{canonicalbundleformula} gives the required canonical bundle formula.

\end{proof}

\begin{rem}
Note that if the original $f$ has fibres of constant dimension, then the same formula holds over all of $T$, as Weil divisors extend uniquely from the big open subsets of \autoref{sub:set-up}.
\end{rem}

\section{The fixed part of the linear system}\label{sec:universal_property}

In this section we prove \autoref{main_theorem} \autoref{F_normality} and \autoref{F_support}.   First we give a reinterpretation of $\cal F$ and $\cal F'$.

\begin{proposition}\label{prop:compare_F_Omega}
	In the situation of \autoref{sec:reinterpret}, we have
	\[ \cal F=\sheafhom_{\cal Y}(\Omega_{\cal Y/\EuScript{X}},\cal O_\cal Y)\]
	and on the locus where $g\colon \cal Y\to T$ is flat
	\[ \cal F'=\sheafhom_{\cal Y}(\nu^*\Omega_{\cal Z/\EuScript{X}},\cal O_\cal Y).
	\]
\end{proposition}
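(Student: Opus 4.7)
The first identification is essentially tautological: since $\phi\colon \cal Y\to \EuScript{X}$ is a height one purely inseparable morphism and $\cal Y$ is normal, \autoref{lem:foliation_height_one} identifies the associated foliation $\cal F$ with the sheaf of $\EuScript{X}$-derivations $\cal T_{\cal Y/\EuScript{X}}=\sheafhom_{\cal Y}(\Omega_{\cal Y/\EuScript{X}},\cal O_\cal Y)$, saturated inside $\cal T_{\cal Y}$.

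For the second identification, I assemble the conormal sequence for the closed immersion $i\colon \cal Z\hookrightarrow \EuScript{X}\times_S T$ and pull back. Since $\EuScript{X}\times_S T\to \EuScript{X}$ is the base change of $\tau$, base change of differentials gives $i^*\Omega_{(\EuScript{X}\times_S T)/\EuScript{X}}=h^*\Omega_{T/S}$, which surjects onto $\Omega_{\cal Z/\EuScript{X}}$. Pulling this back by $\nu$ and composing with the natural surjection $\nu^*\Omega_{\cal Z/\EuScript{X}}\twoheadrightarrow \Omega_{\cal Y/\EuScript{X}}$ yields surjections
\[
g^*\Omega_{T/S}\twoheadrightarrow \nu^*\Omega_{\cal Z/\EuScript{X}}\twoheadrightarrow \Omega_{\cal Y/\EuScript{X}}.
\]
Dualizing (using that $\Omega_{T/S}$ is locally free on the chosen big open subsets of \autoref{sub:set-up}) gives inclusions
\[
\cal F=\sheafhom_{\cal Y}(\Omega_{\cal Y/\EuScript{X}},\cal O_\cal Y)\hookrightarrow \sheafhom_{\cal Y}(\nu^*\Omega_{\cal Z/\EuScript{X}},\cal O_\cal Y)\hookrightarrow g^*\cal T_{T/S}.
\]
By naturality of the cotangent sequence with respect to the commutative square $\{\cal Y\to T,\,\cal Y\to \EuScript{X},\,T\to S,\,\EuScript{X}\to S\}$, this composition coincides with the embedding $\cal F\hookrightarrow g^*\cal T_{T/S}$ used in \autoref{sub:pw} to define $\cal F'$.

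It then suffices to show that $\sheafhom_{\cal Y}(\nu^*\Omega_{\cal Z/\EuScript{X}},\cal O_\cal Y)$ is saturated in $g^*\cal T_{T/S}$ and has the same generic rank as $\cal F'$: for then $\cal F'$ is sandwiched between itself and a saturated subsheaf of equal rank, so the quotient is both torsion (by rank) and torsion-free (since $\cal F'$ is saturated in $g^*\cal T_{T/S}$), hence zero. For saturation, the quotient of $\sheafhom_{\cal Y}(\nu^*\Omega_{\cal Z/\EuScript{X}},\cal O_\cal Y)\hookrightarrow g^*\cal T_{T/S}$ embeds into $\sheafhom_{\cal Y}(K,\cal O_\cal Y)$, where $K$ is the kernel of $g^*\Omega_{T/S}\twoheadrightarrow \nu^*\Omega_{\cal Z/\EuScript{X}}$, and duals of coherent sheaves on the integral variety $\cal Y$ are torsion-free. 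For the rank, since $\nu$ is birational we have $K(\cal Y)=K(\cal Z)$, so at the generic point $\eta$ of $\cal Y$,
\[
(\nu^*\Omega_{\cal Z/\EuScript{X}})_\eta=\Omega_{K(\cal Z)/K(\EuScript{X})}=\Omega_{K(\cal Y)/K(\EuScript{X})}=(\Omega_{\cal Y/\EuScript{X}})_\eta,
\]
and their duals both have rank $r=\rank\cal F=\rank\cal F'$.

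The main obstacle I foresee is the naturality claim above, which reduces to commutativity of
\[
\xymatrix{
g^*\Omega_{T/k}\ar[r]\ar@{->>}[d] & \Omega_{\cal Y/k}\ar@{->>}[d]\\
g^*\Omega_{T/S}\ar[r] & \Omega_{\cal Y/\EuScript{X}}
}
\]
and careful restriction of its dual to the foliation $\cal F=\cal T_{\cal Y/\EuScript{X}}\subseteq \cal T_{\cal Y/k}$. This is routine from functoriality of the relative cotangent sequence, but it requires tracking through the various pullbacks and dualizations to identify the embedding of $\cal F$ into $g^*\cal T_{T/S}$ from \autoref{sub:pw} with the composition that factors through $\sheafhom_{\cal Y}(\nu^*\Omega_{\cal Z/\EuScript{X}},\cal O_\cal Y)$.
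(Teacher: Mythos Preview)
Your overall strategy matches the paper's: establish inclusions $\cal F\hookrightarrow (\nu^*\Omega_{\cal Z/\EuScript{X}})^\vee\hookrightarrow g^*\cal T_{T/S}$, check that the middle sheaf is saturated, and that it has the same rank as $\cal F$. The saturation and rank arguments you give are exactly those of the paper.

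There is, however, a genuine error in your derivation of the first inclusion. The map $\nu^*\Omega_{\cal Z/\EuScript{X}}\to\Omega_{\cal Y/\EuScript{X}}$ is \emph{not} a surjection: the cotangent sequence for $\cal Y\to\cal Z\to\EuScript{X}$ reads
\[
\nu^*\Omega_{\cal Z/\EuScript{X}}\to\Omega_{\cal Y/\EuScript{X}}\to\Omega_{\cal Y/\cal Z}\to 0,
\]
and $\Omega_{\cal Y/\cal Z}$ is nonzero precisely on the locus where $\nu$ fails to be an isomorphism (indeed, this sheaf is what later controls $\frak F$). So you cannot obtain $\cal F\hookrightarrow(\nu^*\Omega_{\cal Z/\EuScript{X}})^\vee$ by dualizing a surjection. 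The paper's fix is immediate: $\Omega_{\cal Y/\cal Z}$ is a torsion $\cal O_{\cal Y}$-module (supported on a proper closed subset), so $\Omega_{\cal Y/\cal Z}^\vee=0$, and dualizing the right-exact sequence above still yields the required injection $\cal F=\Omega_{\cal Y/\EuScript{X}}^\vee\hookrightarrow(\nu^*\Omega_{\cal Z/\EuScript{X}})^\vee$. With this correction your argument goes through and is essentially identical to the paper's.

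Your explicit flagging of the compatibility between the embedding $\cal F\hookrightarrow g^*\cal T_{T/S}$ constructed in \autoref{sub:pw} and the composite built here is a point the paper leaves implicit; you are right that it follows from functoriality of the cotangent sequence.
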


\begin{proof}
	
	The first part of the proposition holds because 
	\[\cal F=\sheafder_{\cal O_\EuScript{X}}(\cal O_\cal Y,\cal O_\cal Y)=\Omega_{\cal Y/\EuScript{X}}^\vee.\]
	  It remains to prove the statement about $\cal F'$.

Consider the conormal sequence of
 $\cal Z\to\EuScript{X}\times_ST\to\EuScript{X}$:
\[
\xymatrix{
I/I^2 \ar[r] & i^*\Omega_{\EuScript{X}\times_S T/\EuScript{X}}\cong h^*\Omega_{T/S} \ar[r] & \Omega_{\cal Z/\EuScript{X}} \ar[r] & 0}.
\]

Pulling back the above sequence to $\cal Y$,
\begin{equation}\label{eqn:pullback}
\xymatrix{
\nu^*I/I^2 \ar[r] & g^*\Omega_{T/S} \ar[r] & \nu^*\Omega_{\cal Z/\EuScript{X}} \ar[r] & 0}.
\end{equation}

Finally we have the cotangent sequence of $\cal Y\to\cal Z\to\EuScript{X}$:

\begin{equation}\label{eqn:cotangentYZX}
\xymatrix{
\nu^*\Omega_{\cal Z/\EuScript{X}} \ar[r] & \Omega_{\cal Y/\EuScript{X}} \ar[r] & \Omega_{\cal Y/\cal Z} \ar[r] & 0}.
\end{equation}

We break the proof into several claims which together imply the result:
\begin{enumerate}[label=(\roman*)]
	\item
	$\cal F\hookrightarrow(\nu^*\Omega_{\cal Z/\EuScript{X}})^\vee$:
	
	Taking the dual of
	\autoref{eqn:cotangentYZX}
	gives
	\[\xymatrix{
0 \ar[r] & \Omega_{\cal Y/\cal Z}^\vee \ar[r] & \cal F \ar[r] & (\nu^*\Omega_{\cal Z/\EuScript{X}})^\vee }.
	\]
	But $\Omega_{\cal Y/\cal Z}$ is a torsion $\cal O_\cal Y$-module, as it is supported where $\nu$ is not an isomorphism by \autoref{eqn:cotangentYZX}. So $(\Omega_{\cal Y/\cal Z})^\vee=0$, giving the required injection.

\item $(\nu^*\Omega_{\cal Z/\EuScript{X}})^\vee/\cal F$ is torsion:

This is because
$\cal F=(\Omega_{\cal Y/\EuScript{X}})^\vee \to (\nu^*\Omega_{\cal Z/\EuScript{X}})^\vee$
is an isomorphism where $\nu$ is an isomorphism.

	\item
	$(\nu^*\Omega_{\cal Z/\EuScript{X}})^\vee\hookrightarrow g^*\cal T_{T/S}$:
	
	Since $g$ is flat, we have $g^*\cal T_{T/S}=(g^*\Omega_{T/S})^\vee$.
	To show the required injectivity, it is enough to show surjectivity of
	$g^*\Omega_{T/S}\to\nu^*\Omega_{\cal Z/\EuScript{X}}$, which we see from \autoref{eqn:pullback}.
	\item

$(\nu^*\Omega_{\cal Z/\EuScript{X}})^\vee$ is saturated in $g^*\cal T_{T/S}$:
	
	Define $\cal K_{\cal Y}$ to be the kernel in the following exact sequence:
	\[\xymatrix{
	0\ar[r] & \cal K_{\cal Y} \ar[r] & g^*\Omega_{T/S}\ar[r] &\nu^*\Omega_{\cal Z/X}\ar[r] & 0}
	\]
This gives
	\[\xymatrix{
	0 \ar[r] & (\nu^*\Omega_{\cal Z/\EuScript{X}})^\vee\ar[r] & g^*\cal T_{T/S} \ar[r] & \cal K_{\cal Y}^\vee.}
	\]
	The cokernel of
	$(\nu^*\Omega_{\cal Z/\EuScript{X}})^\vee\hookrightarrow g^*\cal T_{T/S}$ is a submodule of the torsion-free $\cal O_\cal Y$-module
	$\cal K_{\cal Y}^\vee$. So it is also torsion-free.

\end{enumerate}

\end{proof}

Next we use this to give an alternate description of $\frak F$.

\begin{proposition}\label{prop:F_support}

For an integral Weil divisor $D$, denote the generic point of $D$ by $\xi_D$.  Then we have:

\[ \frak F=\sum_{D} \length(({\Omega_{\cal Y/\cal Z}})_{\xi_D}) \ D
\]
 
\end{proposition}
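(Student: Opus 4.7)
The plan is to verify the equality of Weil divisors multiplicity by multiplicity at each integral Weil divisor $D$ with generic point $\xi_D$, via a local computation at the DVR $R:=\cal O_{\cal Y,\xi_D}$. Since $\cal F\hookrightarrow \cal F'$ is an inclusion of rank-$r$ reflexive sheaves on the normal variety $\cal Y$, both stalks at $\xi_D$ are free $R$-modules of rank $r$; the canonical section $\frak F\in|\det(\cal F'/\cal F)|$ therefore has multiplicity at $D$ equal to $\length_R(\cal F'/\cal F)_{\xi_D}$, by Smith normal form applied to the inclusion of rank-$r$ free modules (the valuation of the determinant of the inclusion matrix equals the length of its cokernel). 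It thus suffices to show $\length_R(\cal F'/\cal F)_{\xi_D}=\length_R\Omega_{\cal Y/\cal Z,\xi_D}$.

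By \autoref{prop:compare_F_Omega}, $\cal F_{\xi_D}=\Omega_{\cal Y/\cal X,\xi_D}^\vee$ and $\cal F'_{\xi_D}=(\nu^*\Omega_{\cal Z/\cal X,\xi_D})^\vee$. I will exploit the localised cotangent sequence
\[
\nu^*\Omega_{\cal Z/\cal X,\xi_D}\xrightarrow{\alpha}\Omega_{\cal Y/\cal X,\xi_D}\to \Omega_{\cal Y/\cal Z,\xi_D}\to 0.
\]
The crucial technical ingredient I expect to need is that $\Omega_{\cal Y/\cal X,\xi_D}$ is a \emph{free} $R$-module; this is the main obstacle. It should follow from the local structure of the height-one purely inseparable morphism $\phi$ at the codimension-one point $\xi_D$: both $R$ and $C:=\cal O_{\cal X,\phi(\xi_D)}$ are DVRs with $R^p\subset C$, and such an extension admits a presentation $R\cong C[t_1,\ldots,t_n]/(t_i^p-a_i)$ obtained from a $p$-basis of $K(\cal Y)/K(\cal X)$ adjusted (by rescaling with appropriate powers of the uniformiser of $C$) so that $t_i\in R$ and $a_i\in C$; then $\Omega_{R/C}$ is freely generated over $R$ by $dt_1,\ldots,dt_n$. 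For this I would invoke a dedicated lemma (likely \autoref{lem:Omega_of_normalisation}, referenced in the acknowledgements).

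Granting freeness, $\Omega_{\cal Y/\cal X,\xi_D}\cong R^r$, and the computation concludes as follows. Decompose $\nu^*\Omega_{\cal Z/\cal X,\xi_D}\cong R^r\oplus T$ via its torsion submodule $T$ (both sides have rank $r$); since the target of $\alpha$ is torsion-free, $\alpha$ annihilates $T$ and reduces to a map $\alpha_{00}\colon R^r\to R^r$. The cotangent sequence then identifies $R^r/\alpha_{00}(R^r)$ with $\Omega_{\cal Y/\cal Z,\xi_D}$. Dualising, $\cal F_{\xi_D}\cong(R^r)^\vee\cong R^r$ and $\cal F'_{\xi_D}\cong(R^r\oplus T)^\vee\cong R^r$, and the inclusion $\cal F_{\xi_D}\hookrightarrow \cal F'_{\xi_D}$ is canonically identified with $\alpha_{00}^\vee$. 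Over the DVR $R$, the cokernels of a map between free modules of equal rank and of its dual have equal length (Smith normal form), yielding
\[
\length_R(\cal F'/\cal F)_{\xi_D}=\length_R\bigl(R^r/\alpha_{00}^\vee(R^r)\bigr)=\length_R\bigl(R^r/\alpha_{00}(R^r)\bigr)=\length_R\Omega_{\cal Y/\cal Z,\xi_D},
\]
which is the required multiplicity.
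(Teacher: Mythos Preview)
Your argument is essentially the same as the paper's: reduce to a DVR, use \autoref{prop:compare_F_Omega} to identify the inclusion $\cal F\hookrightarrow\cal F'$ with the dual of the map $\nu^*\Omega_{\cal Z/\EuScript X}\to\Omega_{\cal Y/\EuScript X}$ from the cotangent sequence, and then invoke Smith normal form over the DVR to equate the length of the cokernel with that of the dual cokernel. The paper packages the Smith-normal-form step as a separate lemma (\autoref{lem:support_dual}), whereas you carry it out inline; otherwise the structure is identical.

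The one place your write-up needs attention is the freeness of $\Omega_{\cal Y/\EuScript X}$ at $\xi_D$. Your sketch ``take a $p$-basis of $K(\cal Y)/K(\EuScript X)$ and rescale by powers of the uniformiser so that $t_i\in R$ and $t_i^p\in C$'' is not a proof: the existence of a $p$-basis for the ring extension $R/C$ (equivalently, the presentation $R\cong C[t_1,\dots,t_n]/(t_i^p-a_i)$) is exactly the non-trivial content here, and it does not follow from a naive rescaling argument. The paper handles this by citing the theorem of Kimura--Niitsuma together with 38.A of Matsumura, which guarantee that $\Omega_{\cal Y/\EuScript X}$ is locally free on the locus where both $\cal Y$ and $\EuScript X$ are regular. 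Also, your pointer to \autoref{lem:Omega_of_normalisation} is misplaced: that lemma characterises when $\Omega_{B/A}$ \emph{vanishes} for a normalisation and says nothing about freeness of $\Omega_{\cal Y/\EuScript X}$. Replace that citation with the Kimura--Niitsuma reference and your proof matches the paper's.
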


\begin{proof}

Recall that the divisor $\frak F$ is defined as the element of $\theta\in\Hom(\det\cal F,\det\cal F')$ which arises from the inclusion $\cal F\hookrightarrow \cal F'$.  We can state this as \[\frak F=\sum_D \mathrm{length}((\det\cal F'/\theta(\det\cal F))_{\xi_d})\ D,\]
where we are using that $D$ is a codimension one point, to ensure that both $\det F'$ and $\det F$ are Cartier divisors there.

On the other hand, in \autoref{prop:compare_F_Omega} we saw that the inclusion $\cal F\hookrightarrow\cal F'$ is dual to the map $\psi$ in the exact sequence
\[\xymatrix{
	\nu^*\Omega_{\cal Z/\EuScript{X}}\ar[r]^-\psi & \Omega_{\cal Y/\EuScript{X}}\ar[r] & \Omega_{\cal Y/\cal Z}\ar[r] & 0}.
\]

On the locus where $\cal Y$ and $\EuScript{X}$ are both regular, the sheaf
$\Omega_{\cal Y/\EuScript{X}}$ is locally free (Theorem of \cite{KimuraNiitsuma} and 38.A of \cite{matsumura}) and hence torsion free.   So the result follows by applying  \autoref{lem:support_dual} to the map $\psi$.

\end{proof}

\begin{lemma}\label{lem:support_dual}
Let $\theta\colon M_1\to M_2$ be a morphism of finitely-generated modules over a Noetherian normal integral domain $A$ such that
	\[\xymatrix{
	(\theta\otimes 1)\colon M_1\otimes_A K(A)\ar[r] & M_2\otimes_A K(A)}
	\]
is an isomorphism, and assume that $M_2$ is torsion free in codimension one.
Then 
\[ \length (M_2/\theta(M_1))_\frak p=
\length\left(\bigwedge^r M_1^\vee / (\bigwedge^r \theta^\vee)(\bigwedge^r M_2^\vee)\right)_\frak p
\]
for every height one prime ideal $\frak p$ of $A$, where $r$ is the rank of the modules $M_i$.
\end{lemma}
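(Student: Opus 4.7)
The plan is to reduce to the case of a discrete valuation ring by localizing at $\frak{p}$. Since $A$ is a Noetherian normal domain and $\frak{p}$ has height one, $A_\frak{p}$ is a DVR; pick a uniformizer $\pi$. Because $M_1$ is finitely generated over the Noetherian ring $A$, localization commutes both with $\Hom(-,A)$ and with $\bigwedge^r(-)$, so we can replace every sheaf in the statement by its stalk at $\frak{p}$ and work entirely over $A_\frak{p}$.

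Once localized, the hypothesis that $M_2$ is torsion free in codimension one forces $(M_2)_\frak{p}$ to be a free $A_\frak{p}$-module of rank $r$. Decompose $(M_1)_\frak{p} \cong A_\frak{p}^{\,r} \oplus T$, where $T$ is the torsion submodule. Since $(M_2)_\frak{p}$ is torsion free, $\theta_\frak{p}(T)=0$, so $\theta_\frak{p}$ is determined by its restriction $\theta_\frak{p}^{\mathrm{free}}\colon A_\frak{p}^{\,r} \to A_\frak{p}^{\,r}$; because $\theta \otimes K(A)$ is an isomorphism, $\theta_\frak{p}^{\mathrm{free}}$ is injective with $K(A)$-isomorphism. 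Dualizing also kills $T$: $(M_1)_\frak{p}^\vee = \Hom_{A_\frak{p}}(A_\frak{p}^{\,r} \oplus T, A_\frak{p}) \cong A_\frak{p}^{\,r}$, and $\theta_\frak{p}^\vee$ coincides with $(\theta_\frak{p}^{\mathrm{free}})^\vee$.

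Now apply the Smith normal form / invariant factors decomposition over the DVR $A_\frak{p}$: choose bases of the two copies of $A_\frak{p}^{\,r}$ in which $\theta_\frak{p}^{\mathrm{free}}$ is represented by a diagonal matrix $\mathrm{diag}(\pi^{a_1},\dots,\pi^{a_r})$ with $a_i \geq 0$. Then
\[
\length \bigl( (M_2)_\frak{p}/\theta_\frak{p}(M_1)_\frak{p} \bigr) = \length \Bigl( \bigoplus_{i=1}^r A_\frak{p}/\pi^{a_i}A_\frak{p} \Bigr) = \sum_{i=1}^r a_i.
\]
On the other side, $\bigwedge^r (M_1^\vee)_\frak{p}$ and $\bigwedge^r (M_2^\vee)_\frak{p}$ are both free of rank one over $A_\frak{p}$, and under the chosen bases $\bigwedge^r \theta_\frak{p}^\vee$ acts as multiplication by $\det(\theta_\frak{p}^\vee) = \pi^{a_1+\cdots+a_r}$ (the transpose of a diagonal matrix has the same determinant). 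Hence the cokernel of $\bigwedge^r \theta_\frak{p}^\vee$ has length $\sum a_i$ as well, proving the equality.

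The only conceptual subtlety is to verify that the torsion part $T$ of $M_1$ contributes trivially on both sides — it is annihilated by $\theta_\frak{p}$ because $(M_2)_\frak{p}$ is torsion free, and it is annihilated under $\Hom(-,A_\frak{p})$ because $A_\frak{p}$ is torsion free — so nothing is lost by diagonalizing only the free part. The rest is the standard invariant factor computation over a DVR.
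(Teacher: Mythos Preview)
Your proof is correct and follows essentially the same approach as the paper: localize at $\frak p$ to reduce to a DVR, split off the torsion part of $(M_1)_\frak p$, put $\theta$ in Smith normal form, and compare the sum of the exponents on both sides. The only cosmetic difference is that you write the diagonal entries as $\pi^{a_i}$ while the paper writes them as $a_i$ and then takes valuations, and you are slightly more explicit about why localization commutes with $\Hom$ and $\bigwedge^r$ and why $T$ contributes nothing.
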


\begin{proof}
Let $\frak p$ be a height 1 prime ideal of $A$. Then $A_\frak p$ is a DVR with valuation $v_{\frak p}$, and by the structure theorem for finitely-generated modules over a PID we may write
${M_1}_\frak p\cong \widetilde{M_1}\oplus T_1$ and ${M_2}_\frak p\cong A_\frak p^{\oplus f_2}$, where $\widetilde{M_1}\cong A_\frak p^{\oplus f_1}$ and $T_1\subset{M_1}_\frak p$ is the torsion submodule. Since by hypothesis $M_{1}\otimes_{A}K(A)\cong M_{2}\otimes_{A}K(A)$, we have $f_1=f_2$, so let $r=f_i$.

We may choose bases of $\widetilde{M_1}$ and ${M_2}_\frak p$ so that
$\theta_\frak p|_{\widetilde{M_1}}$ is represented by an $r\times r$ matrix in Smith normal form, which is  diagonal with non-zero entries $a_i$.  The quotient $M_{2\frak p}/\theta(M_{1\frak p})$ has length $\Sigma_{i=1}^r v_{\frak p}(a_i)$.

On the other hand, the dual map $\theta^\vee\colon({M_2}_\frak p)^\vee \to (\widetilde{M_1})^\vee=({M_1}_\frak p)^\vee$ is represented by the same matrix with respect to the dual bases, and
the map $\bigwedge^r(\theta^\vee)$ is multiplication by the determinant of this matrix, so is multiplication by $\prod_{i=1}^r a_i$.
It follows that the length of $\bigwedge^r M_{1\frak p}^\vee / ((\bigwedge^r \theta^\vee)(\bigwedge^r M_{2\frak p}^\vee))$ is $v_{\frak p}(\Pi_{i=1}^r a_i)=\Sigma_{i=1}^r v_{\frak p}(a_i)$ as required.

\end{proof}

Finally we reach the proof of \autoref{main_theorem} \autoref{F_normality} and \autoref{F_support}.

\begin{proposition}[\autoref{main_theorem} \autoref{F_normality}, \autoref{F_support}]\label{cor:almost_conductor_normalisation}
	The support of $\frak F$ is equal to the codimension one part of the locus on which $\nu$ is not an isomorphism.  
	
	In particular,
	$\frak F=0$ if and only if  $(\EuScript{X}\times_ST)_{\red}$ is $R_1$.
\end{proposition}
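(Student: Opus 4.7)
The plan is to read off both statements from \autoref{prop:F_support}. That result expresses $\frak F = \sum_D \length((\Omega_{\cal Y/\cal Z})_{\xi_D})\, D$, so a prime divisor $D \subset \cal Y$ lies in the support of $\frak F$ if and only if the stalk $(\Omega_{\cal Y/\cal Z})_{\xi_D}$ is nonzero. Therefore the entire proposition reduces to the claim that, at a codimension one point $\xi_D$ of $\cal Y$, this stalk vanishes exactly when $\nu$ is an isomorphism on a neighbourhood of $\xi_D$.

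One direction is immediate, since a local isomorphism has trivial relative differentials. For the converse I would work locally: set $A = \cal O_{\cal Z,\nu(\xi_D)}$ and $B = \cal O_{\cal Y, \xi_D}$, so $A \subseteq B$ is a finite extension with the same fraction field. Since $\cal Y$ is normal and $\xi_D$ has codimension one, $B$ is a DVR; and since $\cal Z = (\EuScript{X}\times_S T)_{\red}$ is reduced and irreducible (irreducibility follows from $\tau \colon T\to S$ being purely inseparable, so the underlying topological space of $\EuScript{X}\times_S T$ coincides with that of $\EuScript{X}$), $A$ is a one-dimensional Noetherian local domain whose normalisation is $B$. The hypothesis $\Omega_{B/A} = 0$ makes $A \subseteq B$ finite, birational, and unramified; a standard local argument—essentially the content of the referenced lemma attributed to de Jong in the acknowledgments—then forces $A = B$, so that $\nu$ is an isomorphism on a neighbourhood of $\xi_D$.

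Granted this equivalence, the first assertion of the proposition is immediate. For the ``in particular'' statement: $\frak F = 0$ iff $\nu$ is an isomorphism at every codimension one point of $\cal Y$, which in turn holds iff $\cal Z$ is normal in codimension one; and because $\cal Z$ is reduced (hence automatically $S_1$ at codimension one points) this is equivalent to $\cal Z$ being $R_1$. The sole obstacle in the plan is the local algebra step in the middle paragraph, where one has to rule out the possibility that a genuinely non-normal one-dimensional local ring could admit an unramified normalisation; everything else is bookkeeping around \autoref{prop:F_support}.
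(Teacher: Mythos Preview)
Your proposal is correct and follows essentially the same route as the paper: reduce to \autoref{prop:F_support}, then invoke the local lemma (\autoref{lem:Omega_of_normalisation}) that $\Omega_{B/A}=0$ forces the normalisation $A\hookrightarrow B$ to be an isomorphism.

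One point worth tightening: the lemma you are appealing to requires the normalisation to be \emph{purely inseparable}, not merely finite and birational. You should record why this holds here: both $\cal Y\to\EuScript{X}$ and $\cal Z\to\EuScript{X}$ are universal homeomorphisms (the latter because $T\to S$ is), hence so is $\nu$, and in particular the residue field extension $\kappa_A\to\kappa_B$ at any point is purely inseparable. Without this hypothesis the implication ``unramified normalisation $\Rightarrow$ isomorphism'' is false in general---for instance $A=\mathbb{R}[[x,y]]/(x^2+y^2)\subsetneq \mathbb{C}[[y]]=B$ has $\Omega_{B/A}=0$ in characteristic zero---so your phrase ``finite, birational, and unramified; a standard local argument'' undersells what is actually being used. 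The aside about $S_1$ in your last paragraph is harmless but unnecessary: for an integral scheme, ``normal in codimension one'' and $R_1$ coincide directly, since a one-dimensional Noetherian local domain is normal iff regular.
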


\begin{proof}
	We have seen in \autoref{prop:F_support} that the support of $\frak F$ is equal to the codimension one part of the support of $\Omega_{\cal Y/\cal Z}$.   Now the result follow from the more  general \autoref{lem:Omega_of_normalisation}. 
\end{proof}

\begin{lemma}\label{lem:Omega_of_normalisation}
	Let $(A,\frak m_A,\kappa_A)$ be a reduced local ring of characteristic $p>0$. Suppose that the normalisation $f\colon A\to B$  is purely inseparable and finite (the latter holds for instance if $A$ is excellent \cite[\href{https://stacks.math.columbia.edu/tag/03GH}{Tag 03GH}]{stacks}). 
	
	Then $\Omega_{B/A}= 0$ if and only if $A$ is normal.
\end{lemma}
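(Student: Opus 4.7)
The backward direction is immediate: if $A$ is normal then $A = B$ by the definition of normalisation, so $\Omega_{B/A} = \Omega_{A/A} = 0$. For the forward direction I would assume $\Omega_{B/A}=0$ and reduce to Nakayama in two steps. Since $f$ is purely inseparable, $\Spec B \to \Spec A$ is radicial, so $B$ is local with a unique maximal ideal $\mathfrak{n}$; its residue field $\kappa_B = B/\mathfrak{n}$ is a finite purely inseparable extension of $\kappa_A$. The plan is to prove the stronger statement that $R := B/\mathfrak{m}_A B$ equals $\kappa_A$. Once this is in hand, $B = A + \mathfrak{m}_A B$, hence $(B/A) = \mathfrak{m}_A \cdot (B/A)$ as $A$-modules, and Nakayama applied to the finitely generated $A$-module $B/A$ forces $A = B$.

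To show $R = \kappa_A$, I would first establish $\kappa_A = \kappa_B$ by using the canonical surjection $\Omega_{B/A}\otimes_B \kappa_B \twoheadrightarrow \Omega_{\kappa_B/\kappa_A}$ arising from the composition $B \to \kappa_B \xrightarrow{d} \Omega_{\kappa_B/\kappa_A}$. The vanishing of $\Omega_{B/A}$ gives $\Omega_{\kappa_B/\kappa_A} = 0$, and since a finite purely inseparable field extension whose module of differentials vanishes must be trivial (pick a $p$-basis), we obtain $\kappa_A = \kappa_B$. Next, $R$ is a local Artinian $\kappa_A$-algebra with residue field $\kappa_B = \kappa_A$, and by base change $\Omega_{R/\kappa_A} = \Omega_{B/A}\otimes_B R = 0$. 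For any Noetherian local $k$-algebra whose residue field coincides with $k$ one has the standard isomorphism $\Omega_{R/k}\otimes_R k \cong \mathfrak{m}_R/\mathfrak{m}_R^2$ (sending $dx \otimes 1 \mapsto \bar x$), so $\mathfrak{m}_R/\mathfrak{m}_R^2 = 0$, and a second application of Nakayama inside $R$ yields $\mathfrak{m}_R = 0$, i.e., $R = \kappa_A$.

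The proof is essentially a double Nakayama argument glued together by a differential calculation. The delicate points will be: first, verifying that $B$ is local (which uses purely inseparability in an essential way, since finiteness alone is not enough); and second, the identification $\Omega_{R/k}\otimes_R k \cong \mathfrak{m}_R/\mathfrak{m}_R^2$, which requires $R$ to have residue field equal to its ground field $k$ — this is precisely what the first reduction arranges. Notably no flatness of $A \to B$ is required at any stage, so only the hypotheses of the lemma are used.
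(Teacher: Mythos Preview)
Your proof is correct and follows essentially the same two-step reduction as the paper: first show $\kappa_A = \kappa_B$ via the surjection $\Omega_{B/A} \twoheadrightarrow \Omega_{\kappa_B/\kappa_A}$, then show $R = B/\mathfrak{m}_A B$ equals $\kappa_A$ and conclude by Nakayama. The only minor difference is in the second step: the paper deduces $R = \kappa_A$ by observing that $\Omega_{R/\kappa_A} = 0$ makes $R$ an \'etale $\kappa_A$-algebra (hence a field), whereas you use the conormal isomorphism $\Omega_{R/\kappa_A} \otimes_R \kappa_A \cong \mathfrak{m}_R/\mathfrak{m}_R^2$ directly to kill $\mathfrak{m}_R$.
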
 

\begin{proof}
	If $\Omega_{B/A}\neq 0$ then $f$ is not an isomorphism and hence $A$ is not normal.  Conversely, suppose that $f$ is not an isomorphism and we will show that $\Omega_{B/A}\neq 0$.
	By hypothesis $\kappa_B/\kappa_A$ is a finite
 purely inseparable extension, where
	$\kappa_B$ denotes the residue field of the local ring $B$.
	If $[\kappa_B:\kappa_A]>1$ then we are done by the surjection
	\[\xymatrix{
		\Omega_{B/A} \ar[r] & \Omega_{\kappa_B/\kappa_A} \ar[r] & 0}.
	\]
	So assume $\kappa_A\to\kappa_B$ is an isomorphism, and consider $\Omega_{(B/\frak m_A B)/\kappa_A}$. If this module is 0 then $B/\frak m_AB$ is an \'etale $\kappa_A$-algebra and hence is isomorphic to a finite product of finite separable extensions of $\kappa_A$. Since $\kappa_A\cong\kappa_B$ it must be finite product of copies of $\kappa_A$, and since $B/\frak m_AB$ is local we conclude that $B/\frak m_AB\cong\kappa_A$. But this is a contradiction by Nakayama, as we would have $B\cong A$. Therefore $\Omega_{(B/\frak m_A B)/\kappa_A}\neq 0$ and so we get the result from the surjection
	\[\xymatrix{
		\Omega_{B/A} \ar[r] & \Omega_{(B/\frak m_A B)/\kappa_A} \ar[r] & 0}.
	\]
\end{proof}

\section{The movable part of the linear system}\label{sec:section_M}

In this section we define and deal with the movable part $\frak M$.

\subsection{Assumption}\label{sub:M_assumption}
In addition to the assumptions in \autoref{sub:set-up}, in order to define $\frak M$ we must now add:
\begin{itemise}
\item The locally free sheaf $\cal T_{T/S}$ must be free, so we will replace $S$ and $T$ by dense open subsets to assume this.
\end{itemise}

\subsection{Construction}\label{sub:construct_M}

Let $U$ be the big open subset of $\cal Y$ on which $\cal Q$ and $\cal F'$ are locally free.

We have an exact sequence 
\[\xymatrix{
0\ar[r] & \cal F' \ar[r] & g^*\cal T_{T/S} \ar[r] & \cal Q\ar[r] & 0}.
\]

Dualising this gives 
\[\xymatrix{
0 \ar[r] & \cal Q^{\vee} \ar[r] & g^*\Omega_{T/S} \ar[r] & (\nu^*\Omega_{\cal Z/\EuScript{X}})^{\vee\vee}}.
\]

Furthermore, this  is right exact after restriction to  $U$  since $\cal Ext^1(\cal Q^{\vee}|_U,\cal O_{U})=0$.
Recall that $r=\rank\cal F=\rank\nu^*\Omega_{\cal Z/\EuScript{X}}$.

As $\cal F'|_U$ is locally free, $\cal F'^\vee|_U=(\nu^*\Omega_{\cal Z/\EuScript{X}})^{\vee\vee}|_U$ is also locally free, and we have $\det(\nu^*\Omega_{\cal Z/\EuScript{X}})^{\vee\vee}|_U\cong(\bigwedge^r (\nu ^*\Omega_{\cal Z/\EuScript{X}})^{\vee\vee})|_U $.  Note also that to determine a linear system of Weil divisors, it is enough to define it after restriction to the big open subset $U$.

\begin{definition}
The movable linear system $\frak M_{T/S}\subset | \det((\nu^*\Omega_{\cal Z/\EuScript{X}})^{\vee\vee}) |$ of Weil divisors on $\cal Y$ is defined by the map
\[\xymatrix{
\bigwedge^r g^*\Omega_{T/S}|_U \ar[r] & \bigwedge^r (\nu ^*\Omega_{\cal Z/\EuScript{X}})^{\vee\vee}|_U \ar[r] &  0}.
\]
\end{definition}

As $U$ is a big open set, each divisor in the linear equivalence class extends uniquely to a divisor on $\cal Y$, and the linear equivalence class is equal to
\[\det((\nu ^*\Omega_{\cal Z/\EuScript{X}})^{\vee\vee})=-\det(\cal Q^\vee)=\det\cal Q.
\]

We now ensure that this is consistent with our earlier description of the varieties $\cal W$ and $\cal V$ (\autoref{defn:w_and_v}).
 
 \begin{lemma}\label{lem:plucker}
 		Let $U\subseteq\cal Y$ be the big open subset on which $\cal Q$ and $\cal F'$ are locally free.
 Then 
 	the rational maps induced by $g^*\cal T_{T/S}\to \cal Q\to 0$ and $\bigwedge^r g^*\Omega_{T/S}|_U\to (\bigwedge^r \Omega_{\cal Y/\EuScript{X}})^{\vee\vee}|_U\to 0$ have isomorphic images.
 \end{lemma}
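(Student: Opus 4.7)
The plan is to use the Plücker embedding to embed the Grassmannian into a projective bundle, and then to compare the two resulting rational maps on a dense open subset of $\cal Y$. Since the Plücker embedding $\mathrm{Gr}(r,\cal T_{T/S})\hookrightarrow \mathbb{P}_T(\bigwedge^r \cal T_{T/S})$ is a closed immersion, the image of the first rational map (defined by $g^*\cal T_{T/S}\twoheadrightarrow \cal Q$) is isomorphic to the image of its composition with Plücker. This composition is the rational map $\cal Y|_U\dashrightarrow \mathbb{P}_T(\bigwedge^r \cal T_{T/S})$ classifying the line subbundle $\bigwedge^r \cal F'\hookrightarrow \bigwedge^r g^*\cal T_{T/S}$ obtained by taking the top exterior power of the rank-$r$ subbundle $\cal F'\subset g^*\cal T_{T/S}$.

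For the second rational map, I would first shrink $U$ slightly---keeping it big, using \autoref{lem:big_locally_free} applied to $\Omega_{\cal Y/\cal X}$ and reflexivity of $\cal F=\Omega_{\cal Y/\cal X}^\vee$---so that $\Omega_{\cal Y/\cal X}$ and $\cal F$ are also locally free on $U$. On this shrunken $U$, the canonical map $\Omega_{\cal Y/\cal X}\to \Omega_{\cal Y/\cal X}^{\vee\vee}=\cal F^\vee$ is an isomorphism. Taking top exterior powers and dualising then yields the identifications $(\bigwedge^r \Omega_{\cal Y/\cal X})^{\vee\vee}|_U = \bigwedge^r \cal F^\vee|_U$ and $((\bigwedge^r \Omega_{\cal Y/\cal X})^{\vee\vee})^\vee|_U = \bigwedge^r \cal F|_U$. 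Dualising the surjection defining the second map therefore produces a line subbundle $\bigwedge^r \cal F|_U\hookrightarrow \bigwedge^r g^*\cal T_{T/S}|_U$, exhibiting the second map as a rational map into the same projective bundle $\mathbb{P}_T(\bigwedge^r \cal T_{T/S})$.

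It then remains to check that the two line subbundles $\bigwedge^r \cal F$ and $\bigwedge^r \cal F'$ of $\bigwedge^r g^*\cal T_{T/S}$ agree on a dense open. By \autoref{cor:almost_conductor_normalisation}, the codimension one support of the cokernel $\cal F'/\cal F$ is precisely $|\frak F|$, so on $U\setminus |\frak F|$ the inclusion $\cal F\hookrightarrow \cal F'$ is an isomorphism of locally free rank-$r$ sheaves. Consequently $\bigwedge^r \cal F$ and $\bigwedge^r \cal F'$ coincide as line subbundles of $\bigwedge^r g^*\cal T_{T/S}$ on this dense open, so the two rational maps to $\mathbb{P}_T(\bigwedge^r \cal T_{T/S})$ agree there and in particular have the same image.

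The main obstacle is bookkeeping: one must carefully track the various double-duals and exterior powers (in particular verifying the identification $(\bigwedge^r \Omega_{\cal Y/\cal X})^{\vee\vee}=\bigwedge^r \cal F^\vee$ on $U$, and knowing that the Plücker line subbundle of the Grassmannian pulls back correctly). These become straightforward once everything in sight has been forced to be locally free on a big open subset, but do demand a small amount of care with the definitions.
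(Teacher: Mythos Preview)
Your argument is correct, and shares the overall Pl\"ucker-embedding strategy with the paper, but the comparison of the two maps differs in a noteworthy way.

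The paper does not pass through $\cal F$ at all. Instead it uses the identification $\cal F'=(\nu^*\Omega_{\cal Z/\EuScript{X}})^\vee$ from \autoref{prop:compare_F_Omega}: on $U$ the dual of the sequence $0\to\cal F'\to g^*\cal T_{T/S}\to\cal Q\to 0$ is exact, so the surjection $g^*\Omega_{T/S}\twoheadrightarrow(\nu^*\Omega_{\cal Z/\EuScript{X}})^{\vee\vee}$ and its top exterior power are literally dual to $\bigwedge^r\cal F'\hookrightarrow\bigwedge^r g^*\cal T_{T/S}$. Combined with the dual-Grassmannian isomorphism, this shows the two rational maps are \emph{equal} as morphisms on $U$, not merely agreeing on a smaller open. (In the paper's proof the target line bundle of the second map is taken to be $\bigwedge^r(\nu^*\Omega_{\cal Z/\EuScript{X}})^{\vee\vee}$, matching the definition of $\frak M$; the appearance of $\Omega_{\cal Y/\EuScript{X}}$ in the lemma statement is a slight mismatch.)

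Your route, by contrast, works with $\cal F=\Omega_{\cal Y/\EuScript{X}}^\vee$ and obtains the a priori different line subbundle $\bigwedge^r\cal F\subset\bigwedge^r g^*\cal T_{T/S}$, then invokes \autoref{cor:almost_conductor_normalisation} to show $\cal F=\cal F'$ away from $|\frak F|$, so the two rational maps agree on the dense open $U\setminus|\frak F|$. This is perfectly valid for comparing images, and has the merit of treating the lemma exactly as stated; the cost is the extra appeal to the support description of $\frak F$, which the paper's more direct duality argument avoids.
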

 
 \begin{proof}
 We may replace $\cal Y$ with $U$ to assume that $\cal Q$ is locally free.  Then the dual of the sequence 
 \[\xymatrix{
0\ar[r] & \cal F' \ar[r] & g^* \cal T_{T/S} \ar[r] & \cal Q \ar[r] & 0}
 \]
 is exact:
  \[\xymatrix{
0\ar[r] & \cal Q^{\vee} \ar[r] & g^*\Omega_{T/S} \ar[r] & (\nu^*\Omega_{\cal Z/\EuScript{X}})^{\vee\vee} \ar[r] & 0}.
 \]
 	
 	The map $ \bigwedge^{r} \cal F'\to \bigwedge^{r} g^*\cal T_{T/S}$ determines composition of $\cal Y\dashrightarrow \cal W_{\Im}$ with the Pl\"ucker embedding of $\mathrm{Gr}_{T}(r, \cal T_{T/S})$.  If we then compose this with the dual Grassmannian isomorphism, then using the identifications
 	\begin{gather*}\left(\bigwedge^{r}g^*\Omega_{T/S}\to \bigwedge^{r}\nu^*\Omega_{\cal Z/\EuScript{X}}\right)^\vee = \left(\left(\bigwedge^{r}\cal \nu^*\Omega_{\cal Z/\EuScript{X}}\right)^\vee\to \left(\bigwedge^{r}g^*\cal T_{T/S}\right)\right) \\
	=\left(\bigwedge^{r}\cal F'\to \bigwedge^{r}g^*\cal T_{T/S}\right)
	\end{gather*}
	the resulting map $\cal Y\dashrightarrow\mathrm{Gr}_T\left( {d \choose r}-1, \bigwedge^r \Omega_{T/S} \right)$ is exactly the one given by $\frak M_{T/S}$.
 \end{proof}

Note that the map $g^*\Omega_{T/S}\to (\nu^*\Omega_{\cal Z/\EuScript{X}})^{\vee\vee}$ matches the map $h^*\Omega_{T/S}\to \Omega_{\cal Z/\EuScript{X}}$ on the non-empty open subset of $\cal Y$ on which $\cal Y\to\cal Z$ is an isomorphism.  Therefore, as we are interested in a movable linear system, it is enough to study it on this open subset, for there is a unique extension to all of $\cal Y$.  Therefore we now shift attention to $\cal Z$.

\subsection{The essential part of a base change}

In this subsection we prove \autoref{main_theorem} \autoref{M_reducedness} via \autoref{thm:trick}.  
We begin with some preparatory lemmas.

First note that
in one direction of \autoref{main_theorem} \autoref{M_reducedness} it is easy to obtain a stronger statement:

\begin{lemma}\label{lem:Q_zero}
	
	If $\EuScript{X}\times_S T$ is regular in codimension zero, then $\cal Q_{T/S}=0$.

\end{lemma}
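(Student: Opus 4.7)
The plan is to combine the $R_0$ hypothesis with the behaviour of $S_1$ under flat base change to upgrade the assumption to genuine reducedness of $\EuScript{X}\times_S T$, and then to compare degrees of purely inseparable morphisms via the foliation correspondence of \autoref{lem:foliation_height_one}.

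Concretely, I would proceed as follows. Recall from \autoref{sub:set-up} that after possibly shrinking $S$ by a codimension $\geq 2$ set we may assume $\tau\colon T\to S$ is flat (so that $\cal T_{T/S}$ is locally free of the expected rank). Since $S_1$ is preserved under flat base change (cited in \autoref{sub:set-up}), the hypothesis that $\EuScript{X}\times_S T$ is $R_0$ — combined with the $S_1$ it inherits from $\EuScript{X}$ via the flat base change — gives that $\EuScript{X}\times_S T$ is already reduced. Therefore $\cal Z=\EuScript{X}\times_S T$ and $\nu\colon\cal Y\to\cal Z$ is birational (a finite birational morphism of the same function field).

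With $\nu$ birational, the finite purely inseparable map $\phi\colon\cal Y\to\EuScript{X}$ has degree equal to that of $\EuScript{X}\times_S T\to\EuScript{X}$, which in turn equals $\deg(\tau)$. Both $\tau$ and $\phi$ are height one $k$-morphisms corresponding to foliations $\cal T_{T/S}\subseteq\cal T_{T/k}$ and $\cal F\subseteq\cal T_{\cal Y}$; by \autoref{lem:foliation_height_one} and the paragraph preceding it their degrees are $p^{\rank\cal T_{T/S}}$ and $p^{\rank\cal F}$. Equating degrees gives
\[
\rank\cal F \;=\; \rank\cal T_{T/S} \;=\; \rank\bigl(g^*\cal T_{T/S}\bigr).
\]

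Finally, from the defining sequence
\[
0\longrightarrow \cal F'_{T/S}\longrightarrow g^*\cal T_{T/S}\longrightarrow \cal Q_{T/S}\longrightarrow 0,
\]
the inclusion $\cal F\subseteq\cal F'$ and the rank equality above force $\rank\cal F'_{T/S}=\rank g^*\cal T_{T/S}$, so $\cal Q_{T/S}$ has rank zero. But $\cal F'_{T/S}$ is saturated in $g^*\cal T_{T/S}$ by construction, so $\cal Q_{T/S}$ is torsion free, and a torsion-free sheaf of rank zero vanishes. Hence $\cal Q_{T/S}=0$.

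This argument has no real obstacle: the essential content is just the passage from $R_0$ to reduced, after which the foliation-degree comparison does all the work. The one point worth being careful about is keeping track of where flatness of $\tau$ (needed both for $S_1$-preservation and for the identification $\rank g^*\cal T_{T/S}=\rank\cal T_{T/S}$) is being used; both are guaranteed by the standing conventions of \autoref{sub:set-up}.
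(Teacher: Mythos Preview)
Your argument is correct, and it takes a genuinely different route from the paper. The paper's proof is purely sheaf-theoretic: it dualises the conormal sequence
\[
\nu^*I/I^2 \longrightarrow g^*\Omega_{T/S} \longrightarrow \nu^*\Omega_{\cal Z/\EuScript{X}} \longrightarrow 0
\]
to exhibit an inclusion $\cal Q_{T/S}\hookrightarrow(\nu^*I/I^2)^\vee$, and then observes that $R_0$ forces $I/I^2$ to be torsion, so its dual vanishes. Your argument is instead a rank count: $R_0$ forces $\deg\phi=\deg\tau$, hence $\rank\cal F=\rank\cal T_{T/S}$ via \autoref{lem:foliation_height_one}, so $\cal Q_{T/S}$ is torsion free of rank zero. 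The paper's route is shorter and stays entirely within the conormal exact sequence machinery already in play in \autoref{prop:compare_F_Omega}; yours is more geometric and foreshadows \autoref{lem:normalised_base_change_degree}.

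Two small remarks. First, the detour through reducedness is unnecessary: the $R_0$ hypothesis alone says $K(\EuScript{X})\otimes_{K(S)}K(T)$ is a regular Artinian local ring, hence a field, which immediately gives $\deg\phi=\deg\tau$ without invoking $S_1$. Second, flatness of $\tau\colon T\to S$ is not literally among the standing assumptions of \autoref{sub:set-up} (only local freeness of $\cal T_{T/S}$ and flatness of $g$ are), so your appeal to it is a slight overreach. This is harmless here---either because you can shrink further by \autoref{lem:flatification} (and torsion-freeness of $\cal Q_{T/S}$ then propagates vanishing from a dense open to all of $\cal Y$), or because, as just noted, you do not actually need it.
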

\begin{proof}
	The exact sequence
	\[\xymatrix{
		\nu^*I/I^2\ar[r] & g^*\Omega_{T/S}\ar[r] & \nu^*\Omega_{\cal Z/\EuScript{X}}\ar[r] & 0}
	\]
	dualises to
	\[\xymatrix{
		0\ar[r] & \cal F'\ar[r] & g^*\cal T_{T/S}\ar[r] & (\nu^*I/I^2)^\vee}.
	\]
	So recalling that we have assumed $g\colon \cal Y\to T$ to be flat, we get the natural inclusion
	\[\cal Q_{T/S}\subseteq (\nu^*I/I^2)^\vee.
	\]

	The statement follows because if $\EuScript{X}\times_ST$ is $R_0$ then $I/I^2$ is torsion and so the right-hand side is zero.
\end{proof}

Next is a useful criterion for when a base change is non-reduced.

\begin{lemma}\label{lem:normalised_base_change_degree}
	In the situation of \autoref{sub:set-up}, $\EuScript{X}\times_ST$ is non-reduced if and only if ${\deg(\cal Y\to\EuScript{X})}<{\deg(T\to S)}$.
\end{lemma}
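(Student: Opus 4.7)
The plan is to reduce to a length computation in an Artinian local ring at the generic point of $\EuScript{X}$. The key observation (already noted at the end of \autoref{sub:set-up}) is that after shrinking $S$ so that $\tau\colon T\to S$ is flat, the base change $\EuScript{X}\times_S T\to \EuScript{X}$ is also flat, so reducedness of $\EuScript{X}\times_S T$ is equivalent to reducedness at its generic point. This is exactly the setting where the desired degree inequality can be extracted from a simple dimension count.

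First I would identify the relevant Artinian local ring. Let $\eta\in \EuScript{X}$ be the generic point, so that the stalk $\cal O_{\EuScript{X}\times_S T,\,\eta}$ is the Artinian local ring
\[ A := K(\EuScript{X})\otimes_K L,\]
Artinian because $L/K$ is purely inseparable. Its (unique) minimal prime is nilpotent, so passing to $\cal Z = (\EuScript{X}\times_S T)_{\red}$ replaces $A$ by its residue field $A/\mathfrak{m}_A = K(\cal Z)$. Since $\nu\colon \cal Y\to \cal Z$ is a normalisation, it is birational, so $K(\cal Y)=K(\cal Z)$.

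Next I would do the length/degree count. Since the base change $\Spec L \to \Spec K$ is flat of degree $[L:K]=\deg(T/S)$, the module $A$ is free of rank $[L:K]$ over $K(\EuScript{X})$, yielding
\[ \deg(T/S)\;=\;\dim_{K(\EuScript{X})} A \;=\; \length(A)\cdot [A/\mathfrak{m}_A : K(\EuScript{X})] \;=\; \length(A)\cdot \deg(\cal Y\to \EuScript{X}).\]
In particular $\deg(\cal Y\to \EuScript{X})\leq \deg(T/S)$ always, and equality holds if and only if $\length(A)=1$, i.e.\ if and only if $A$ is a field, i.e.\ if and only if $\EuScript{X}\times_S T$ is reduced at the generic point of $\EuScript{X}$.

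Finally, I would invoke the $S_1$ observation from \autoref{sub:set-up}: after shrinking $S$ to a big open subset so that $\tau$ is flat (which is permissible since degrees of finite morphisms are unaffected by removing codimension $\geq 2$ subsets), reducedness of $\EuScript{X}\times_S T$ is equivalent to the $R_0$ condition, i.e.\ reducedness of the stalk $A$. Combining these, $\EuScript{X}\times_S T$ is non-reduced if and only if $\length(A)>1$, if and only if $\deg(\cal Y\to \EuScript{X})<\deg(T/S)$. There is no serious obstacle here; the only thing to be careful about is that the preliminary shrinking of $S$ does not affect either degree, which is immediate because both are computed at the generic point.
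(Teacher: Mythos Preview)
Your proof is correct and follows essentially the same approach as the paper: both reduce to the Artinian local ring $A=K(\EuScript{X})\otimes_K L$ at the generic point, use that $S_1$ is preserved under flat base change so that reducedness is equivalent to $R_0$, and compare $[L:K]$ with $[K(\cal Y):K(\EuScript{X})]$. The only cosmetic difference is that the paper invokes the Cohen structure theorem to embed the residue field into $A$, whereas you use the equivalent identity $\dim_{K(\EuScript{X})}A=\length(A)\cdot[A/\mathfrak{m}_A:K(\EuScript{X})]$; both yield the same inequality with equality exactly when $A$ is a field.
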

\begin{proof}
First recall that reducedness is equivalent to having properties $R_0$ and $S_1$.
Since $T\to S$ is a universal homeomorphism, to show that Serre's condition $S_k$ is preserved we may reduce to the local case. Since $\EuScript{X}\times_ST\to\EuScript{X}$ is flat, then \cite[Theorems 15.1 and 23.3]{matsumura_commutative_1989} imply that $\EuScript{X}\times_ST$ is $S_1$ because $\EuScript{X}$ is.
So it remains to show that $\EuScript{X}\times_ST$ is not $R_0$ if and only if the degree inequality is strict.
Since the extension $K(T)/K(S)$ is purely inseparable, $K(\EuScript{X})\otimes_{K(S)}K(T)$ is an Artinian local algebra, and its dimension as a $K(\EuScript{X})$-module is exactly $p^{\deg(T\to S)}$  because $K(T)$ is a flat $K(S)$-module. Since the residue field of this algebra is $K(\cal Y)$, then it contains a copy of $K(\cal Y)$ by the Cohen structure theorem, and the containment is strict if and only if the base change is non-reduced.
\end{proof}

Finally, we prove that $\cal Z\to T$ is its own Stein factorisation:

\begin{lemma}\label{lem:red_is_stein}
	In the situation of \autoref{sub:set-up},
	$h_*\cal O_\cal Z=\cal O_T$.
\end{lemma}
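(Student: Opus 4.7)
The plan is to apply \autoref{lem:stein_characterisation} to the composition $g=h\circ\nu\colon\cal Y\to T$ (with $\cal Y$ normal) and then sandwich $h_*\cal O_\cal Z$ between $\cal O_T$ and $g_*\cal O_\cal Y$. Since $\cal Z$ is reduced and $\nu$ is birational, the natural maps $\cal O_T\hookrightarrow h_*\cal O_\cal Z\hookrightarrow h_*\nu_*\cal O_\cal Y=g_*\cal O_\cal Y$ are injective, so it will be enough to show $g_*\cal O_\cal Y=\cal O_T$. Since $T$ is normal, \autoref{lem:stein_characterisation} applied to $g$ reduces this to proving that $K(T)=L$ is integrally closed in $K(\cal Y)$.

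Next I would identify the function fields. Write $M:=K(\cal X)$. Then $K(\cal Y)=K(\cal Z)$ (as $\nu$ is birational) equals the compositum $M\cdot L$ inside $\overline M$, since the generic stalk of $\cal X\times_ST$ is the Artinian local ring $M\otimes_K L$, whose residue field is $M\cdot L$. Applying \autoref{lem:stein_characterisation} to $f$, the hypotheses $f_*\cal O_\cal X=\cal O_S$ and normality of $S$ give that $K$ is algebraically closed in $M$.

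The crux is to show that $L$ is algebraically closed in $M\cdot L$. Given $\gamma\in M\cdot L$ integral over $L$, $\gamma$ is algebraic over $K$ and so lies in $\overline K$. The height-one hypothesis on $\tau$ gives $L\subseteq K^{1/p}$, hence $M\cdot L\subseteq M^{1/p}$; and any $\delta\in\overline K\cap M^{1/p}$ satisfies $\delta^p\in M\cap\overline K=K$, so $\overline K\cap M^{1/p}=K^{1/p}$. Thus $\gamma$ lies in $E:=K^{1/p}\cap M\cdot L$, which contains $L$ and satisfies $M\cdot E=M\cdot L$. Since $K$ is algebraically closed in $M$, no element of $K\setminus K^p$ can have a $p$\textsuperscript{th} root in $M$ (such a root would be algebraic over $K$ and hence in $K$), and a short induction on the number of generators shows that $M\otimes_K F$ is a field for every purely inseparable $F/K$ of height one. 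Applied to $F=E$ and $F=L$, this yields the linear disjointness identities $[M\cdot E:M]=[E:K]$ and $[M\cdot L:M]=[L:K]$, and combined with $M\cdot E=M\cdot L$ these force $E=L$, as required.

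The main obstacle I expect is this final field-theoretic step, specifically establishing that $K$ being algebraically closed in $M$ makes every purely inseparable subextension of $K^{1/p}$ linearly disjoint from $M$ over $K$; all of the geometric content is packaged cleanly via \autoref{lem:stein_characterisation}.
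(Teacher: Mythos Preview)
Your sandwich strategy cannot work, because the statement $g_*\cal O_\cal Y=\cal O_T$ that you aim to prove is \emph{false} in general. Take $K=\bb F_p(s,t)$, let $X=\{x^p+sy^p+tz^p=0\}\subset\bb P^2_K$ (the $n=1$, $m=1$ case of \autoref{exmp:pm_fermat_example}, so $X$ is regular with $H^0(X,\cal O_X)=K$), and let $L=K(s^{1/p})\subsetneq K^{1/p}$. Writing $M=K(X)$, on the chart $z=1$ the defining relation gives
\[
t=-x^p-sy^p=-x^p-(s^{1/p}y)^p\in M^p(s)=\bigl(M(s^{1/p})\bigr)^p,
\]
so $t^{1/p}\in M\cdot L=K(\cal Y)$, while $t^{1/p}\notin L$. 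Hence $L$ is \emph{not} integrally closed in $K(\cal Y)$, and by \autoref{lem:stein_characterisation} we have $g_*\cal O_\cal Y\supsetneq\cal O_T$. (Concretely, the unique point of $\cal Z\cap\{z=0\}$ is singular on $\cal Z$, and $t^{1/p}$ becomes a global regular function only after normalising.) Nevertheless $h_*\cal O_\cal Z=\cal O_T$ holds here, since $\cal Z=X\otimes_KL$ is already reduced and one computes $H^0(\cal Z,\cal O_\cal Z)=L$ from the structure sequence on $\bb P^2_L$.

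The precise place your argument breaks is the inductive step of the linear disjointness claim. The base case is fine: $a\in K\setminus K^p$ implies $a\notin M^p$ because $K$ is algebraically closed in $M$. But once you adjoin $s^{1/p}$ to $M$, the relation in $M$ can force other elements of $K$ (here $t$) into $\bigl(M(s^{1/p})\bigr)^p$, so $M'\otimes_{F'}F'(\alpha_n)$ need not be a field. In other words, ``$K$ algebraically closed in $M$'' does \emph{not} imply that $M/K$ is separable, and it is separability that would be needed for $M\otimes_K F$ to be reduced for all $F\subseteq K^{1/p}$. The paper avoids the normalisation entirely: flat base change gives $(f_T)_*\cal O_{\EuScript{X}\times_ST}=\cal O_T$, and since $\cal O_T$ is already reduced, passing to the reduced subscheme $\cal Z$ cannot enlarge the degree-zero part of a homogeneous coordinate ring.
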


\begin{proof}
	We may assume that $S$ and $T$ are affine.
	Since $f_*\cal O_{\EuScript{X}}=\cal O_S$ and $\EuScript{X}\to S$ is assumed flat, it follows that ${f_T}_*\cal O_{\EuScript{X}\times_S T}=\cal O_{T}$ by \cite[\href{https://stacks.math.columbia.edu/tag/03GY}{Tag 03GY}]{stacks}.  So if $A$ is a homogeneous coordinate ring of $\EuScript{X}\times_S T$, then in the zeroth graded piece, $A_0=\cal O_T$ is reduced. Now $\cal Z$ is realized as $\Proj(A/\sqrt{0})$, and the quotient map $A\to A/\sqrt{0}$ is a graded homomorphism that induces an isomorphism in grading 0. Therefore $(A/\sqrt{0})_0=\cal O_T$; that is, $H^0(\cal Z,\cal O_\cal Z)=H^0(T,\cal O_T)$.
\end{proof}

We now prove a version of \autoref{thm:trick} for fibrations, from which \autoref{thm:trick} follows immediately by restriction to generic fibres. 
The purpose of \autoref{thm:trick} is to reduce the study of geometrically non-reduced varieties to the case where $H^0(\cal Z,\Omega_{\cal Z/\EuScript{X}}^\vee)= 0$.
We see from the sequence
\[\xymatrix{
	0 \ar[r] & \Omega_{\cal Z/\EuScript{X}}^\vee \ar[r] & h^*\cal T_{T/S} \ar[r] & (I/I^2)^\vee}
\]
that on such a variety we also have
\[H^0(\cal Z, h^*\cal T_{T/S})\hookrightarrow H^0(\cal Z,(I/I^2)^\vee).
\]

\begin{proposition}[\autoref{thm:trick}]\label{trick}
With notation as in \autoref{sub:set-up},
after possibly replacing $S$ and $T$ by dense open subschemes
there exist normal $k$-varieties and $k$-morphisms fitting into the following commutative diagram
\[	\xymatrix{
		\cal Y\ar[r]\ar[d] &\cal Y'\ar[r]\ar[d] & \EuScript{X}\ar[d]\\
		T\ar[r] & T'\ar[r] &S	
	}
\]
such that the following hold:
\begin{enumerate}
	\item
	$\cal Y'$ is the normalisation of $\cal Z':=(\EuScript{X}\times_S T')_{\mathrm{red}}$.
\item\label{itm:same_deg_vars}
$\cal Y\to\cal Y'$ and $T\to T'$ have the same degree.
\item\label{itm:subext_factors_vars}  If $T\to T''\to S$ satisfies property \autoref{itm:same_deg_vars} with respect to $\cal Y''=(\EuScript{X}\times_S T'')_{\mathrm{red}}^{\nu}$, then there is a factorisation
\[T\to T''\to T'\to T.
\]
\item
$\cal Y'\times_{T'}T$ is reduced and $\cal Y$ is its normalisation.
	\item\label{itm:omega_zero_vars}
$h'_*(\Omega_{\cal Z'/\EuScript{X}}^\vee )=0$, 
where $h'\colon\cal Z'\to T'$.

\end{enumerate}

Furthermore, $T'$ is the unique subextension satisfying both \autoref{itm:same_deg_vars} and \autoref{itm:omega_zero_vars}, and the unique subextension satisfying both  \autoref{itm:same_deg_vars} and \autoref{itm:subext_factors_vars}.
\end{proposition}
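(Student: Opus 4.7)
The plan is to work at the generic fibre and define $L'$ explicitly as $L' := L \cap F$, where $F = K(X)$ and both $L$ and $F$ are viewed as subfields of the compositum $K(Y) = F \cdot L$.  This compositum is a field because $L/K$ is purely inseparable, so $(F \otimes_K L)_{\mathrm{red}} = F \cdot L$.  This $L'$ coincides with the subfield corresponding under \autoref{lem:foliation_height_one} to the subfoliation of $\cal T_{T/S}|_{\mathrm{gen}}$ obtained by descending the saturation foliation $\cal F'_{T/S}$ of \autoref{sec:reinterpret}: at the generic point, the restriction map
\[
\cal F|_{\mathrm{gen}} = \mathrm{Der}_F(K(Y), K(Y)) \hookrightarrow \mathrm{Der}_K(L, K(Y)) = \mathrm{Der}_K(L, L) \otimes_L K(Y)
\]
is injective (a derivation of $F \cdot L$ over $F$ vanishing on $L$ is zero), and its $K(Y)$-span equals $\mathrm{Der}_{L'}(L, L) \otimes_L K(Y)$.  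After shrinking $S$ and $T$ to dense open subsets, this produces a normal $k$-variety $T'$ with morphisms $T \to T' \to S$, and I set $\cal Y' := (\EuScript{X} \times_S T')^\nu_{\mathrm{red}}$ to complete the diagram.

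For \autoref{itm:same_deg_vars}, since $L' \subset F$ we have $K(Y') = F \cdot L' = F$, so $[K(Y) : K(Y')] = [F \cdot L : F]$.  The equality $[L : L \cap F] = [F \cdot L : F]$ for purely inseparable $L/K$ of height one follows from $p$-basis considerations: one chooses a $p$-basis $\alpha_1, \dots, \alpha_d$ of $L/K$ such that, after reordering, $\alpha_1, \dots, \alpha_r$ remain $p$-independent over $F$ and $\alpha_{r+1}, \dots, \alpha_d \in F$, whence $[F \cdot L : F] = p^r = [L : L \cap F]$.  The equivalences in \autoref{itm:same_deg} of \autoref{thm:trick} then follow from \autoref{lem:normalised_base_change_degree} applied to the base change $\cal Y' \times_{T'} T$.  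Property \autoref{itm:omega_zero_vars} is immediate: $\cal Z' \to \EuScript{X}$ is birational (both have function field $F$), so $\Omega_{\cal Z'/\EuScript{X}}$ is a torsion sheaf on the integral scheme $\cal Z'$ and hence $\Omega_{\cal Z'/\EuScript{X}}^\vee = 0$, giving $h'_*(\Omega_{\cal Z'/\EuScript{X}}^\vee) = 0$.

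The main content is the universal property \autoref{itm:subext_factors_vars}, which also yields uniqueness.  Suppose $L''$ satisfies \autoref{itm:same_deg_vars} and let $a \in L \cap F$ with $a \notin L''$.  Then $a^p \in K \subset L''$ forces $[L''(a) : L''] = p$, yet $F \cdot L''(a) = F \cdot L''$ because $a \in F$, so $[F \cdot L''(a) : F \cdot L''] = 1$.  Combined with $[F \cdot L : F \cdot L''(a)] \leq [L : L''(a)]$, this gives
\[
[F \cdot L : F \cdot L''] \leq [L : L''(a)] < p \cdot [L : L''(a)] = [L : L''],
\]
contradicting \autoref{itm:same_deg_vars}.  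Hence $L \cap F \subseteq L''$, giving the required factorisation.  For the characterisation via \autoref{itm:same_deg_vars} and \autoref{itm:omega_zero_vars}: the universal property just proved forces $L' \subseteq L''$, and any strict extension $L'' \supsetneq L'$ also satisfying \autoref{itm:same_deg_vars} would have $[F \cdot L'' : F] = [L'' : L']$, forcing $F \cdot L'' \cong F \otimes_{L'} L''$ and hence $\cal T_{\cal Z''/\EuScript{X}}|_{\mathrm{gen}} = \mathrm{Der}_{L'}(L'', L'') \otimes_{L''} K(Y'')$; pushing forward to $T''$ then yields a nonzero sheaf containing the image of $\cal T_{T''/T'}$, contradicting \autoref{itm:omega_zero_vars}.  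I expect the main obstacle to be executing the $p$-basis reduction rigorously when $F/K$ is not separable, and carefully globalising the construction so that the diagram consists of $k$-morphisms of normal varieties after the shrinking of $S$ and $T$.
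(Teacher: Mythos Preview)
Your definition $L' := L \cap F$ collapses to $K$ under the standing hypotheses, so the construction fails in every nontrivial case. By \autoref{lem:stein_characterisation}, the assumption $f_*\cal O_{\EuScript{X}} = \cal O_S$ of \autoref{sub:set-up} forces $K$ to be algebraically closed in $F = K(\EuScript{X})$; since $L/K$ is algebraic, $L \cap F = K$. Your $T'$ is then $S$ and your $\cal Y'$ is $\EuScript{X}$, so property \autoref{itm:same_deg_vars} would assert $\deg(\cal Y \to \EuScript{X}) = \deg(T \to S)$, which by \autoref{lem:normalised_base_change_degree} holds exactly when $\EuScript{X} \times_S T$ is already reduced. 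The flaw in your verification is the $p$-basis claim: a $p$-basis of $L/K$ can become $p$-dependent over $F$ with no individual element lying in $F$. In \autoref{exmp:pm_fermat_example} with $n = m = 1$, neither $s_0^{1/p}$ nor $s_1^{1/p}$ is in $F$, yet the defining relation yields $s_0^{1/p}(x_0/x_2) + s_1^{1/p}(x_1/x_2) + 1 = 0$ in $FL$, so $[FL:F] = p$ while $[L:L\cap F] = [L:K] = p^2$. The same example refutes your description of $\cal F'|_{\mathrm{gen}}$: with $L' = K$ it would give $\cal F' = g^*\cal T_{T/S}$ generically, but there $\cal Q_{T/S} = g^*\cal T_{T/S}/\cal F'$ has positive rank.

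The paper does not build $T'$ from an intersection of function fields; it takes $T'$ to be the quotient of $T$ by the foliation $\cal G := h_*(\Omega_{\cal Z/\EuScript{X}}^\vee) \subset \cal T_{T/S}$. Concretely, a derivation $\delta \in \Der_K(L,L)$ lies in $\cal G$ when its $\cal O_{\EuScript{X}}$-linear extension to $\cal O_{\EuScript{X} \times_S T}$ preserves the nilradical $I$, a condition that depends on the nilpotent structure of $\EuScript{X} \times_S T$ and is invisible in the reduced compositum $K(\cal Y) = FL$. The paper then checks that $\cal G$ is saturated and $p$-closed, constructs $\cal Y'$ via the induced image foliation $\cal H \subset \Omega_{\cal Z/\EuScript{X}}^\vee$ on $\cal Y$, and obtains the degree equality \autoref{itm:same_deg_vars} from the isomorphism $h^*\cal G \cong \cal H$.
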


\begin{proof}

We may replace $S$ and $T$ by open subsets to assume that $T=\Spec A$ is 
affine, $\Omega_{T/S}\cong\cal O_T^{\oplus d}$ is free, and
$g\colon\cal Z\to T$ is flat.
We will show that, after shrinking the base, $h_*(\Omega_{\cal Z/\EuScript{X}}^\vee)$ is a foliation on $T$, and it induces a foliation $\cal H$ on $\cal Y$. We then show that the varieties obtained by taking the quotients of $T$ and $\cal Y$ by these respective foliations satisfy the desired properties.

Pushing forward the inclusion $\Omega_{\cal Z/\EuScript{X}}^\vee\hookrightarrow h^*\cal T_{T/S}$ yields an injection
\[h_*(\Omega_{\cal Z/\EuScript{X}}^\vee) \hookrightarrow h_*h^*\cal T_{T/S}\cong\cal T_{T/S} .
\]
where the isomorphism uses the fact that $\cal T_{T/S}$ is free, and $h_*\cal O_{\cal Z}=\cal \cal O_T$ by \autoref{lem:red_is_stein}.

\textbf{Step 1:}\label{saturationispclosed}
$\cal G:=h_*(\Omega_{\cal Z/\EuScript{X}}^\vee)\subset\cal T_{T/S}$ is a foliation, inducing a morphism $T\to T'$.

From the sequence
\[\xymatrix{
0 \ar[r] & \Omega_{\cal Z/\EuScript{X}}^\vee \ar[r] & h^*\cal T_{T/S} \ar[r] & (I/I^2)^\vee}
\]
we see that $\Omega_{\cal Z/\EuScript{X}}^\vee \subset h^*\cal T_{T/S}$ is a saturated subsheaf, because its cokernel is a subsheaf of a torsion free sheaf. Therefore 
$h_*(\Omega_{\cal Z/\EuScript{X}}^\vee)$ is saturated in $h_*h^*\cal T_{T/S}$.
Since closedness under Lie brackets follows from $p$-closedness
\cite{gerstenhaber_galois_1964},
we only need to show the $p$-closedness of $\cal G$.  Recall that we assume that $T=\Spec A$.

Let $\Delta\in H^0(T,\cal G)\subset H^0(T,h_*h^*\cal T_{T/S})=H^0(\cal Z,h^*\cal T_{T/S})$.
We first unravel what it means to say $\Delta\in H^0(T,\cal G)$.
Let $\{U_i=\Spec B_i\}$ be an affine open cover of $\cal Z$.
Then $\Delta$ corresponds to a compatible collection of
\[\Delta_i\in\Gamma(U_i,\Hom_{B_i}(\Omega_{\cal Z/\EuScript{X}}|_{U_i},B_i))=\Der_{\cal O_\EuScript{X}}(B_i,B_i)
\]
and since they glue, the images of the $\Delta_i$ when applied to a fixed element of $A=H^0(\cal Z,\cal O_{\cal Z})\subset B_i$ also glue to give an element of $H^0(\cal Z,\cal O_\cal Z)=A$.
Now each $\Delta_i^p\in\Der_{\cal O_\EuScript{X}}(B_i,B_i)$, and the collection $\{U_i,\Delta_i^p\in\Gamma(U_i,\Hom_{B_i}(\Omega_{\cal Z/\EuScript{X}}|_{U_i},B_i)\}$ still glues
since the following diagram commutes.
\[\xymatrixcolsep{4pc}\xymatrix{
\cal O_{U_i} \ar[r]^-{\Delta_i} \ar[d] & \cal O_{U_i} \ar[r]^-{\Delta_i} \ar[d] & \cdots \ar[r]^-{\Delta_i} & \cal O_{U_i} \ar[d] \\
\cal O_{U_i\cap U_j} \ar[r]^-{\Delta_i|_{\cal O_{U_i\cap U_j}}} & \cal O_{U_i\cap U_j} \ar[r]^-{\Delta_i|_{\cal O_{U_i\cap U_j}}} & \cdots \ar[r]^-{\Delta_i|_{\cal O_{U_i\cap U_j}}} & \cal O_{U_i\cap U_j}}
\]
So we obtain a section  of $H^0(T,\cal G)$ which agrees with $\Delta^p$ as a function on $\cal O_T$.

	\textbf{Step 2:}	The image $\cal H$ of  $h^*h_*\Omega_{\cal Z/\EuScript{X}}^{\vee}\to \Omega_{\cal Z/\EuScript{X}}^{\vee}$ is $p$-closed.

		Let $U=\Spec(B)$ be an affine open subset of $\cal Z$.  Then $\Omega_{\cal Z/\EuScript{X}}^{\vee}(U)=\Der_{\cal O_{\EuScript{X}}}(B,B)$.  As in the proof of the previous lemma, a section coming from $h^*h_*(\Omega_{\cal Z/X}^{\vee})$ is a compatible system of derivations on charts, which glue to give a global section.  We saw above that this means their $p$-powers also glue to give a global section.  It follows from this that if $\Delta$ is in the image of  $h^*h_*(\Omega_{\cal Z/\EuScript{X}}^{\vee})\to \Omega_{\cal Z/\EuScript{X}}^{\vee}$ then so is $\Delta^p$.

\textbf{Step 3:}  Obtaining a variety $\cal Y'$ fitting into the following diagram:
	\[
	\xymatrix{
		\cal Y\ar[r]\ar[d] &\cal Y'\ar[r]\ar[d] & \EuScript{X}\ar[d]\\
		T\ar[r] & T'\ar[r] &S	.
	}
	\]

Let $U$ be the open subset on which $\cal Y\cong \cal Z$.  Then $\cal H$ induces a morphism $U\to U'\to \EuScript{X}$, from which we can take normalisations of $\EuScript{X}$ in the corresponding function fields, to get a normal variety $\cal Y'$ that factors the morphism $\cal Y\to \cal Y'\to \EuScript{X}$.
The middle vertical arrow exists because $\cal O_{T'}$ is killed by $\cal G$, and so its image in $\cal O_U$ is also killed by the image of $h^*\cal G$  in $\Omega^{\vee}_{\cal Z/\EuScript{X}}(U)$.

\textbf{Step 4:} $\deg(\cal Y\to \cal Y')=\deg(T\to T')$.

Consider the composition
 \[h^*h_*\Omega^{\vee}_{\cal Z/\EuScript{X}}\twoheadrightarrow \cal H\hookrightarrow \Omega_{\cal Z/\EuScript{X}}^\vee\hookrightarrow h^*\cal T_{T/S}.
 \]

As $h^*h_*\Omega_{\cal Z/\EuScript{X}}^\vee\to h^*\cal T_{T/S}$ is injective by flatness, we find that $h^*h_*\Omega_{\cal Z/\EuScript{X}}^\vee\cong \cal H$, and so $\deg(\cal Y\to \cal Y')=\deg(T\to T')=p^{\mathrm{rank}{\cal H}}$.

\textbf{Step 5:} We claim that both squares in the diagram of Step 3 are normalised base changes and $\cal Y'\times_{T'}T$ is reduced.

Let $\tilde{\cal Y'}$ be the normalisation of $\EuScript{X}\times_S T'$.  By universal properties this fits into a similar diagram as above, where both squares are normalised base changes, so it suffices to show that $\cal Y'=\tilde{\cal Y'}$.  By universal properties there is a morphism $\cal Y'\to \tilde{\cal Y'}$, but as $\cal Y$ is the normalised base change of $\tilde{\cal Y'}$ by $T$, the morphism $\cal Y\to \tilde{\cal Y'}$ can have degree at most that of $T\to T'$.  Therefore $\cal Y'\to \tilde{\cal Y'}$ is an isomorphism as both are normal and the morphism between them is forced to have degree at most $1$.  That $\cal Y'\times_{T'}T$ is reduced follows from \autoref{lem:normalised_base_change_degree}.

\textbf{Step 6: } Our varieties satisfy part \autoref{itm:subext_factors_vars}.

Suppose 	\[
\xymatrix{
	\cal Y\ar[r]\ar[d] &\cal Y''\ar[r]\ar[d] & \EuScript{X}\ar[d]\\
	T\ar[r] & T''\ar[r] &S	
}
\]
is a diagram of normalised base changes such that $\deg(\cal Y\to \cal Y'')=\deg(T\to T'')$.  Then let $\cal G''$ be the foliation determining $T\to T''$ and $\cal H''$ be the foliation determining $\cal Y\to \cal Y''$.  We claim that $\cal G''\subseteq \cal G$ and $\cal H''\subseteq \cal H$.

Let 
\[\xymatrix{
	\cal Z\ar[r]\ar[d] &\cal Z''\ar[r]\ar[d] & \EuScript{X}\ar[d]\\
	T\ar[r] & T''\ar[r] &S	
}
\]
be the corresponding diagram of reduced base changes.  Note that it follows from the assumptions on degrees that $\cal Z=\cal Z''\times_{T''} T$, that is to say this base change is already reduced.  This means $\Omega_{\cal Z/\cal Z''}=h^*\Omega_{T/T''}$, which is trivial (after shrinking the base).  The surjection 
$
\Omega_{\cal Z/\EuScript{X}}\twoheadrightarrow \Omega_{\cal Z/\cal Z''}
$
dualises to give an injection
$\Omega_{\cal Z/\cal Z''}^\vee\hookrightarrow  \Omega_{\cal Z/\EuScript{X}}^\vee$.

This means that $h_*(\Omega_{\cal Z/\cal Z''}^\vee)=\cal G''\subset \cal G$, and hence we also have that $\cal H''\subset \cal H$ (after restricting to a suitable open subset and taking saturations).
 
So that there is a factorisation
\[\xymatrix{
	\cal Y\ar[r]\ar[d] &\cal Y''\ar[r]\ar[d] &\cal Y'\ar[r]\ar[d]& \EuScript{X}\ar[d]\\
	T\ar[r] & T''\ar[r] & T'\ar[r]&S	
}
\]
from which we get

\[\xymatrix{
	\cal Z\ar[r]\ar[d] &\cal Z''\ar[r]\ar[d] &\cal Z'\ar[r]\ar[d]& \EuScript{X}\ar[d]\\
	T\ar[r] & T''\ar[r] & T'\ar[r]&S	
}\]
where the left and middle squares are Cartesian.  By a similar argument to above, this means that $H^0(\cal Z'',\Omega_{\cal Z''/\EuScript{X}}^\vee)\neq 0$ so long as $\cal Z''\neq \cal Z'$, as the morphism $\cal Z''\to \cal Z'$ determines such a global section.

\textbf{Step 7:} Our varieties satisfy part \autoref{itm:omega_zero_vars}.
 
Suppose that $h_*(\Omega^\vee_{\cal Z'/\EuScript{X}})\neq 0$.  Then by applying steps 1-5, we obtain a morphism $T'\to T''$ with positive degree such that $\cal Y''\times_{T''} T$ is reduced.  But by step 6 this implies that there is a morphism $T''\to T'$ such that the composition factors into $T\to S$, and so $T''=T'$, giving a contradiction.

\textbf{Step 8:}  The ``furthermore."

It is clear that there can be at most one morphism satisfying both \autoref{itm:same_deg_vars} and \autoref{itm:subext_factors_vars}.  We have shown the existence of this already.   Now suppose we have another morphism $T\to T''$ satisfying \autoref{itm:same_deg_vars} and \autoref{itm:omega_zero_vars}.  Then by \autoref{itm:same_deg_vars} we obtain a factorisation $T\to T''\to T'$.  But then $\deg(\cal Y''\to \cal Y')=\deg(T''\to T')$, and as in the argument of step 6, $h''_*(\Omega_{\cal Z''/\EuScript{X}}^\vee)\neq 0$ unless this degree is $1$, as required.

\end{proof}

\begin{proposition}[\autoref{main_theorem} \autoref{M_reducedness}]\label{prop:M=0_iff_reduced}
In the situation of \autoref{sub:set-up}, after possibly replacing $S$ and $T$ by open subschemes, we have
$\frak M_{T/S}=0$ $\iff$ $\EuScript{X}\times_S T$ is reduced.
\end{proposition}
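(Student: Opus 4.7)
For the $(\Leftarrow)$ direction, if $\mathcal{X}\times_S T$ is reduced it is in particular $R_0$, so \autoref{lem:Q_zero} yields $\mathcal{Q}_{T/S}=0$, whence $(\nu^*\Omega_{\mathcal{Z}/\mathcal{X}})^{\vee\vee}\cong g^*\Omega_{T/S}$ on $U$. The defining surjection of $\mathfrak{M}$ is then an isomorphism of trivial rank-one bundles (since $\bigwedge^d g^*\Omega_{T/S}$ is trivial), so its unique associated section is nowhere zero and $\mathfrak{M}$ contains only the zero divisor. For the converse, the strategy is to combine \autoref{lem:plucker} with the essential part construction \autoref{trick} to force the essential subextension $T\to T'\to S$ to satisfy $T'=S$.

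Assuming $\mathfrak{M}=0$, the first step is to translate this into the pullback assertion $\mathcal{F}'_{T/S}|_U=g^*\mathcal{E}|_U$ for some saturated subsheaf $\mathcal{E}\subseteq\mathcal{T}_{T/S}$: by \autoref{lem:plucker} the rational map $\mathcal{Y}\dashrightarrow\mathrm{Gr}_T(r,\mathcal{T}_{T/S})$ agrees, after Pl\"ucker, with the one defined by $\mathfrak{M}$; triviality of the latter linear system says this map factors through a section of the Grassmannian bundle over $T$, which is equivalent to $\mathcal{F}'$ being a pullback. Next, I would identify $\mathcal{E}$ with the foliation $\mathcal{G}=h_*\Omega_{\mathcal{Z}/\mathcal{X}}^\vee$ from Step 1 of the proof of \autoref{trick}: since $\mathcal{F}'=(\nu^*\Omega_{\mathcal{Z}/\mathcal{X}})^{\vee\vee}$ equals $\Omega_{\mathcal{Z}/\mathcal{X}}^\vee$ on the locus where $\nu$ is an isomorphism, pushing the resulting equality $\Omega_{\mathcal{Z}/\mathcal{X}}^\vee=h^*\mathcal{E}$ forward via $h$ and using $h_*\mathcal{O}_\mathcal{Z}=\mathcal{O}_T$ from \autoref{lem:red_is_stein} gives $\mathcal{G}=\mathcal{E}$. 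Then \autoref{trick} produces the essential subextension $T\to T'\to S$ with defining foliation $\mathcal{G}$, so $\deg(T\to T')=p^r$. By \autoref{trick} we have $\deg(\mathcal{Y}\to\mathcal{Y}')=\deg(T\to T')=p^r$, and also $\deg(\mathcal{Y}\to\mathcal{X})=p^r$, so $\mathcal{Y}'\cong\mathcal{X}$ (both being normal and purely inseparable over $\mathcal{X}$ of degree $1$). Hence $K(T')\subseteq K(\mathcal{X})$; the hypothesis $f_*\mathcal{O}_\mathcal{X}=\mathcal{O}_S$ makes $K=K(S)$ algebraically closed in $K(\mathcal{X})$ (via $H^0(X,\mathcal{O}_X)=K$), and since $K(T')/K$ is purely inseparable we conclude $T'=S$, so $\mathcal{X}\times_S T=\mathcal{Y}'\times_{T'}T$ is reduced by \autoref{trick}.

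The main technical obstacle is the first step of the converse: making precise the implication $\mathfrak{M}=0\Rightarrow \mathcal{F}'=g^*\mathcal{E}$. One must carefully analyze the defining surjection $\bigwedge^r g^*\Omega_{T/S}|_U\twoheadrightarrow\det((\nu^*\Omega_{\mathcal{Z}/\mathcal{X}})^{\vee\vee})|_U$, observe that when all $\binom{d}{r}$ Pl\"ucker sections of the latter line bundle give zero divisor they must be proportional to a single nowhere-vanishing section, and interpret this proportionality via the Grassmannian as the factorisation of $\mathcal{Y}\dashrightarrow\mathrm{Gr}_T(r,\mathcal{T}_{T/S})$ through a section of the bundle. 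Once this translation is secured, the rest of the proof assembles cleanly from \autoref{trick}, \autoref{lem:red_is_stein}, and the Stein-factorisation hypothesis on $f$.
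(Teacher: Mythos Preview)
Your proof is correct, and it takes a genuinely different route from the paper's argument for the $(\Rightarrow)$ direction.

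The paper argues the contrapositive: assuming $\EuScript{X}\times_S T$ is non-reduced, it first reduces to the essential case $H^0(\cal Z,\Omega_{\cal Z/\EuScript{X}}^\vee)=0$ by passing to $T'$ via \autoref{trick} and invoking \autoref{lem:compare_Qs} (a separate technical comparison of the determinants $\det\cal F'$ in a tower $T\to T'\to S$) to ensure $\frak M_{T'/S}\neq 0$ forces $\frak M_{T/S}\neq 0$. In the essential case it then observes that no nonzero pulled-back subsheaf can sit inside $\Omega_{\cal Z/\EuScript{X}}^\vee$, so the Grassmannian map is nonconstant.

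Your approach bypasses \autoref{lem:compare_Qs} entirely. You work directly: the triviality of $\frak M$ makes the Grassmannian map factor through a $T$-section, i.e.\ $\cal F'=g^*\cal E$ for a subbundle $\cal E\subset\cal T_{T/S}$; the identification $\cal E=\cal G$ via pushforward and \autoref{lem:red_is_stein} then lets \autoref{trick} produce $T'$ with $\deg(T\to T')=p^r=\deg(\cal Y\to\EuScript{X})$, whence $\cal Y'\cong\EuScript{X}$, and the Stein hypothesis $f_*\cal O_{\EuScript{X}}=\cal O_S$ forces $K(T')=K(S)$. This is arguably cleaner; the price is that the saturation argument extending $\Omega_{\cal Z/\EuScript{X}}^\vee=h^*\cal E$ from the locus where $\nu$ is an isomorphism to all of $\cal Z$ (needed before pushing forward) deserves a sentence: both sides are saturated in $h^*\cal T_{T/S}$ (the left by Step 1 of \autoref{trick}, the right by \autoref{lem:pullback_saturated_saturated} since $\cal E$ is a subbundle), and saturated subsheaves agreeing at the generic point agree everywhere. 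With that detail made explicit, your argument is complete and self-contained without \autoref{lem:compare_Qs}.
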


\begin{proof}
	If $\EuScript{X}\times_S T$ is reduced, then it follows from \autoref{lem:Q_zero} that $\frak M=0$.  So suppose that $\EuScript{X}\times_S T$ is non-reduced.  	Apply \autoref{trick} to obtain $\cal Y'$ and $T'$ such that $\cal Y'\times_{T'}T$ is reduced.
	The linear system $\frak M_{T/S}\subset |\det((\nu^*\Omega_{\cal Z/\cal X})^{\vee\vee})|=|-\det\cal F'_{T/S}|$ being 0 is equivalent to the line bundle $-\det\cal F'_{T/S}|_U$ being trivial, where $U$ is the big open subset from \autoref{sub:construct_M}, and similarly for $T'/S$. Now $-\det\cal F'_{T/S}\geq -\psi^*\det\cal F'_{T'/S}$ by \autoref{lem:compare_Qs}, so $\frak{M}_{T'/S}\neq 0$ implies that $\frak{M}_{T/S}\neq0$. So we may replace $\cal Y$ and $T$ with  $\cal Y'$ and $T'$ to assume that  $H^0(\cal Z,\Omega_{\cal Z/\EuScript{X}}^\vee)=0$.

	Let $\cal R$ be the cokernel of $0\to \Omega_{\cal Z/\EuScript{X}}^\vee\to h^*\cal T_{T/S}$.  The map $h^*\cal T_{T/S}\to \cal R$ matches the map $g^*\cal T_{T/S}\to\cal Q$ on the open subset $U$ on which $\cal Y\cong\cal Z$. Note that the generic rank of $\cal R$ is less than that of $\cal T_{T/S}$.

		We see that  $H^0(\cal Z, h^*\cal T_{T/S})\to H^0(\cal Z, \cal R)$ is injective, and $\cal R$ is torsion free because it is a subsheaf of $(I/I^2)^\vee$.  Therefore for any open subset $U$ of $\cal Z$, the map $h^*\cal T_{T/S}|_U\to R|_U\to 0$ cannot be a quotient by a subsheaf that pulled back from $T$.  Therefore the rational map 
		 $\cal Z\dashrightarrow \cal W_{\Im}'$ induced by $h^*\cal T_{T/S}\to \cal R\to 0$ is non-trivial.  This matches the rational map given by $g^*\cal T_{T/S}\to \cal Q\to 0$ as these maps match on an open subset, which in turn matches the rational map induced by $\frak M$ by \autoref{lem:plucker}.  Therefore $\frak M\neq 0$.	
\end{proof}

\begin{proposition}\label{lem:compare_Qs}
	Let $f\colon \EuScript{X}\to S$ be a morphism of normal varieties with equidimensional fibres, and let $T\to T'\to S$ be a height one composition of morphisms of normal varieties.  Let the following be the corresponding normalised base changes and their associated foliations:
	\[\xymatrixcolsep{5em}\xymatrix{
		\cal Y \ar[d]^-g \ar[r]^{\psi}_{\cal F_{T/T'}} \ar@/^1.5pc/[rr]^{\phi}_{\cal F_{T/S}} & \cal Y' \ar[r]_{\cal F_{T'/S}} \ar[d]^-{g'} & \EuScript{X} \ar[d]^-f \\
		T \ar[r] & T' \ar[r] & S}.
	\]
	Let $\cal F'_{T/T'}$ denote the saturation of $\cal F_{T/T'}$ in $g^*\cal T_{T/T'}$, and similarly for $\cal F'_{T/S}$ and $\cal F'_{T'/S}$. Then
	\[ |\det\cal F'_{T/T'}+\psi^*\det\cal F'_{T'/S}-\det\cal F'_{T/S}|\neq \emptyset.
	\]
\end{proposition}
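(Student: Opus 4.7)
The plan is to construct, on a big open subset $U\subset\cal Y$ with complement of codimension at least two, a natural short exact sequence
\[ 0 \to \cal F'_{T/T'} \to \cal F'_{T/S} \to \cal I \to 0 \]
with $\cal I \subseteq \psi^*\cal F'_{T'/S}$ of generic rank $r_2:=\rank \cal F'_{T'/S}$. Taking determinants then yields $\det\cal F'_{T/S}\cong \det\cal F'_{T/T'}\otimes\det\cal I$, and the inclusion $\cal I\hookrightarrow\psi^*\cal F'_{T'/S}$ of locally free rank-$r_2$ sheaves induces an inclusion of line bundles $\det\cal I\hookrightarrow \psi^*\det\cal F'_{T'/S}$ with torsion cokernel, producing an effective Cartier divisor representing $\psi^*\det\cal F'_{T'/S}+\det\cal F'_{T/T'}-\det\cal F'_{T/S}$ on $U$. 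This extends uniquely to an effective Weil divisor on $\cal Y$ since $U$ is big.

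To construct the sequence, first restrict $S$, $T$ and $T'$ to big open subsets so that $g$, $g'$ and $\psi$ are all flat, the three sheaves $\cal F'_{T/T'}$, $\cal F'_{T/S}$, $\psi^*\cal F'_{T'/S}$ are locally free, and the pulled-back relative tangent sequence $0 \to g^*\cal T_{T/T'} \to g^*\cal T_{T/S} \to \psi^*g'^*\cal T_{T'/S} \to 0$ is short exact. Since $g^*\cal T_{T/T'}$ is saturated in $g^*\cal T_{T/S}$, \autoref{lem:pullback_saturated_saturated} gives $\cal F'_{T/T'}\subset\cal F'_{T/S}$, as the saturation of $\cal F_{T/T'}$ is the same in both ambient sheaves. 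The composition $\cal F_{T/S}\hookrightarrow g^*\cal T_{T/S}\twoheadrightarrow \psi^*g'^*\cal T_{T'/S}$ factors through $\psi^*\cal F_{T'/S}$ by stalk-level inspection: the image of a derivation $\Delta\colon\cal O_\cal Y\to\cal O_\cal Y$ killing $\cal O_\EuScript{X}$ along either route is the restriction $\Delta|_{\cal O_{T'}}$. Saturation of $\psi^*\cal F'_{T'/S}$ in $\psi^*g'^*\cal T_{T'/S}$, again by \autoref{lem:pullback_saturated_saturated}, then extends the map to $\cal F'_{T/S}\to\psi^*\cal F'_{T'/S}$, with image $\cal I$.

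For exactness, note that the kernel $\cal K$ of $\cal F'_{T/S}\to\cal I$ is saturated in $g^*\cal T_{T/T'}$, since its cokernel $\cal I$ is torsion-free, and contains $\cal F'_{T/T'}$. Computing at the generic point $\xi$ of $\cal Y$, the stalk $\cal F'_{T/S,\xi}\cong\Der_{K(\EuScript{X})}(K(\cal Y),K(\cal Y))$ has dimension $r_1+r_2$ over $K(\cal Y)$ (with $r_1:=\rank\cal F'_{T/T'}$), and the restriction-to-$K(\cal Y')$ map has kernel $\Der_{K(\cal Y')}(K(\cal Y),K(\cal Y))=\cal F_{T/T',\xi}$ of dimension $r_1$; thus $\cal K$ and $\cal F'_{T/T'}$ have equal generic rank $r_1$, and since a saturated subsheaf contained in another saturated subsheaf of equal rank must equal it, $\cal K=\cal F'_{T/T'}$. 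The main obstacle is the factorisation step: verifying that the paper's explicit embedding $\cal F\hookrightarrow g^*\cal T$ is genuinely compatible with the nested cotangent sequences, so that the image of $\cal F_{T/S}$ really lies in $\psi^*\cal F_{T'/S}$. This requires a careful stalk-level argument with derivations, exploiting that the relevant maps all compute the restriction of a fixed derivation on $\cal O_\cal Y$ to the nested subrings arising from the chains $\cal Y\to\cal Y'\to\EuScript{X}$ and $T\to T'\to S$.
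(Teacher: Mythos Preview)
Your proposal is correct and follows essentially the same route as the paper: both construct the left-exact sequence $0\to\cal F'_{T/T'}\to\cal F'_{T/S}\to\psi^*\cal F'_{T'/S}$, call the image $\cal I$ (the paper calls it $\cal G$), and conclude by taking determinants of $0\to\cal F'_{T/T'}\to\cal F'_{T/S}\to\cal I\to 0$ together with the inclusion $\cal I\hookrightarrow\psi^*\cal F'_{T'/S}$.

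The only notable difference is in how exactness on the left is established. The paper builds a three-row diagram and cites \cite[Theorem~3.11]{pw} for exactness of the unsaturated row $0\to\cal F_{T/T'}\to\cal F_{T/S}\to\psi^*\cal F_{T'/S}\to 0$, then deduces exactness of the saturated row from it by elementary torsion-freeness arguments. You instead argue directly at the generic point: the kernel $\cal K$ is saturated in $g^*\cal T_{T/T'}$ and contains $\cal F'_{T/T'}$, and a rank count using $K(\cal Y')=K(\EuScript{X})\cdot K(T')$ forces equality. Your approach is more self-contained but requires unwinding what the embedding $\cal F\hookrightarrow g^*\cal T_{T/S}$ does to a derivation; the paper's approach offloads this to the cited reference. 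One small imprecision: the map you compute is restriction to $K(T')$, not to $K(\cal Y')$; the kernel nonetheless consists of derivations vanishing on $K(\cal Y')$ because they already kill $K(\EuScript{X})$ and $K(\cal Y')$ is the compositum.
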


\begin{proof}
	
	Note that we are free to remove a closed subvariety of codimension at most two from $S$, $T$ and $T'$, and also from $\cal Y,\cal Y'$, and $\EuScript{X}$ (by equidimensionality) to assume that $g$, $g'$, and $\psi$ are flat. The flatness of $\psi$ is attainable because $\cal Y$ becomes regular and $\cal F_{T/T'}$ locally free after removing a closed subset of codimension at least 2, and in this situation $\psi$ is flat by Proposition 2.1.4 of~\cite{maddock}.
	
	We claim there is a diagram of exact sequences as follows:
	
	\[\xymatrix{
		0\ar[r] & \cal F_{T/T'}\ar@{^{(}->}[d]\ar[r]^-{a_1} & \cal F_{T/S} \ar[r]^-{b_1} \ar@{^{(}->}[d] & \psi^*\cal F_{T'/S}\ar@{^{(}->}[d] \ar[r] &0  \\
		0\ar[r] & \cal F'_{T/T'}\ar@{^{(}->}[d]\ar[r]^-{a_2} & \cal F'_{T/S} \ar[r]^-{b_2} \ar@{^{(}->}[d] & \psi^*\cal F'_{T'/S}\ar@{^{(}->}[d]  \\
		0\ar[r] & g^*\cal T_{T/T'}\ar[r] & g^* \cal T_{T/S} \ar[r] & \psi^*g'^*\cal T_{T'/S}  \ar[r] & 0}
	\]

	The proof of Theorem 3.11 in \cite{pw} gives exactness of the top and bottom rows of the diagram, so we only need to show that the middle row fits in.

	 $\psi^*\cal F_{T'/S}'$ is saturated in $\psi^*g'^*\cal T_{T'/S}$ by \autoref{lem:pullback_saturated_saturated}, and therefore $\psi^*\cal F_{T'/S}'$ is the saturation of $\psi^*\cal F_{T'/S}$ in $\psi^*{g'}^*\cal T_{T'/S}$. The left and middle exactness of the second row then follow from the exactness of the first:

	\begin{itemize}
		\item
		(Left exactness)
		If $\delta\in\Ker(\cal F'_{T/T'}\to\cal F'_{T/S})$ then $f\delta\in\Ker(\cal F_{T/T'}\to\cal F_{T/S})$ for some nonzero $f\in\cal O_\cal Y$, and so $\delta=0$ because $\cal F'_{T/S}$ is torsion free.
		\item
		(Middle exactness)
		If $\delta\in\Im(\cal F'_{T/T'}\to\cal F'_{T/S})$ then $f\delta\in\Im(\cal F_{T/T'}\to\cal F_{T/S})$ for some nonzero $f\in\cal O_\cal Y$, and so $0=b_1(f\delta)=f\cdot b_2(\delta)$. 
		
		Conversely, if $\Delta\in\Ker(b_2)$ then $g\Delta\in\Ker(b_1)=\Im(\cal F_{T/T'}\to\cal F_{T/S})$ for some nonzero $g\in\cal O_\cal Y$. Then there is an element $\delta\in\cal F_{T/T'}$ such that $a_1(\delta)=g\Delta$.  But $\delta\in\cal F'_{T/T'}$ and so $a_2(\delta)=g\Delta$.  But $\cal F'_{T/T'}$ is saturated in $g^*\cal T_{T/S}$, and so is saturated in $\cal F'_{T/S}$ too.  That means there must be $\delta'$ such that $a_2(\delta')=\Delta$.
	\end{itemize}
	
	Let $\cal G$ denote the image of $\cal F'_{T/S}$ in $\psi^*\cal F'_{T'/S}$.
	Since $\psi^*\cal F_{T'/S} \hookrightarrow\cal G\hookrightarrow\psi^*\cal F'_{T'/S}$, the ranks of $\cal G$ and $\psi^*\cal F'_{T'/S}$ are the same,
	and so the inclusion $\cal G\hookrightarrow\psi^*\cal F'_{T'/S}$ gives an element of
	$\Hom_\cal Y(\det\cal G,\psi^*\det\cal F'_{T'/S})\cong H^0(\cal Y,\psi^*\det\cal F'_{T'/S}-\det\cal G)$. Therefore
	\[ \det\cal F'_{T/S}=\det\cal F'_{T/T'}+\det\cal G\leq\det\cal F'_{T/T'}+\psi^*\det\cal F'_{T'/S}.
	\]
\end{proof}

\section{The universal property}\label{sec:morphism}

In this section we prove \autoref{fibre_reduced}, \autoref{maximal_reduced}, and \autoref{pullback_from_w} of \autoref{main_theorem}.  First we verify that we can replace $\EuScript{X}$ by a birational model, and in particular pass to a resolution of indeterminacies.

Recall, using \autoref{defn:w_and_v} and \autoref{lem:plucker}, that the varieties $\cal V$ and $\cal W$ are defined as follows. The movable linear system $\frak M_{T/S}$ on $\cal Y$ of \autoref{sec:section_M} defines a rational map $\cal Y\dashrightarrow\cal W_{\Im}$, and $\cal V$ is defined as the Stein factorisation of $\EuScript{X}\dashrightarrow \cal W_{\Im}^{p}$. Then $\cal W$ is defined as the normalisation of the maximal reduced subscheme of $\cal V\times_S T$.

\begin{proposition}\label{prop:assume_morphisms}
	The birational equivalence class of the rational map $\EuScript{X}\dashrightarrow \cal V$ and linear system $\frak M$ is independent of the choice of models $\EuScript{X}/S$ within its birational equivalence class.
\end{proposition}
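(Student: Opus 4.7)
The plan is to reduce to the case of a birational morphism and then to exploit the fact that every piece of data entering the construction of $\frak M$ and of $\cal V$ is determined either by the function field extension $K(\EuScript{X})/K$ (which is invariant of the model) or by the sheaf $\cal T_{T/S}$ (which is pulled back from $T$). Since any two normal models of $\EuScript{X}/S$ are dominated by a common normal model (for instance the normalisation of the graph of the birational map), it suffices to treat a birational morphism $\mu\colon\widetilde{\EuScript{X}}\to\EuScript{X}$ between normal models over $S$.

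First I would construct a canonical birational morphism $\pi\colon\widetilde{\cal Y}\to\cal Y$ between the normalised base changes, by pushing $\mu\times\id_T$ through the reduction and the normalisation and applying the universal property of normalisation; this is compatible with the foliations in the sense that the subfield $K(\EuScript{X})\subset K(\widetilde{\cal Y})=K(\cal Y)$ induced by either model is the same, so the generic fibres of $\cal F_{\widetilde{\cal Y}/\widetilde{\EuScript{X}}}\hookrightarrow g^*\cal T_{T/S}$ and $\cal F_{\cal Y/\EuScript{X}}\hookrightarrow g^*\cal T_{T/S}$ coincide after the identification $\pi^*g^*\cal T_{T/S}=\widetilde{g}^*\cal T_{T/S}$. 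After shrinking $S$ and $T$ as in \autoref{sub:set-up} and \autoref{sub:M_assumption}, the sheaf $\cal T_{T/S}$ is free, hence its pullback to both $\cal Y$ and $\widetilde{\cal Y}$ is locally free and compatible under $\pi$.

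Next I would compare the saturated subsheaves $\cal F'$ inside $g^*\cal T_{T/S}$. By \autoref{lem:pullback_saturated_saturated}, pulling back a saturated subsheaf under the flat locus of $\pi$ preserves saturation, so on the big open subset where $\pi$ is flat and where both constructions make sense, $\pi^{*}\cal F'_{\cal Y}$ and $\cal F'_{\widetilde{\cal Y}}$ are both saturated subsheaves of $\widetilde{g}^*\cal T_{T/S}$ of the same rank which agree at the generic point; hence they agree on this big open subset. This immediately gives that the corresponding quotients $\cal Q$ agree there and, after reflexivising, that the determinants agree up to the addition of $\pi$-exceptional Weil divisors on $\widetilde{\cal Y}$. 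These exceptional divisors are contained in the fixed part of the linear system coming from $\widetilde{\cal Y}$, so the movable parts $\frak M_{T/S}$ on the two models correspond under the birational map $\pi$, which is exactly the statement that $\frak M$ lies in a well-defined birational equivalence class.

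Finally, for the rational map to $\cal V$, I would argue that the rational map $\cal Y\dashrightarrow\cal W_{\Im}\subset\mathrm{Gr}_T(r,\cal T_{T/S})$ induced by the surjection $g^*\cal T_{T/S}\to\cal Q$ is determined by its action at the generic point of $\cal Y$, hence is the same for both models up to composition with $\pi$; in particular the image $\cal W_{\Im}$ (a closed subvariety of $\mathrm{Gr}_T(r,\cal T_{T/S})$) is literally the same. Therefore the rational map $\EuScript{X}\dashrightarrow\cal W_{\Im}^{p}$ factors through $\mu$, and taking the normalisation inside $K(\EuScript{X})=K(\widetilde{\EuScript{X}})$ produces the same $\cal V$, with $\EuScript{X}\dashrightarrow\cal V$ and $\widetilde{\EuScript{X}}\dashrightarrow\cal V$ being birationally equivalent rational maps over $S$. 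The only place I expect any real subtlety is the interplay between saturation and pullback along the possibly non-flat locus of $\pi$, but since that locus has codimension at least two in $\widetilde{\cal Y}$ it contributes nothing to either the reflexive sheaf $\det\cal Q^{\vee\vee}$ or to the associated Weil divisor class, and so can be safely ignored.
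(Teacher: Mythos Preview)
Your proposal is essentially correct and follows the same overall strategy as the paper: reduce to a birational morphism via the normalised graph, then argue that the data defining the rational map to the Grassmannian agrees on a dense open, so the rational maps (and hence $\cal W_{\Im}$, $\cal V$, and the movable linear system) coincide. Your third paragraph is exactly the paper's argument: the maps $g^*\cal T_{T/S}\to\cal Q$ and $\widetilde g^*\cal T_{T/S}\to\widetilde{\cal Q}$ agree on the open locus where $\pi$ is an isomorphism, and a rational map is determined by its restriction to any dense open.

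There is one slip, however. In your final sentence you assert that the non-flat locus of $\pi$ has codimension at least two in $\widetilde{\cal Y}$. This is false in general: for a blow-up, the exceptional divisor has codimension one and $\pi$ is not flat along it (indeed, for a proper birational morphism of normal varieties the flat locus essentially coincides with the isomorphism locus). Consequently the saturation argument of your second paragraph, which invokes \autoref{lem:pullback_saturated_saturated} on ``the big open subset where $\pi$ is flat'', does not go through as stated, and the discrepancy between $\pi^*\cal F'_{\cal Y}$ and $\cal F'_{\widetilde{\cal Y}}$ really can be supported in codimension one. Fortunately this paragraph is unnecessary: once you know the two surjections $g^*\cal T_{T/S}\to\cal Q$ agree on the isomorphism locus (equivalently, at the generic point), the rational maps to $\mathrm{Gr}_T(r,\cal T_{T/S})$ are equal as rational maps, which is all that the statement requires. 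The paper simply restricts to the open set $U\subset\cal Y$ over which $\pi$ is an isomorphism and compares there; you should do the same and drop the flatness/saturation detour.
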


\begin{proof}
	
	The construction is local on the base, so it is clear that the birational equivalence class of $\cal V$ is unaffected by  birational changes in $S$.  	
	Now given any two birational varieties $\EuScript{X}_1$ and $\EuScript{X}_2$ over $S$, we can let $\widetilde{\EuScript{X}}$ be the normalisation of the graph of $\EuScript{X}_1\dashrightarrow\EuScript{X}_2$, to produce a normal variety with birational morphisms to both.  Therefore it is enough to fix a model $\widetilde{\EuScript{X}}$ of $\EuScript{X}$, which is a resolution of indeterminacies of $\EuScript{X}\dashrightarrow \cal V$, and show that applying the construction of \autoref{thm:C} to  $\widetilde{\EuScript{X}}\to S$ results in the same rational map $\widetilde{\EuScript{X}}\dashrightarrow \cal V\to S$ up to birational equivalence of $\cal V$.  It is harmless to shrink $S$ again to assume that $\widetilde{\EuScript{X}}\to S$ (and its normalised base change) is still flat.

	The inclusion of function fields $K(\cal Y)=K(\widetilde{\cal Y})\supset K(\EuScript{X})\supset K(\widetilde{\cal Y})^p$ determines a unique height one purely inseparable morphism  $\widetilde{\phi}\colon\widetilde{\cal Y}\to\widetilde{\EuScript{X}}$ fitting into the diagram below, and the induced map $\beta\colon\tilde{\cal Y}\to\cal W_{\mathrm{Im}}$ is a morphism since $\tilde{\EuScript{X}}$ resolves the indeterminacies of $\EuScript{X}\dashrightarrow{\cal V}$. Denote by $\widetilde{\cal F}$ the foliation corresponding to $\widetilde{\phi}$.

	\[\xymatrixcolsep{1em}\xymatrix{
		\widetilde{\cal Y} \ar[rd]^-\pi \ar@/_/[rdd]_-\beta \ar[rr]^-{/\widetilde{\cal F}} & & \widetilde{\EuScript{X}} \ar[rd]  \ar[rr] & & \widetilde{\cal Y}^{p} \ar[rdd] \\
		& \cal Y \ar[rr]^-{/\cal F} \ar@{-->}[d]^-{\alpha} & & \EuScript{X} \ar@{-->}[rrd] \ar@{-->}[d] & & \\
		& \cal W_{\Im} \ar[rr] \ar[d] & & \cal V \ar[rr] \ar[d] & & \cal W_{\Im}^{p} \ar[d] \\
		& T \ar[rr] & & S \ar[rr] & & T^{p}}
	\]

	Let $\widetilde{\cal F}'$ denote the saturation of $\widetilde{\cal F}$ in $(g\circ\pi)^*\cal T_{T/S}$.  
	Then define $\widetilde{\cal Q}_{T/S}$ as the quotient.  
	Let $U\subset\cal Y$ be the open set on which $\pi|_{\pi^{-1}(U)}\colon\pi^{-1}(U)\to U$ is an isomorphism.
	Then $\widetilde{\cal F}'|_{\pi^{-1}(U)}\cong\cal F'|_U$, and the maps $\pi^*g^*\cal E\to \det\cal{Q_{\widetilde{F}}}$ and $g^*\cal E\to\det\cal Q_{T/S}$ agree on their restriction to $U$.
	So $\alpha\circ\pi\colon \widetilde{\cal Y}\to\cal W_{\Im}$ agrees with $\beta\colon \widetilde{\cal Y}\to \widetilde{\cal W}_{\Im}$ on $\pi^{-1}(U)$ as maps to $\mathrm{Gr}_T(r,{\cal T}_{T/S})$, and so it follows that these morphisms are the same. 
	
\end{proof}

In view of  \autoref{prop:assume_morphisms} we may assume that $\cal Y\to \cal W\to T$  and $\EuScript{X}\to \cal V\to S$ are morphisms.  We denote by $\frak M_{\cal W/\cal V}$ the movable part of the canonical linear system produced by the normalised base change $\cal Y=(\EuScript{X}\times_{\cal V} \cal W)^{\nu}_{\mathrm{red}}$ over a suitable open subset of $\cal W$, or on the generic fibre of $\cal Y\to \cal W$, as constructed in \autoref{sub:construct_M}.  We  require $\cal Y\to \cal W$ to be a morphism in order to make sense of $\frak M_{\cal W/\cal V}$, but  this is not strictly necessary, as we could define $\frak M_{\cal W/\cal V}$ to be the birational transform of that on a higher model.

\begin{lemma}\label{lem:M_W/V_is_0}
 $\frak M_{\cal W/\cal V}=0$ (over the generic point of $\cal W$).
	\end{lemma}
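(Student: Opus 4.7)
The plan is to translate $\frak M_{\cal W/\cal V}=0$ into an equality of field extension degrees, and then verify this using the integral-closure property that defines $\cal V$.

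Applying \autoref{prop:M=0_iff_reduced} to the new setting $\EuScript{X}\to\cal V$ with base change $\cal W\to\cal V$, the vanishing at the generic point of $\cal W$ is equivalent to the reducedness of $K(\EuScript{X})\otimes_{K_V}L_W$, where $K_V:=K(\cal V)$ and $L_W:=K(\cal W)$. By \autoref{lem:normalised_base_change_degree} this is in turn equivalent to the degree equality $[K(\cal Y'):K(\EuScript{X})]=[L_W:K_V]$, where $\cal Y':=(\EuScript{X}\times_{\cal V}\cal W)^{\nu}_{\mathrm{red}}$ is the normalised reduced base change in the new setting. The first observation is that $\cal Y'$ agrees birationally with the original $\cal Y$ of \autoref{sub:set-up}: the natural maps $\cal Y\to\EuScript{X}$ and $\cal Y\to\cal W$ induce a morphism $\cal Y\to\cal Y'$, and at the function field level
\[ K(\cal Y')=K(\EuScript{X})\cdot L_W=K(\EuScript{X})\cdot K_V\cdot L=K(\EuScript{X})\cdot L=K(\cal Y), \]
where I use $K_V\subseteq K(\EuScript{X})$ throughout. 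Hence $[K(\cal Y'):K(\EuScript{X})]=[K(\cal Y):K(\EuScript{X})]$.

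To prove the degree equality, choose a $p$-basis $u_1,\ldots,u_m$ of $L$ over $K$, so that $a_j:=u_j^p\in K$ and the monomials $u^e:=\prod_j u_j^{e_j}$ for $e\in\{0,\ldots,p-1\}^m$ form a $K$-basis of $L$. For any intermediate field $K\subseteq F\subseteq K(\EuScript{X})$, set
\[ \Lambda_F:=\{e\in\mathbb{F}_p^m:u^e\in F\}\subseteq\mathbb{F}_p^m. \]
This is an $\mathbb{F}_p$-subgroup, since $u^e u^{e'}$ and $u^{(e+e')\bmod p}$ differ by a factor in $K\subseteq F$ coming from the relations $u_j^p=a_j$. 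A direct counting argument using generators of $\Lambda_F$ gives $[F[u_1,\ldots,u_m]:F]=p^{m-\dim_{\mathbb{F}_p}\Lambda_F}$, so the target equality reduces to the assertion $\Lambda_{K(\EuScript{X})}=\Lambda_{K_V}$.

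The containment $\Lambda_{K_V}\subseteq\Lambda_{K(\EuScript{X})}$ is immediate. For the reverse, suppose $u^e\in K(\EuScript{X})$; then $(u^e)^p=\prod_j a_j^{e_j}$ lies in $K\subseteq K_V$, so $u^e$ satisfies the monic integral equation $x^p-(u^e)^p=0$ over $K_V$. The key input is that $K_V$ is integrally closed in $K(\EuScript{X})$, which holds by the very definition of $\cal V$ as the normalisation of $\cal W_{\Im}^p$ inside $K(\EuScript{X})$ in \autoref{sub:pw}. Hence $u^e\in K_V$ and $e\in\Lambda_{K_V}$. The main obstacle is really just the bookkeeping in the first two paragraphs; once the problem is reduced to comparing the groups $\Lambda_F$, the defining integral-closure property of $\cal V$ supplies the descent of $p$-th roots cleanly.
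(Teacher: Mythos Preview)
There is a genuine gap: the degree formula $[F\cdot L:F]=p^{m-\dim_{\mathbb F_p}\Lambda_F}$ is false, and with it the reduction to $\Lambda_{K(\EuScript X)}=\Lambda_{K_V}$. The group $\Lambda_F$ only records which \emph{monomials} $u^e$ lie in $F$, but $p$-dependence of the $u_j$ over $F$ can also arise from non-monomial relations. Concretely, take the paper's \autoref{exmp:pm_fermat_example} with $n=m=1$: there $K=k(s_0,s_1)$, $u_i=s_i^{1/p}$, and on the chart $x_1=1$ one has $K(\EuScript X)=k(s_0,x_0,x_2)$ with the identity $u_0 x_0+u_1=-x_2\in K(\EuScript X)$. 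No non-trivial monomial $u_0^{e_0}u_1^{e_1}$ lies in $K(\EuScript X)$, so $\Lambda_{K(\EuScript X)}=0$ and your formula predicts $[K(\cal Y):K(\EuScript X)]=p^2$; but the displayed relation gives $u_1\in K(\EuScript X)[u_0]$, hence $[K(\cal Y):K(\EuScript X)]=p$.

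More tellingly, the only property of $K_V$ your argument invokes is that it is integrally closed in $K(\EuScript X)$. By the standing hypothesis $f_*\cal O_{\EuScript X}=\cal O_S$ and \autoref{lem:stein_characterisation}, the field $K$ itself is integrally closed in $K(\EuScript X)$; so the same reasoning applied verbatim with $K_V=K$ and $L_W=L$ would ``prove'' that $\EuScript X\times_S T$ is always generically reduced, contradicting the very examples of \autoref{sec:examples}. Thus integral closedness alone cannot force the linear disjointness you need; one must use something specific to the construction of $\cal V$ and $\cal W$. The paper does this sheaf-theoretically: since $\cal W$ factors through $\cal W_{\Im}$, the line bundle $\det\cal Q_{T/S}$ is (numerically, hence actually) trivial on the generic fibre of $\cal Y\to\cal W$, and then a short diagram chase with the comparison of $\cal Q_{\cal W/\cal V}$ and $\cal Q_{T/S}$ forces $\det\cal Q_{\cal W/\cal V}$ to be trivial there as well.
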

\begin{proof}
	Denote 
	\[\xymatrix{
	\cal Y\ar_{g_1}[r]\ar@/^1pc/[rr]^-g & \cal W\ar_{g_2}[r] & T
}
	\]
	
	By \autoref{prop:assume_morphisms} we may assume that $\cal Y\to \cal W\to T$ are morphisms.  We are then  free to shrink $\cal V$ and $\cal W$ as much as necessary. In particular we may assume that $\cal T_{\cal W/\cal V}\hookrightarrow g_2^*\cal T_{T/S}$ is saturated and that $\cal Y\to\cal W$ is flat. Then $\cal F$ has the same saturation in $g_1^*\cal T_{\cal W/\cal V}$ and in $g^*\cal T_{T/S}$, so we have the following exact diagram of sheaves on $\cal Y$.
	\[\xymatrix{
		& & &0\ar[d] &\\
0\ar[r] &\cal F'_{\cal W/\cal V}\ar[r] \ar@{=}[d] & g_1^*\cal T_{\cal W/\cal V} \ar[r]\ar[d] & \cal Q_{\cal W/\cal V}\ar[r]\ar[d] &0\\
	0 \ar[r] & \cal F'_{T/S}\ar[r] & g^*\cal T_{T/S}\ar[r] & \cal Q_{T/S} \ar[r]\ar[d] & 0\\
	& & & \widetilde{Q}\ar[d] & \\
	& & &0&
}	
	\]
	
 We know that $\det\cal Q_{T/S}$ is trivial on the generic fibre over $\cal W_{\Im}$, and so from the factorization $\cal Y\to \cal W\to \cal W_{\Im}^{p}$ we know that $(\det\cal Q_{T/S})^{\otimes p}$ is trivial on the generic fibre $Y_{\cal W}$ of $\cal Y\to \cal W$.
But then $\det\cal Q_{T/S}|_Y$ is a globally generated and numerically trivial $\bb Q$-Cartier divisor, and hence trivial as $Y_{\cal W}\to\Spec K(\cal W)$ is projective.

Therefore on a big open subset $U'$ of $Y_{\cal W}$, we have two globally generated vector bundles $\cal Q_{\cal W/\cal V}|_{U'}$ and $\widetilde{\cal Q}|_{U'}$ with 
$\det\cal Q_{\cal W/\cal V}|_{U'}\otimes\det\tilde{\cal Q}|_{U'}\cong\cal O_{U'}$.
Therefore $\det\cal Q_{\cal W/\cal V}|_{U'}\cong\det\widetilde{\cal Q}|_{U'}\cong\cal O_{U'}$.

\end{proof}

\begin{proposition}[\autoref{main_theorem}\autoref{fibre_reduced}]\label{prop:base_change_w_reduced}
 Let $\xi$ be the generic point of $\cal V$.  Then ${\EuScript{X}_{\xi}\otimes_{K(\cal V)}K(\cal W)}$ is reduced.
\end{proposition}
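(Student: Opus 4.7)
The strategy is to reapply the machinery developed so far, but with the base morphism $f\colon\mathcal X\to S$ replaced by $\mathcal X\to\mathcal V$ and the height one base change $T\to S$ replaced by $\mathcal W\to\mathcal V$. In this new setup, Lemma~6.2 supplies the vanishing $\frak M_{\mathcal W/\mathcal V}=0$, and Proposition~5.6 (\autoref{main_theorem} \autoref{M_reducedness}, proven as \autoref{prop:M=0_iff_reduced}) converts this into reducedness of the base change. Specializing to the generic point of $\mathcal V$ will give the claim.

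First I would verify that the pair $(\mathcal X\to\mathcal V,\ \mathcal W\to\mathcal V)$ satisfies the hypotheses of \autoref{sub:set-up} that underlie the definition of $\frak M$. The variety $\mathcal V$ is normal by construction (it is defined in \autoref{defn:w_and_v} as a normalisation of $\mathcal W_{\mathrm{Im}}^p$ inside $K(\mathcal X)$), the morphism $\mathcal X\to\mathcal V$ is projective with $(\mathcal X\to\mathcal V)_\ast\mathcal O_{\mathcal X}=\mathcal O_{\mathcal V}$ because it is a Stein factorisation (using \autoref{lem:stein_characterisation} to justify this after resolving indeterminacies via \autoref{prop:assume_morphisms}), and $\mathcal W\to\mathcal V$ is purely inseparable of height one because $T\to S$ is height one and $\mathcal W$ was constructed from the base change $\mathcal V\times_S T$. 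Crucially, $\mathcal Y$ is the normalisation of $(\mathcal X\times_\mathcal V\mathcal W)_{\red}$; this identification is already implicit in the phrasing of $\frak M_{\mathcal W/\mathcal V}$ in the paragraph preceding \autoref{lem:M_W/V_is_0}, and follows because the natural map $\mathcal Y\to\mathcal X\times_\mathcal V\mathcal W$ factors the universal normalisation of $(\mathcal X\times_S T)_{\red}$ through $\mathcal W\to\mathcal V\times_S T$.

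Once the framework applies, \autoref{lem:M_W/V_is_0} gives $\frak M_{\mathcal W/\mathcal V}=0$ on a dense open subset of $\mathcal V$. Applying \autoref{prop:M=0_iff_reduced} in the reapplied setting (after possibly shrinking $\mathcal V$) yields that $\mathcal X\times_\mathcal V\mathcal W$ is reduced over that open subset. In particular, reducedness holds at the generic point of $\mathcal V$. Since the generic fibre of the purely inseparable morphism $\mathcal W\to\mathcal V$ is $\Spec K(\mathcal W)$ (the generic stalk $K(\mathcal V)\otimes_{K(S)}K(T)$ is an Artinian local ring with residue field $K(\mathcal W)$, and normalisation of the reduced quotient does not change this), the generic fibre of $\mathcal X\times_\mathcal V\mathcal W\to\mathcal V$ is exactly $X_\xi\otimes_{K(\mathcal V)}K(\mathcal W)$, which is therefore reduced.

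The main technical obstacle is the bookkeeping in the first step: one must be confident that the construction of $\frak M$ in \autoref{sec:section_M}, and in particular the identification of the normalised base change, is robust under replacing $(f,S,T)$ by $(\mathcal X\to\mathcal V,\mathcal V,\mathcal W)$. Once this compatibility is clearly laid out, combining \autoref{lem:M_W/V_is_0} with \autoref{prop:M=0_iff_reduced} gives the conclusion essentially formally.
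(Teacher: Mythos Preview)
Your proposal is correct and follows exactly the paper's approach: the paper's proof is simply ``This follows immediately from \autoref{lem:M_W/V_is_0} and \autoref{prop:M=0_iff_reduced}.'' The verification you outline in the first step---that the pair $(\mathcal X\to\mathcal V,\,\mathcal W\to\mathcal V)$ fits the framework of \autoref{sub:set-up}---is precisely what the paper handles in \autoref{prop:assume_morphisms} and the paragraph preceding \autoref{lem:M_W/V_is_0}, so your elaboration is just making explicit what the paper takes as already established.
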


\begin{proof}
	This follows immediately from \autoref{lem:M_W/V_is_0} and \autoref{prop:M=0_iff_reduced}.
\end{proof}

\begin{remark}
 $\EuScript{X}_\xi$ need not be geometrically reduced; see \autoref{exmp:X_K(V)_not_geom_reduced}.
\end{remark}

Next we prove points \autoref{maximal_reduced} and \autoref{pullback_from_w} of \autoref{main_theorem}.

\begin{proposition}[\autoref{main_theorem} \autoref{maximal_reduced}, \autoref{pullback_from_w}]\label{new_univ_prop}
	Let $\cal U\to S$ be a variety with a projective morphism $\alpha\colon \EuScript{X}\to\cal U$
	such that $\EuScript{X}\times_\cal U \Spec (K((\cal U\times_S T)_{\red}))$ is reduced, where $K((\cal U\times_S T)_{\red})$ is the function field of the variety $(\cal U\times_S T)_{\red}$. 
	Then there is a  rational map $(\cal U\times_S T)_{\red}\dashrightarrow\cal W_{\mathrm{Im}}$ factoring the natural map from $\cal Y$. 
	
	Furthermore, if $\alpha_*\cal O_{\EuScript{X}}=\cal O_\cal U$, then there is a rational map $\cal U\dashrightarrow\cal V$ factoring $\EuScript{X}\dashrightarrow\cal V$.
	
\end{proposition}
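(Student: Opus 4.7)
The plan is to construct the rational map $(\cal U\times_S T)_{\red}\dashrightarrow\cal W_{\Im}$ by tracing the foliation $\cal F$ at the generic point of $R:=(\cal U\times_S T)_{\red}$, then deduce the factorisation through $\cal V$ from the Stein property of $\alpha$. By \autoref{prop:assume_morphisms} we may assume, after suitable shrinking of $S$ and $\cal U$ and passing to a resolution, that all relevant rational maps are morphisms. Using $\EuScript{X}\times_S T=\EuScript{X}\times_\cal U(\cal U\times_S T)$ and that $R\hookrightarrow\cal U\times_S T$ is a universal homeomorphism, we see $\cal Z=(\EuScript{X}\times_\cal U R)_{\red}$, inducing a natural morphism $\cal Y\to R$; by hypothesis the generic fibre of $\cal Z\to R$ is already reduced and equals $\EuScript{X}\times_\cal U\Spec K(R)$.

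To obtain the factorisation $\cal Y\dashrightarrow R\dashrightarrow\cal W_{\Im}$, it suffices by \autoref{lem:plucker} to show that $\cal F'\subset g^*\cal T_{T/S}$ is generically pulled back from $R$. At the generic point $\eta$ of $R$, the height one morphism $\cal Y_\eta\to\EuScript{X}_{K(\cal U)}$ is the base change of $\Spec K(R)\to\Spec K(\cal U)$ along $\EuScript{X}\to\cal U$, so the base change formula for K\"ahler differentials identifies its foliation with $K(Y)\otimes_{K(R)}\cal T_{K(R)/K(\cal U)}$. A direct computation with the cotangent sequence shows that the embedding $\cal F\hookrightarrow g^*\cal T_{T/S}$ of \autoref{sub:pw}, restricted to $\eta$, arises from the natural map of $K(R)$-modules $\cal T_{K(R)/K(\cal U)}\to K(R)\otimes_L\cal T_{L/K}$ sending a derivation $d\colon K(R)\to K(R)$ vanishing on $K(\cal U)$ to its restriction $d|_L$, which in turn vanishes on $K$. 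Hence $\cal F|_\eta$ is pulled back from $R$, and by \autoref{lem:pullback_saturated_saturated} so is its saturation $\cal F'|_\eta$. Therefore $\cal Q_{T/S}$ is generically pulled back from $R$, giving the desired factorisation $R\dashrightarrow\cal W_{\Im}$.

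For the furthermore, assume $\alpha_*\cal O_{\EuScript{X}}=\cal O_\cal U$. Taking $p$\textsuperscript{th} powers of $R\dashrightarrow\cal W_{\Im}$ yields $R^p\dashrightarrow\cal W_{\Im}^p$, whence $K(\cal W_{\Im}^p)\subseteq K(R)^p$. Since $R\to\cal U$ is of height one (being the reduction of a base change of the height one morphism $T\to S$), we have $K(R)^p\subseteq K(\cal U)$, hence $K(\cal W_{\Im}^p)\subseteq K(\cal U)$. By construction, $K(\cal V)$ is the integral closure of $K(\cal W_{\Im}^p)$ in $K(\EuScript{X})$, so each such element is integral over $K(\cal U)$. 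By \autoref{lem:stein_characterisation} the Stein hypothesis gives that $K(\cal U)$ is integrally closed in $K(\EuScript{X})$, so $K(\cal V)\subseteq K(\cal U)$ and we obtain the rational map $\cal U\dashrightarrow\cal V$. The main obstacle is the compatibility verification in the previous paragraph between the relative tangent sequences coming from the commutative square of field extensions and the general construction of $\cal F\hookrightarrow g^*\cal T_{T/S}$ in \autoref{sub:pw}; once this is in place the rest of the argument is essentially formal.
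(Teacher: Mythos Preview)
Your argument is correct and reaches the same conclusion as the paper, but it is the dual of the route the authors take, and the detour costs you exactly the compatibility check you flag as ``the main obstacle''.

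The paper works on the cotangent side. Writing $R=(\cal U\times_S T)_{\red}$, the reducedness hypothesis implies that $\cal Z\cong\EuScript{X}\times_{\cal U}R$ is Cartesian, whence the base-change identity $\alpha_r^*\Omega_{R/\cal U}\cong\Omega_{\cal Z/\EuScript{X}}$ is immediate. Pulling back the conormal sequence $I_{\cal U}/I_{\cal U}^2\to h_{\cal U}^*\Omega_{T/S}\to\Omega_{R/\cal U}\to 0$ along $\alpha_r$ then \emph{is} the surjection $h^*\Omega_{T/S}\to\Omega_{\cal Z/\EuScript{X}}\to 0$ whose top wedge defines $\cal Y\dashrightarrow\cal W_{\Im}$ (by \autoref{lem:plucker}). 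So the factorisation through $R$ drops out with no further verification: compatibility is just functoriality of the conormal sequence under pullback.

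You instead dualise and argue that the foliation $\cal F\hookrightarrow g^*\cal T_{T/S}$ is, at the generic point, spanned by the image of $\cal T_{K(R)/K(\cal U)}$. This is true and amounts to the same identity, but now the compatibility between the construction of \autoref{sub:pw} (via $\cal T_{\cal Y/k}\to g^*\cal T_{T/k}$) and the restriction map from $\cal T_{K(R)/K(\cal U)}$ must be checked by hand. A derivation of $K(\cal Y)$ vanishing on $K(\EuScript{X})$ need not a priori preserve $K(R)$, so one really is forced back to the Cartesian identity for $\Omega$ and then dualises---which is what the paper does directly. Your ``furthermore'' via $p$\textsuperscript{th} powers and \autoref{lem:stein_characterisation} is the same as the paper's.

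In short: both arguments are correct; the paper's cotangent-side version makes the key step a one-line functoriality, while your tangent-side version leaves precisely that step as an acknowledged exercise.
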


\begin{proof}
Since we only require rational maps, we will shrink $\cal U$ as needed throughout the proof.
We have a diagram 

\[\xymatrix{
	\cal Z\ar[r]^-{i'}\ar[rd] \ar@/_2pc/[rrdd]_-h &\EuScript{X}\times_\cal U  ((\cal U\times_S T)_{\red}) \ar[r]^-i \ar[d]^-{\alpha_r} & \EuScript{X}\times_S T \ar[d]^-{\alpha_T} \ar[r] & \EuScript{X} \ar[d]^-\alpha \\
	&(\cal U\times_S T)_{\red} \ar[r]^-{i_\cal U}\ar[rd]^-{h_\cal U} & \cal U\times_S T \ar[r]\ar[d] & \cal U\ar[d] \\
& & T\ar[r] & S.}
\]

As $i_{\cal U}$ is a closed immersion, so is $i$, but then by assumption $\EuScript{X}\times_\cal U  ((\cal U\times_S T)_{\red}) $ is reduced, and so $i'$ is an isomorphism.

The conormal sequence on $\cal U$ gives 
\[
\xymatrix{
I_\cal U/I_{\cal U}^2 \ar[r] & h_\cal U^*\Omega_{T/S} \ar[r] & \Omega_{(\cal U\times_S T)_{\red}/\cal U} \ar[r] & 0}.
\]

But we have $\alpha_r^*\Omega_{(\cal U\times_S T)_{\red}/\cal U} \cong\Omega_{\cal Z/\EuScript X}$, and so the pullback of the right-hand map is exactly 
\[\xymatrix{
h^*\Omega_{T/S}\ar[r] & \Omega_{\cal Z/\EuScript X} \ar[r] & 0},
\]
whose top wedge defines the map $\cal Y\dashrightarrow\cal W_{\Im}$ on the open set on which $\cal Z\cong\cal Y$ and $\cal Y$ is smooth, by \autoref{lem:plucker}.
Therefore $\alpha_r$ factors into the map $\cal Z\dashrightarrow \cal W$.

Similarly, taking top wedge of this map commutes with pullback, so we obtain equality of linear systems $\alpha_r^*\frak M_{\cal U}=\frak M$.

This gives a factorisation $\EuScript{X} \dashrightarrow (\cal U\times_S T)_{\red}^p\dashrightarrow\cal W_{\mathrm{Im}}^p$, so we get a rational map between the normalisations of $(\cal U\times_S T)_{\red}^p$ and $\cal W_{\mathrm{Im}}^p$ in $K(\EuScript{X})$. The latter is $\cal V$ by definition, so if moreover $\alpha_*\cal O_{\EuScript{X}}=\cal O_\cal U$ we obtain $\EuScript{X}\to\cal U\dashrightarrow\cal V$.
\end{proof}

\section{Fano varieties}\label{sec:fano_section}

In this section we prove some consequences of our main results to Fano varieties over imperfect fields. We first prove \autoref{thm:MFS}, whose statement we recall below:

\begin{theorem}[\autoref{thm:MFS}]
	Let $K$ be the function field of a variety over a perfect field of characteristic $p>0$. Let $X$ be a variety over $K$, and assume the following:
	\begin{itemise}
		\item $X$ is normal, $\mathbb{Q}$-factorial and $\rho(X)=1$,
		\item $X$ is geometrically irreducible over $K$,
		\item there is an effective divisor $B$ such that $K_X+B\equiv 0$, and
		\item if $n=\dim(X)$ then $p>2n+1$.
	\end{itemise}
	
	Then there is a birational morphism $\phi\colon\tilde{X}\to X$ and a contraction $\widetilde{X}\to V$ of relative dimension at least $1$, such that $(\widetilde{X}\otimes_K{K^{1/p}})_{\red}$ is birational to $\widetilde{X}\times_V(V\otimes_K{K}^{1/p})_{\red}$.
\end{theorem}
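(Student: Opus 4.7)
The strategy is to apply \autoref{main_theorem} with $L = K^{1/p}$ and use the rational map $f \colon X \dashrightarrow V$ it produces as the candidate for the desired contraction. Concretely, \autoref{main_theorem} yields the normalisation $\phi \colon Y \to X$ of $(X \otimes_K K^{1/p})_{\mathrm{red}}$, the canonical linear system $\mathfrak{C} = \mathfrak{F} + \mathfrak{M}$ on $Y$ satisfying $K_Y + (p-1)\mathfrak{C} \sim \phi^* K_X$, and the map $f$ as the Stein factorisation of the rational map induced by $\mathfrak{M}$. Let $\psi \colon \tilde X \to X$ be a birational morphism resolving the indeterminacy of $f$, giving a morphism $\tilde f \colon \tilde X \to V$ which we may assume is a contraction (by replacing $V$ by its Stein factorisation if necessary).

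The birational identification
\[
(\tilde X \otimes_K K^{1/p})_{\mathrm{red}} \sim_{\mathrm{bir}} \tilde X \times_V (V \otimes_K K^{1/p})_{\mathrm{red}}
\]
then follows from parts \autoref{fibre_reduced} and \autoref{pullback_from_w} of \autoref{main_theorem}, applied to $\tilde X$. Writing $W := (V \otimes_K K^{1/p})^\nu_{\mathrm{red}}$ and $\tilde Y := (\tilde X \otimes_K K^{1/p})^\nu_{\mathrm{red}}$, part \autoref{pullback_from_w} produces a natural map $\tilde g \colon \tilde Y \dashrightarrow W$ with $\mathfrak{M} = \tilde g^* \mathfrak{M}_W$, while part \autoref{fibre_reduced} shows that the generic fibre of $\tilde f$ becomes reduced after base change to $K(W)$. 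Together these force $\tilde Y$ to be birational to $\tilde X \times_V W$, and since $W$ is birational to $(V \otimes_K K^{1/p})_{\mathrm{red}}$, the claim follows.

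It remains to prove that $\tilde f$ has relative dimension at least $1$, i.e.\ that $\dim V \leq n - 1$. If $X$ is geometrically reduced, then $\mathfrak{M} = 0$ by \autoref{main_theorem}~\autoref{M_reducedness} together with \autoref{lem:geometric_single_base_change}, and we may take $V = \Spec K$ and $\tilde X = X$, giving relative dimension $n \geq 1$. So we may assume $X$ is geometrically non-reduced. We argue by contradiction: if $\dim V = n$, then since $V$ is defined via Stein factorisation the map $f$ must be birational, and consequently $\mathfrak{M}$ is a big linear system on $Y$.

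The key obstacle, and where the hypothesis $p > 2n+1$ enters, is to derive a contradiction from this bigness. Rearranging the canonical bundle formula gives
\[
-K_Y \equiv \phi^* B + (p-1)(\mathfrak{F} + \mathfrak{M}),
\]
and since $\rho(X) = 1$ and $-K_X \equiv B$ is effective, $-K_X$ is proportional to the ample generator $H$ of $\mathrm{Cl}(X) \otimes \mathbb{Q}$. Intersecting the formula with $M^{n-1}$ for a general $M \in |\mathfrak{M}|$, and using that $M$ is big while $\mathfrak{F}$ and $\phi^* B$ are effective, yields $(-K_Y) \cdot M^{n-1} \geq (p-1) M^n > 0$, with $M^n \geq 1$ by integrality of the Weil divisor $M$. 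Comparing this to an upper bound on $(-K_Y) \cdot M^{n-1}$ coming from the facts that $\phi$ is finite purely inseparable of $p$-power degree and $X$ satisfies Fano-type numerical boundedness (from $\rho(X) = 1$ and $-K_X \equiv B$), one expects to extract an inequality of the form $p - 1 \leq 2n$, contradicting the hypothesis. This final numerical step, which I expect to proceed in the spirit of the del Pezzo classification of \cite[Theorem~4.1]{pw}, is the most technically delicate part of the plan.
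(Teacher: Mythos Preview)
Your overall architecture is right: apply \autoref{main_theorem}, take the induced rational map to $V$, and rule out the possibility that it is generically finite. The deduction of the birational identification from part~\autoref{fibre_reduced} is also fine. The gap is entirely in your final numerical step, and it is a real one.

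Your plan is to intersect the relation $-K_Y \equiv \phi^*B + (p-1)(\mathfrak{F}+\mathfrak{M})$ with $M^{n-1}$ and then bound $(-K_Y)\cdot M^{n-1}$ from above by something like $2n$. No such bound is available: there is no general ``Fano-type numerical boundedness'' in positive characteristic that controls top self-intersections or anticanonical volumes in terms of $n$ alone, and \cite[Theorem~4.1]{pw} is a surface classification, not a source of such inequalities. Without that upper bound the argument does not close.

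The paper's mechanism for producing the bound $2n$ is Mori's bend-and-break \cite[II.5.8]{kollar_rational_1996}, and this is the idea you are missing. One first passes from $Y^{1/p}$ all the way to $Y = (X\otimes_K\overline{K})_{\red}^{\nu}$, over an algebraically closed field, carrying along the pullbacks $\mathfrak{F}_Y,\mathfrak{M}_Y,B_Y$ together with the additional divisor $C$ arising from the further base changes (via \cite[Theorem~1.1]{pw}); one uses $\rho(Y)=1$ and $\mathbb{Q}$-factoriality (inherited by \cite{tanaka_behavior_2016}) to know these are all nef. Bend-and-break then produces, through a chosen general point $P$, a rational curve $\Gamma$ with $-K_Y\cdot\Gamma\le 2n$. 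The hypothesis that the map given by $\mathfrak{M}$ is generically finite is exactly what lets you pick a member of $\mathfrak{M}_Y$ through $P$ that does not contain $\Gamma$, forcing $\mathfrak{M}_Y\cdot\Gamma\ge 1$. Combining,
\[
p-1 \le (p-1)\,\mathfrak{M}_Y\cdot\Gamma \le \bigl((p-1)(\mathfrak{F}_Y+\mathfrak{M}_Y+C_Y)+B_Y\bigr)\cdot\Gamma = -K_Y\cdot\Gamma \le 2n,
\]
contradicting $p>2n+1$. So the curve you should be intersecting against is a bend-and-break rational curve, not a complete-intersection curve cut out by $\mathfrak{M}$ itself.
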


\begin{proof}[Proof of \autoref{thm:MFS}]
Let $Y^{1/p}=(X\otimes_KK^{1/p})_{\red}^{\nu}$ and $\phi_1$ be the map $Y^{1/p}\to X$.  By \cite[Proposition 2.4]{tanaka_behavior_2016}, $\rho(Y^{1/p})=\rho(X)=1$, and by \cite[Lemma 2.5]{tanaka_behavior_2016} $Y^{1/p}$ is $\mathbb{Q}$-factorial.
	By \autoref{main_theorem} we obtain a formula 
	\[\phi_1^*(K_X+B)\sim K_{Y^{1/p}}+(p-1)(\frak{F}+\frak{M})+\phi_1^*B\equiv0
	\]  
	where $\mathrm{Bs}(\frak{M})$ has codimension at most $2$.  In our situation $\frak{F}$, $\frak{M}$  are effective Weil divisors and $\phi_1^*B$ an effective $\mathbb{Q}$-divisor on a $\mathbb{Q}$-factorial variety of Picard rank $1$, so they are each nef $\mathbb{Q}$-Cartier divisors. 
	Let $g\colon Y^{1/p}\dashrightarrow W$ be the rational map from \autoref{main_theorem}, which matches that given by the linear system $\frak{M}$ up to a finite morphism.  Denote the rational map induced by $\frak M$ as $g_{\frak M}$. 
	
	Suppose that $g_{\frak M}$ is generically finite, and we aim to find a contradiction.   The key is that if we fix a point $P$ of $Y^{1/p}$ where $g_{\frak M}$ is defined as a morphism and is finite, and a curve $\Gamma$ through $P$, this assumption allows us to choose a representative of $\frak M$ which passes through $P$ but does not contain $\Gamma$.
	
Let $Y^{\perf}=(Y^{1/p}\otimes_{K^{1/p}}K^{1/p^{\infty}})^{\nu}_{\red}$ and let $\phi_\infty$ be the map $Y^{\perf}\to Y^{1/p}$.  By \cite[Theorem 1.1]{pw} we can fix  an effective Weil divisor $C$ on $Y^{\perf}$ such that
\[\phi_{\infty}^*\phi_1^*(K_X+B)=K_{Y^{\perf}}+(p-1)(\phi_{\infty}^*(\frak F+\frak M)+C)+\phi_{\infty}^*\phi_1^*B.
\]
	As before $Y^{\perf}$ is $\mathbb{Q}$-factorial and $\rho(Y^{\perf})=1$, and so $C$ is nef.
	
	Finally let $Y=Y^{\perf}\otimes_{K^{1/p^\infty}}\overline{K}$, which is an irreducible normal variety  because $X$ was geometrically irreducible, and let $\overline{\phi}$ be the map $Y\to Y^{\perf}$.   We have $K_{Y}=\overline{\phi}^*K_{Y^{\perf}}$, and so 
	\begin{gather*}\overline{\phi}^*(K_{Y^{\perf}}+(p-1)(\phi_{\infty}^*(\frak{F}+\frak{M})+C)+\overline{\phi}^*\phi_\infty^*\phi_{1}^*B) \\
	=K_Y+(p-1)((\frak{F}_Y+\frak{M}_Y)+C_Y)+B_Y\equiv 0
	\end{gather*}
where $\frak{F}_Y$, $\frak{M}_Y$, $B_Y$ and $C_Y$ are nef $\mathbb{Q}$-Cartier Weil divisors obtained by pulling back the corresponding divisors from $Y^{\perf}$.  
	
	Cut $Y$ by general hyperplanes to obtain a smooth curve $D$ contained within the smooth locus of $Y$.  Fix a point $P$ of $Y$ which lies on $D$ but is not contained in the support of $\frak{F}_Y$, $B_Y$ or $C_Y$ (we can do that by generality of the hyperplanes).  Furthermore, we can choose $P$ to be over the locus on which the map $g_{\frak M}$ is defined as a morphism and is finite.  
	
	Now apply bend and break \cite[II.5.8]{kollar_rational_1996} to the nef $\mathbb{Q}$-Cartier divisor $-K_Y$ and the curve $D$ to obtain a curve $\Gamma$ through the point $P$ with $-K_Y\cdot\Gamma\leq 2n$.  Then 
	\[(p-1)\frak{M}_Y\cdot\Gamma\leq ((p-1)(\frak{F}_Y+\frak{M}_Y+C_Y)+B_Y)\cdot\Gamma=-K_Y\cdot \Gamma\leq 2n.
	\]
	
	But $P$ is chosen in such a way that we could have chosen an element $M_P\in \frak{M}$ through $P$ which does not contain the curve $\Gamma$.
	This implies that $\frak{M}_Y\cdot \Gamma$ is greater than a positive integer, and in particular
	 $\frak{M}_Y\cdot\Gamma\geq 1$.  The inequality from bend and break now gives $p-1\leq 2n$, that is $p\leq 2n+1$.
	
	It follows that if $p>2n+1$, the rational map $Y^{1/p}\dashrightarrow W$ is not birational, and \autoref{main_theorem} gives the required property.
	
\end{proof}

\begin{remark}
Note that \autoref{thm:MFS} applies equally to the pair obtained by base change to $K^{1/p}$, that is $(Y^{1/p},(p-1)(\frak{F}+\frak{M})+\phi_{1}^*B)$.   So in fact we inductively obtain a sequence of contractions as in \autoref{thm:MFS} at each Frobenius base change as we continue towards $K^{1/p^{\infty}}$.
\end{remark}

In the next corollaries we may reduce to the case where the ground field is a function field using the process outlined in \autoref{arbitrary_fields}, and so apply \autoref{main_theorem}.

\begin{proof}[Proof of \autoref{cor:dp_hirzebruch}]
As we assume $X$ is geometrically non-reduced, we deduce from \autoref{main_theorem} and \autoref{arbitrary_fields} that the linear system $|C|$ has nonzero movable part. But the exceptional section $D$ on the Hirzebruch surface is fixed so this cannot occur.
\end{proof}

\begin{proof}[Proof of \autoref{regular_dp}]
If $p=3$ then $(Y,C)=(\bb P^2,L)$ or $(S_d,F)$. In both of these situations, any linear system of $\dim\geq 1$ inside $|C|$ has no fixed part.

\end{proof}

\begin{proof}[Proof of \autoref{cor:dp_product}]
Since we assume that $(Y,(p-1)C)$ is of the type $(\bb P^1\times\bb P^1,F)$ (see \cite[Theorem 4.1]{pw}), $C$ is a prime divisor and
$X\otimes_K K^{1/p}$ is non-reduced, and we have $Y\cong (X\otimes_K K^{1/p})_{\red}\otimes_{K^{1/p}} \overline{K}$. That is, $(X\otimes_K K^{1/p})_{\mathrm{red}}$ is geometrically normal.
The linear system $\frak M_{K^{1/p}/K}$ is 1-dimensional so $V$ is a curve, and by universal properties $X\times_V K((V\otimes_K K^{1/p})_{\red})$ is isomorphic to $(X\otimes_K K^{1/p})_{\red}$.
So over the generic point of $V$
\[ Y \cong (X\times_V(V\otimes_K K^{1/p})_{\red}^\nu)\otimes_{K^{1/p}} \overline{K} \cong
X\times_V(V\otimes_K \overline{K})_{\red}^\nu
\]
where the second isomorphism holds because $X\times_V K((V\otimes_K K^{1/p})_{\red})$ is geometrically normal over the generic point of $V$.
\end{proof}

\section{Examples}\label{sec:examples}

In this section $k$ will always denote a perfect field of characteristic $p>0$.

Our first example is the archetypal example of a geometrically non-reduced variety.  We compute our linear system explicitly in this case.

\begin{example}\label{exmp:pm_fermat_example}
Let $S=\bb A^{n+1}_k$ with coordinates $s_0,\ldots,s_n$, and let
\[\EuScript{X}=\left(\left(\sum_{i=0}^{n} s_i x_i^{p^m}\right)+x_{n+1}^{p^m}=0\right)\subset\bb P^{n+1}_S.
\]
Let $\tau\colon T\to S$ be relative Frobenius over $k$, so that $\Omega_{T/S}=\Omega_{T/k}$.
Then $(\EuScript{X}\times_S T)_{\red}$ is normal, so $\frak F_{T/S}=0$ and
\[\cal Y=\left(\left(\sum_{i=0}^{n} s_i^{1/p} x_i^{p^{m-1}}\right)+x_{n+1}^{p^{m-1}}=0\right)\subset\bb P^{n+1}_T.
\]

We compute $\frak M_{T/S}$ explicitly on the affine cover $\{(x_j\neq 0) \mid 0\leq j\leq n\}$ of $\cal Y$.
We have
\[ \Omega_{\cal Y/\cal X}(x_j\neq 0)=\Omega_{k[\{s_l^{1/p} \mid l\neq j\},x_0,\ldots,x_{n+1}] / k[\{s_l \mid l\neq j\},x_0,\ldots,x_{n+1}]}
=\langle d_{\cal Y/\cal X} s_l^{1/p} \mid l\neq j\rangle
\]
because $d_{\cal Y/\cal X}s_j^{1/p} = -\sum_{l\neq j} (\tfrac{x_l}{x_j})^{p^{m-1}} d_{\cal Y/\cal X} s_l^{1/p}$ on $(x_j\neq 0)$.

The differentials $\{ d_{T/S} s_i^{1/p} \mid 0\leq i\leq n\}$ form a basis of $\Omega_{T/S}$,
and for each $0\leq i\leq n$, the image of $d_{T/S}s_i^{1/p}\otimes 1$ in $H^0(\mathcal{Y},\Omega_{\mathcal{Y}/\mathcal{X}})$ in terms of the bases on the affine charts described above,  is given by
%$H^0(\cal Y,\Omega_{\cal Y/\cal X})$ is the global section given by
 %locally given in the above defined bases of $\Omega_{\mathcal{Y}/\mathcal{X}}(x_i\neq 0)$ by
\[\begin{cases}
d_{\cal Y/\cal X} s_i^{1/p} & \text{on }(x_j\neq 0) \text{ for all }j\neq i, \\
-\sum_{l\neq i}(\tfrac{x_l}{x_i})^{p^{m-1}} d_{\cal Y/\cal X}s_l^{1/p} & \text{on }(x_i\neq 0).
\end{cases}\]
So
\[ \bigwedge_{k\neq i} d_{T/S}s_k^{1/p} \mapsto \begin{cases}
-(\tfrac{x_i}{x_j})^{p^{m-1}} \bigwedge_{l\neq j} d_{\cal Y/\cal X}s_l^{1/p} & \text{on }(x_j\neq 0) \text{ for all }j\neq i, \\
\bigwedge_{k\neq i} d_{\cal Y/\cal X}s_k^{1/p} & \text{on }(x_i\neq 0).
\end{cases}
\]
and therefore
$\det\Omega_{\cal Y/\cal X}\cong\cal O_{\bb P^n_T}(p^{m-1})|_\cal Y$ and
\[ \frak M_{T/S} = \langle x_i^{p^{m-1}} \mid 0\leq i\leq n \rangle \subset \left| \cal O_{\bb P^2_T}(p^{m-1})|_\cal Y\right|
\]
and defines a map to $\Proj k[s_0^{1/p},\ldots,s_n^{1/p}][x_0^{p^{m-1}},\ldots,x_n^{p^{m-1}}]$.

So the factorisation from \autoref{main_theorem} is trivial; that is, $\EuScript{X}=\cal V$ and $\cal Y=\cal W$.

In particular if $m=1$, then for any $n$ and $p<n+2$ the generic fibre $X$ is an $n$-dimensional regular, geometrically non-reduced Fano variety over $k(s_0,\ldots,s_n)$.
\end{example}

The following is an example which illustrates that the generic fibre of $\cal X\to \cal V$ need not be geometrically reduced.

\begin{example}\label{exmp:X_K(V)_not_geom_reduced}
Let $S=\bb A^2_{k,(s,t)}$ and
\[ \EuScript{X}=(sx^{p^m}+ty^{p^m}+z^{p^m}=xu^{p^n}+yv^{p^n}+zw^{p^n}=0)\subset\bb P^2_{S,[x:y:z]}\times_S\bb P^2_{S,[u:v:w]}.
\]
If $\tau\colon T\to S$ is relative Frobenius over $k$ then $\Omega_{T/S}=\Omega_{T/k}$ and the normalised base change is
\[\cal Y=(\EuScript{X}\times_S T)_{\red}=(s^{1/p}x^{p^{m-1}}+t^{1/p}y^{p^{m-1}}+z^{p^{m-1}}=xu^{p^n}+yv^{p^n}+zw^{p^n}=0)
\]
in $\bb P^2_{T,[x:y:z]}\times_T\bb P^2_{T,[u:v:w]}$.
So $\frak F_{T/S}=0$ and the same computation as in \autoref{exmp:pm_fermat_example} shows that $\frak M_{T/S}=
\langle x^{p^{m-1}},y^{p^{m-1}}
\rangle
\subsetneq
|\cal O_{\bb P^2_T\times_T\bb P^2_T}(p^{m-1},0)|_\cal Y|$. Therefore the factorisation of \autoref{main_theorem} is
\[\xymatrix{
\cal Y \ar[r] \ar[d] & \EuScript{X} \ar[d] \\
\cal W=\Proj \frac{k[s^{1/p},t^{1/p}][x,y,z]}{(s^{1/p}x^{p^{m-1}}+t^{1/p}y^{p^{m-1}}+z^{p^{m-1}})} \ar[r] \ar[d] & \cal V = \Proj \frac{k[s,t][x,y,z]}{(sx^{p^m}+ty^{p^m}+z^{p^m})} \ar[d] \\
T \ar[r] & S}.
\]

$\EuScript{X}\times_\cal V\cal W$ is reduced, but the generic fibre $\EuScript{X}_\xi$ of $\EuScript{X}\to\cal V$ is not geometrically reduced over $K(\cal V)$; indeed
$\EuScript{X}_\xi$ is given by $(xu^p+yv^p+zw^p=0)\subset\bb P^2_{K(\cal V)}$ and becomes non-reduced upon base change to $K(\cal V)^{1/p}$.
\end{example}

By taking products of $p^m$-Fermat hypersurfaces, we obtain examples where both $\frak F$ and $\frak M$ are nonzero.

\begin{example}
Let $S=\bb A^3_{k,(r,s,t)}$ and
\[\EuScript{X}=(sx^{p^m}+ty^{p^m}+z^{p^m}= ru^{p^n}+sv^{p^n}+w^{p^n}=0)\subset\bb P^2_{S,[x:y:z]}\times_S\bb P^2_{S,[u:v:w]}.
\]
If $T=\Spec k[r,s^{1/p},t^{1/p}]\to S$ then $\Omega_{T/S}$ is freely generated by $d_{T/S} s^{1/p}$ and $d_{T/S} t^{1/p}\rangle $, and the normalised base change is
\[\cal Y=(s^{1/p}x^{p^{m-1}}+t^{1/p}y^{p^{m-1}}+z^{p^{m-1}}=r^{1/p}u^{p^{n-1}}+s^{1/p}v^{p^{n-1}}+w^{p^{n-1}}=0)
\]
inside $\bb P^2_{S^{p},[x:y:z]}\times_{S^{p}}\bb P^2_{S^{(1)},[u:v:w]}$.
One can compute on the affine open subsets $\{(x\neq 0)\cap (u\neq 0), (y\neq 0)\cap (u\neq 0), (y\neq 0)\cap (v\neq 0)\}$ of $\cal Y$ that
\begin{gather*}
\frak F_{T/S}=u^{p^{n-1}} \in \left| \cal O_{\bb P^2\times\bb P^2}(0,p^{n-1})|_\cal Y \right|, \\
\frak M_{T/S}=\langle x^{p^{m-1}},y^{p^{m-1}}\rangle \subsetneq \left| \cal O_{\bb P^2\times\bb P^2}(p^{m-1},0)|_\cal Y \right|.
\end{gather*}
Indeed, by adjunction $(p-1)(\frak M_{T/S}+\frak F_{T/S})=(p-1)\frak C_{T/S}$ is in the linear equivalence class of $\phi^*K_\EuScript{X}-K_\cal Y=\cal O_{\bb P^2\times\bb P^2}(p^m-p^{m-1},p^n-p^{n-1})|_\cal Y$.

$\frak M_{T/S}$ defines a map to $\Proj k[r,s^{1/p},t^{1/p}][x^{p^{m-1}},y^{p^{m-1}}]$ and so the factorisation of \autoref{main_theorem} is
\[\xymatrix{
\cal Y \ar[r] \ar[d] & \EuScript{X} \ar[d] \\
\cal W=\Proj \frac{k[r,s^{1/p},t^{1/p}][x,y,z]}{(s^{1/p}x^{p^{m-1}}+t^{1/p}y^{p^{m-1}}+z^{p^{m-1}})} \ar[r] \ar[d] & \cal V = \Proj \frac{k[r,s,t][x,y,z]}{(sx^{p^m}+ty^{p^m}+z^{p^m})} \ar[d] \\
T \ar[r] & S.}
\]

\end{example}

\bibliographystyle{acm}
\bibliography{references_nonred}

%\bibliographystyle{acm}
%\bibliography{library}

\end{document}